\def\re{\text{Re}}
\def\ma{\mathfrak{a}}
\def\mb{\mathfrak{b}}
\def\mc{\mathfrak{c}}
\def\mA{\mathscr{A}}
\def\mB{\mathscr{B}}
\def\mE{\mathscr{E}}
\def\mS{\mathscr{S}}
\def\mJ{\mathcal{J}}
\def\mH{\mathcal{H}}
\def\mM{\mathcal{M}}
\def\mmM{\mathscr{M}}
\def\mE{\mathcal{E}}
\def\mI{\mathcal{I}}
\def\r){\right)}
\def\B{\Bigg}
\def\d{\mathrm{d}}
\def\ssum{\mathop{\sum\nolimits^*}}
\def\ppmod{\!\!\!\!\!\pmod}
\def\ssqrt{\!\sqrt}
\let\ve=\varepsilon
\let\ol=\overline
\let\vp=\varphi
\let\wh=\widehat
\theoremstyle{definition}
\newtheorem{remark}{Remark}[section]
\theoremstyle{plain}
\newtheorem{theorem}{Theorem}
\newtheorem{corollary}[theorem]{Corollary}
\newtheorem{lemma}[theorem]{Lemma}
\numberwithin{equation}{section}
\numberwithin{theorem}{section}
\def\qed{\ifhmode\textqed\fi
   \ifmmode\ifinner\quad\qedsymbol\else\dispqed\fi\fi}
\def\textqed{\unskip\nobreak\penalty50
    \hskip2em\hbox{}\nobreak\hfil\qedsymbol
    \parfillskip=0pt \finalhyphendemerits=0}
\def\dispqed{\rlap{\qquad\qedsymbol}}
\begin{document}
\title[Mixed moments of twisted $L$-functions]{Mixed moments of twisted $L$-functions}

\author[Z. Tang] {Zhenpeng Tang}
\address{ZT: School of Mathematics, Hefei University of Technology, Hefei 230009, P.R. China}
\email{zptang@mail.hfut.edu.cn}

 \author[X. Wu]{Xiaosheng Wu}
\address{XW: School of Mathematics, Hefei University of Technology, Hefei 230009, P.R. China}
\email{xswu@amss.ac.cn}


\begin{abstract}
We establish an asymptotic formula with a power-saving error term for the twisted mixed moment of Dirichlet $L$-functions and automorphic $L$-functions twisted by all primitive characters modulo $q$, valid for all admissible moduli. As a special case, this extends the asymptotic result of Blomer, Fouvry, Kowalski, Michel, and Mili\'cevi\'c to general moduli, achieving an error term as sharp as the best bound recently proved by Khan and Zhang for prime moduli.
\end{abstract}

\keywords{asymptotic formula, moments of $L$-functions, Kloosterman sum, bilinear form}
\subjclass[2010]{11M36, 11M06, 11L05, 11F72}
\thanks{The authors are supported in part by the NSFC Grant 12271135, Anhui Provincial Natural Science Foundation Grant 2508085J005 and the Fundamental Research Funds for the Central Universities Grant JZ2025HGTG0254}
\maketitle

\section{Introduction}

\subsection{Moments of twisted $L$-functions}
The study of moments in families of $L$-functions constitutes a central problem in modern number theory. As emphasized in the elegant work of Young \cite{You11}, these moments are not only pivotal for their wide-ranging applications but also serve as fundamental objects that unveil deep structural properties and inherent symmetries within the family.

Asymptotic formulae for moments of $L$-function with a power-saving error term are particularly crucial, as they play vital roles in amplification techniques, mollification methods, and resonance techniques. The complexity of such moment calculations can be quantified by the ratio $r=\log \mathcal{C}/\log |\mathcal{F}|$, where $|\mathcal{F}|$ denotes the size of the family and $\mathcal{C}$ its analytic conductor. Notably, computational difficulty increases with growing complexity. The threshold $r=4$ is the precise boundary, where most current analytic techniques fall just short of producing an asymptotic formula. In the few successful cases, some deep input is typically indispensable; see \cite{CLMR24} \cite{CIS12} \cite{IS00} \cite{Kha12} \cite{KMV00} \cite{Li24} for example.

Let $f$ and $g$ denote two fixed (holomorphic or non-holomorphic) Hecke eigenforms of level $1$, not necessarily cuspidal. The following second moment is defined in the context $r=4$: for $q\nequiv 2\pmod4$,
\begin{align}\label{eqgmoment}
M_{f,g}(q)=\frac1{\vp^*(q)}\ssum_{\chi\ppmod q}L\left(\tfrac12,f\otimes\chi\r)\ol{L\left(\tfrac12,g\otimes\chi\r)},
\end{align}
where the sum runs over all primitive characters modulo $q$, and $\vp^*(q)$ denotes the number of such characters. Asymptotics for the moment \eqref{eqgmoment} with a power-saving error term are crucial to amplification and related analytic techniques in problems such as subconvexity, nonvanishing, and extreme values of $L$-functions; see \cite{BFKMMS23} for a rich sample of applications.

When both $f$ and $g$ correspond to non-cuspidal Eisenstein series, the moment \eqref{eqgmoment} reduces to
\[
M_{E,E}(q)=\frac1{\vp^*(q)}\ssum_{\chi\ppmod q}\left|L\left(\tfrac12,\chi\r)\right|^4,
\]
the fourth moment of Dirichlet $L$-functions, which has a very long history and has been extensively studied; see \cite{BFK+17a} \cite{HB81} \cite{Sou07} \cite{Wu23} \cite{You11} for example. The first asymptotic formula with a power-savings error term was established by Young \cite{You11} for prime moduli. Wu \cite{Wu23} proved the asymptotic formula for all admissible moduli, and the best-known error term to date is $O\left(q^{-\frac1{20}+\ve}\r)$; see \cite{BFK+17b} \cite{GWZ25}.

When both $f$ and $g$ are cuspidal (holomorphic or Maa\ss) in \eqref{eqgmoment}, some special cases (ie. $f=g$) were studied earlier by Stefanicki \cite{Ste96} and Gao, Khan, and  Ricotta \cite{GKR09}. Blomer and Mili\'cevi\'c \cite{BM15} established a power-saving asymptotic for most moduli, specifically when $q$ is not close to a prime or to a product of two primes of comparable size. The main obstacle in these cases has been the lack of power-saving estimates for bilinear forms involving Kloosterman sums in the P\'olya-Vinogradov range.
The case of prime moduli was later addressed by Kowalski, Michel, and Sawin \cite{KMS17}, who proved the asymptotic with a power-saving error term $O\left(p^{-\frac1{144}+\ve}\r)$. The remaining case was recently resolved by  Mili\'cevi\'c, Qin, and Wu \cite{MQW25} and independently by  Pascadi~\cite{Pascadi2025}. Both works \cite{MQW25} and \cite{Pascadi2025} established a power-saving asymptotic for all admissible moduli $q$, with the former also removing the dependence on the Ramanujan--Petersson conjecture and achieving a sharper error term $O\left(q^{-\frac1{216}+\ve}\r)$.

These are termed mixed moments when one eigenform is cuspidal while the other corresponds to a non-cuspidal Eisenstein series. The first power-saving asymptotic formula for this case was established by Blomer, Fouvry, Kowalski, Michel, and Mili\'cevi\'c \cite{BFK+17a}, who proved that for prime moduli $p$
\[
M_{f,E}(p)=\frac1{\vp^*(p)}\ssum_{\chi\ppmod p}L\left(\tfrac12,f\otimes\chi\r)\ol{L\left(\tfrac12,\chi\r)}^2=\frac{L(1,f)^2}{\zeta(2)}+O\left(p^{-\frac1{68}+\ve}\r).
\]
The error term was subsequently improved to $O\left(p^{-\frac1{64}+\ve}\r)$ by Shparlinski \cite{Shp19}, and more recently to $O\left(p^{-\frac1{22}+\ve}\r)$ for holomorphic $f$ and to $O\left(p^{-\frac5{152}+\ve}\r)$ for Maa\ss\ forms $f$ by Khan and Zhang \cite{KZ23}.

In this paper, we investigate the mixed moment for arbitrary moduli. More precisely, we study the following twisted mixed moment
\[
M_{f,E}(q;a,b)=\frac1{\varphi^*(q)}\ssum_{\chi\ppmod q} L\left(\tfrac12,f\otimes\chi\r)L\left(\tfrac12,\ol\chi\r)^2\chi(\ol{a}b),
\]
for any integers $a,b\le q$ satisfying $(a,b)=(ab,q)=1$.
%
%
\begin{theorem}\label{themain}
Let $f$ be a cuspidal Hecke eigenform of level $1$ with Hecke eigenvalues $\lambda(n)\ll \tau(n)n^{\theta_f}$. If $f$ is Maa\ss~form we further assume  that its root number satisfies $\ve(f)=1$. For $q\nequiv 2 \pmod4$ and $1\le a,b\le q$ satisfying $(a,b)=(ab,q)=1$, we have
\begin{align*}
M_{f,E}(q;a,b)=&c_f\prod_{p\mid qab}\left(1-\frac{\lambda(p)}{p}+\frac1{p^2}\r)\left(1-\frac1{p^2}\r)^{-2}\\
&\times\frac{c_{a,b}+c_{b,a}}{(ab)^{\frac12}}\frac{L(1,f)^2}{\zeta(2)} +O\left(q^{-\frac{1}{20}+\ve}a^{\frac{3}{10}}+q^{-\frac{1-2\theta_f}{22+16\theta_f}+\ve}b^{\frac{3+2\theta_f }{11+8\theta_f}}\r),
\end{align*}
where the coefficients $c_f$, $c_{a,b}$ are defined by
\begin{align}
c_f=
\begin{cases}
1/2,  &\text{if} \ f\ \text{is holomorphic},\\
1, &\text{if}\ f\ \text{is a Maa\ss\ form},
\end{cases}
\quad c_{a,b}=\sum_{a_1\mid a^\infty}\sum_{b_1\mid b^\infty} \frac{\lambda(aa_1b_1)\tau(ba_1b_1)}{a_1b_1}. \label{eqc1c2}
\end{align}
\end{theorem}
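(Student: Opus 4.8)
Because $L(\tfrac12,\ol\chi)^2=L(\tfrac12,E\otimes\ol\chi)$ with Hecke coefficients $\lambda_E(n)=\tau(n)$, the plan is to treat $M_{f,E}(q;a,b)$ as the degenerate ($g=E$) case of a product of two $\mathrm{GL}_2$ twisted $L$-functions and to run the usual moment machinery while carrying $a$, $b$ and the modulus $q$ explicitly through every step. First insert approximate functional equations for $L(\tfrac12,f\otimes\chi)$ and $L(\tfrac12,\ol\chi)^2$, obtaining sums of length $\ll q^{1+\ve}$ over $m$ (coefficient $\lambda(m)$) and over $n$ (coefficient $\tau(n)$), each with a principal and a dual part; the dual part of the $f$-AFE carries $\ve(f\otimes\chi)=\ve(f)\chi(-1)\tau(\chi)^2/q$ and that of the $\ol\chi^2$-AFE carries $\ol{\tau(\chi)}^2/q$. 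Then sum over primitive $\chi\pmod q$ against the twist $\chi(\ol{a}b)$: orthogonality of primitive characters collapses the $\chi$-sum to congruences of shape $mb\equiv\pm na\pmod d$, weighted by $\vp(d)\mu(q/d)$ over $d\mid q$; in the two cross combinations (principal$\times$dual and dual$\times$principal) a factor $\tau(\chi)^2$ survives and one gets incomplete Gauss/Kloosterman sums modulo $d$ instead, while in the dual$\times$dual combination the two Gauss-sum factors multiply to $|\tau(\chi)|^4/q^2=1$, restoring a clean congruence $nb\equiv ma\pmod d$.

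\textbf{Main term.} On the diagonal $mb=na$ of the principal$\times$principal term, $(a,b)=1$ forces $m=a\ell$ and $n=b\ell$; since $\sum_{d\mid q}\vp(d)\mu(q/d)=\vp^*(q)$, after dividing by $\vp^*(q)$ this contributes $\sum_\ell\lambda(a\ell)\tau(b\ell)\ell^{-1}$ times the AFE cutoff weight, and Mellin inversion together with the residue at $s=1$ of the associated Dirichlet series of Rankin--Selberg type converts it into $c_f\,\dfrac{c_{a,b}}{\sqrt{ab}}\,\dfrac{L(1,f)^2}{\zeta(2)}\prod_{p\mid qab}\bigl(1-\tfrac{\lambda(p)}{p}+\tfrac1{p^2}\bigr)\bigl(1-\tfrac1{p^2}\bigr)^{-2}$, the local factors at $p\mid qab$ and the constant $c_{a,b}$ being read off from a purely local computation. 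The diagonal $nb=ma$ of the dual$\times$dual term produces the twin contribution with $c_{b,a}$; the hypothesis $\ve(f)=1$ in the Maa\ss\ case is exactly what makes this twin term appear with a $+$ sign in the asserted shape (for holomorphic $f$ the root number is automatically compatible, which is mirrored in the value of $c_f$). Everything else is off-diagonal; any secondary main terms generated there will be shown to lie below the error threshold.

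\textbf{Off-diagonal estimates --- the crux.} Write each congruence as $mb=na+dr$ with $r\ne0$ (detected by additive characters modulo $d$), and handle the cross terms with their Gauss sums in parallel; then apply Voronoi summation to the $m$-sum for the cusp form $f$ and Voronoi/Poisson summation to the $n$-sum for $\tau$, turning each piece into a shorter dual sum weighted by complete exponential sums modulo divisors of $q$. These factor over the prime decomposition of $q$ into Kloosterman-type sums, and one uses reciprocity to shorten the modulus where advantageous together with sharp bounds for the ramified ($p^2\mid q$) local factors. The surviving objects are bilinear forms in Kloosterman sums essentially in the P\'olya--Vinogradov range --- precisely the obstruction that had confined earlier asymptotics to prime moduli --- and the decisive step is to bound them with a power saving \emph{uniformly in all admissible $q$}: I would feed them into the Deshouillers--Iwaniec spectral large-sieve inequalities for sums of Kloosterman sums (insensitive to the factorization of the modulus), supplemented by the Weil bound for the complete sums and a careful balance of the Voronoi dual lengths against $a$, $b$, $q$. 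Optimizing these trade-offs produces the two error terms: the $q^{-1/20+\ve}a^{3/10}$ term from the $\tau$/$n$-side (where the length--modulus interplay carries $a$, and the exponent $1/20$ matches the sharpest fourth-moment bound), and the $q^{-(1-2\theta_f)/(22+16\theta_f)+\ve}b^{(3+2\theta_f)/(11+8\theta_f)}$ term from the $\lambda(m)$/$m$-side (carrying $b$), with $\theta_f$ entering only via the trivial bound $\lambda(m)\ll\tau(m)m^{\theta_f}$ for the eigenvalues not captured by Kuznetsov-type averaging, so that $\theta_f=0$ recovers the Khan--Zhang exponent. This uniform-in-modulus bilinear Kloosterman estimate is the step I expect to be the main obstacle.
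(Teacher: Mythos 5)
Your opening moves match the paper: approximate functional equation, orthogonality of primitive characters twisted by $\chi(\ol a b)$ giving congruences $bm\equiv\pm an\pmod d$ over $d\mid q$, and the diagonal $bm=an$ (with $(a,b)=1$ forcing $m=a\ell$, $n=b\ell$) producing $c_{a,b}/\sqrt{ab}$ together with the swapped term $c_{b,a}$ and the Euler factors at $p\mid qab$. (Two small organisational remarks: the paper combines the two functional equations into one for the product $\Lambda(s,f\otimes\chi)\Lambda(s,\ol\chi)^2$ with root number $\ve(f,\chi)=\pm1$, so no Gauss-sum-weighted cross terms ever appear, whereas your separate AFEs create them; and $c_f=1/2$ in the holomorphic case comes from restricting to the parity $\chi(-1)=\ve(f)$, i.e.\ half the characters, which your sketch does not actually explain.)

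The genuine gap is at the step you yourself flag as ``the main obstacle'': for the off-diagonal terms you propose Voronoi/Poisson followed by the Deshouillers--Iwaniec spectral large sieve plus the Weil bound plus reciprocity, and an unspecified optimisation. That toolkit is exactly the classical one (essentially Blomer--Mili\'cevi\'c), and it is known \emph{not} to yield a power saving uniformly in all admissible moduli --- it fails precisely when $q$ is prime or a product of two comparable primes, because the resulting bilinear forms with Kloosterman sums sit in the P\'olya--Vinogradov range; this is the very obstruction the paper is built to overcome. The actual proof splits the off-diagonal range into balanced and unbalanced pieces and imports two recent inputs that your sketch does not contain: for the balanced range, the refined bounds for sums of Kloosterman sums and the sparse-support spectral large sieve of Gao--Wu--Zhao (Lemmas \ref{thKls} and \ref{thmsls}), which control the dependence on $a$, $b$ and $(ab,q)$ far better than plain DI; and for the unbalanced range, where the coprimality condition $(mn,q)=1$ obstructs applying Voronoi in both variables for composite $q$, the bilinear estimate for incomplete Kloosterman sums of Kerr--Shparlinski--Wu--Xi (Lemma \ref{lemDS}), valid for arbitrary moduli. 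Without these (or equivalent) inputs, the claimed exponents $q^{-1/20+\ve}a^{3/10}$ and $q^{-(1-2\theta_f)/(22+16\theta_f)+\ve}b^{(3+2\theta_f)/(11+8\theta_f)}$ cannot be extracted from your argument, so the proposal sketches the framework but leaves the decisive estimates unproved.
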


\begin{remark}
The dependence on  $\theta_f$ in the error term of Theorem \ref{themain} arises from the trivial estimate \eqref{eqtbB}, which constitutes the sole component requiring an upper bound for the Hecke eigenvalues of $f$. The assumption $\ve(f)=1$ for a Maa\ss~form is natural, as otherwise $M_{f,E}(q;a,b)=0$ for parity reasons (see Remark \ref{rem1}). In the Maa\ss~case, the coefficient $c_f$ addresses a literature discrepancy originating from a typographical oversight in \cite[Theorem 1.2]{BFK+17a} and was subsequently replicated in \cite[Thoerem 1.1]{KZ23}.
\end{remark}

Setting $a=b=1$ in Theorem \ref{themain} results in a power-saving asymptotic for $M_{f,E}(q)$ across all admissible moduli $q$.

\begin{corollary}\label{cor}
Let $f$ be a cuspidal Hecke eigenform of level $1$ with Hecke eigenvalues $\lambda(n)\ll \tau(n)n^{\theta_f}$. If $f$ is Maa\ss~form we further assume that its root number satisfies $\ve(f)=1$. For $q\nequiv 2 \pmod4$, we have
\begin{align*}
M_{f,E}(q)=c_f\prod_{p\mid q}\left(1-\frac{\lambda(p)}{p}+\frac1{p^2}\r)\left(1-\frac1{p^2}\r)^{-2}\frac{L(1,f)^2}{\zeta(2)}+O\left(q^{-\frac{1-2\theta_f }{22+16\theta_f }+\ve}\r),
\end{align*}
where $c_f$ is the same constant as previously defined.
\end{corollary}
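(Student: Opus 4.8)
The plan is to apply the approximate functional equations to both $L(\tfrac12,f\otimes\chi)$ and $L(\tfrac12,\ol\chi)^2$, reducing $M_{f,E}(q;a,b)$ to a weighted sum over $\chi$ of $\sum_{m,n}\lambda(m)\tau(n)\chi(\ol a b m)\ol\chi(n)(\text{smooth weight})/(mn)^{1/2}$, where $m$ ranges up to roughly $q^{1/2}$ (half the conductor of $f\otimes\chi$, which has conductor $q^2$) and $n$ ranges up to roughly $q$ (half the conductor of $\chi^2$). The first step is to execute the character orthogonality: $\frac1{\vp^*(q)}\ssum_{\chi\ppmod q}\chi(abm)\ol\chi(n)$ picks out, by Möbius inversion over the primitivity condition, the congruences $abm\equiv n\pmod{q/d}$ for $d\mid q$, producing a ``diagonal'' contribution from $abm=n$ together with the honest off-diagonal terms $abm\equiv n\pmod{q/d}$, $abm\neq n$. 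The diagonal is straightforward: writing $n=abm$ one gets a Dirichlet series in $m$ that factors as an Euler product, and after a contour shift past $s=0$ it yields the main term $c_f\prod_{p\mid qab}(\cdots)\,\frac{c_{a,b}+c_{b,a}}{(ab)^{1/2}}\,\frac{L(1,f)^2}{\zeta(2)}$; the symmetric pair $c_{a,b}+c_{b,a}$ reflects which of the two approximate-functional-equation pieces (the ``$f$-side'' versus the ``dual'' side) contributes, and the local factors at $p\mid ab$ come from the $a_1\mid a^\infty$, $b_1\mid b^\infty$ divisor conditions in the definition \eqref{eqc1c2}.

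The heart of the argument is the off-diagonal term. After the orthogonality step, one is left with sums of the shape $\sum_{d\mid q}\sum_{\substack{abm\equiv n\,(q/d)\\ abm\neq n}}\frac{\lambda(m)\tau(n)}{(mn)^{1/2}}\,V\!\left(\tfrac{m}{M}\right)W\!\left(\tfrac{n}{N}\right)$ with $MN^2\asymp q^2$ (balanced ranges $M\asymp q^{1/2}$, $N\asymp q$ away from the edges of the AFE). I would open the divisor function $\tau(n)$ to turn this into a bilinear/trilinear expression, apply Poisson summation in $n$ (the longer variable) modulo $q/d$ to convert it into a dual sum carrying a complete exponential sum — which, after evaluating the $\tau$-structure and the $\lambda(m)$ via a Voronoi-type or Mellin expansion for the $f$-twisted side, assembles into incomplete Kloosterman-type sums. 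The key analytic input is then a power-saving bound for the relevant bilinear forms with Kloosterman fractions in (or near) the Pólya–Vinogradov range, valid for \emph{all} moduli $q$; this is precisely the ingredient that, over prime moduli, rests on deep algebraic-geometry estimates (Katz, Fouvry–Kowalski–Michel) and, for general $q$, on the work of the ``general modulus'' school — one exploits the factorization of $q$, treating smooth and rough parts differently, and applies $q$-van der Corput / Cauchy–Schwarz with the spectral large sieve or the Deshouillers–Iwaniec bounds to the resulting sums of Kloosterman sums.

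The main obstacle is controlling this off-diagonal over arbitrary $q$ with a uniform power saving: for composite $q$ one loses the clean square-root cancellation available for Kloosterman sums to prime moduli, and one must instead arrange a flexible factorization $q=q_1q_2$ and balance the Poisson/Cauchy–Schwarz steps so that both the ``$q_1$ smooth'' and ``$q_2$ prime-power'' regimes yield the same exponent; the worst case is $q$ prime or a product of two primes of comparable size, exactly where the earlier works \cite{BM15} stalled and which was opened up by the recent bilinear-forms technology. I expect the final error to emerge as the maximum of two terms — one governed by the length $M\asymp q^{1/2}$ of the $f$-side (giving the $q^{-1/20+\ve}a^{3/10}$ term, dimensionally matching the fourth-moment barrier since the Eisenstein part is ``a square''), the other from the interplay of the $f$-Voronoi shift with the $\tau$-divisor sum (giving $q^{-(1-2\theta_f)/(22+16\theta_f)+\ve}b^{(3+2\theta_f)/(11+8\theta_f)}$, the $\theta_f$-dependence coming from the one place, equation \eqref{eqtbB}, where only a pointwise bound $\lambda(m)\ll\tau(m)m^{\theta_f}$ is used rather than cancellation). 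Tracking the $a,b$ dependence cleanly through the Poisson step — where $a,b$ enter the modulus of the complete sum and the length of the dual variable — requires care but is routine once the $a=b=1$ case is in place, and Corollary \ref{cor} then follows by specialization.
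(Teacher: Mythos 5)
The only step the paper itself uses to prove Corollary \ref{cor} is to set $a=b=1$ in Theorem \ref{themain}, and your closing remark that the corollary ``follows by specialization'' is exactly that; so what has to be judged is the sketch you offer in place of Theorem \ref{themain}. Its skeleton (functional equation, orthogonality, diagonal main term by a contour shift, off-diagonal via opening $\tau$ and Kloosterman-type bilinear forms for general moduli) belongs to the same family of argument as the paper's, but as written it has concrete gaps. The length bookkeeping is wrong: the paper's combined approximate functional equation \eqref{eqafe} for the product gives $mn\le q^{2+\ve}$ with each variable allowed to be as long as $q^{2}$, whereas your ``$m\lesssim q^{1/2}$, $n\lesssim q$'' and ``$MN^2\asymp q^2$ with $M\asymp q^{1/2}$, $N\asymp q$'' are mutually inconsistent and hide both the extreme lopsided regimes (where the paper uses the trivial bounds of Lemma \ref{pro1} --- the only place $\theta_f$ enters, via \eqref{eqtbB} --- and the Weil-bound estimate \eqref{eqEMNWeil}) and the true worst case $M\asymp N\asymp q$. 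You also skip the parity/root-number reduction ($\ve(f,\chi)=1$ and the split into even and odd characters) that is needed before the product functional equation can be used at all. (And when you do specialize, note that $c_{1,1}+c_{1,1}=2$, so the constant displayed in Theorem \ref{themain} at $a=b=1$ and the one in Corollary \ref{cor} differ by a factor $2$; your derivation has to land on one consistent normalization.)

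The more serious gap is the off-diagonal in the balanced (P\'olya--Vinogradov) range $M\asymp N\asymp q$. Your single pipeline --- open $\tau$, Poisson in $n$ modulo a divisor of $q$, then a bilinear bound for incomplete Kloosterman fractions valid for all moduli --- is essentially the paper's treatment of the \emph{unbalanced} terms (Lemma \ref{lemDS} from \cite{KSWX23}, after a Voronoi shortening of the $m$-sum), and the resulting bound \eqref{eqEMN+W} saves nothing when $M\asymp N\asymp q$ and $N_2$ is as small as $N^{1/2}$. In the balanced range the paper switches to a genuinely different mechanism (Section \ref{secbt}, Theorems \ref{thmAq} and \ref{thmAMN}): additive characters and Voronoi on the $\lambda$-side produce sums of $S(hq\ol{b},\pm m;a_1k)$, which are then handled by the Kuznetsov formula at the cusp $1/a_1$ of $\Gamma_0(a_1b)$ together with the spectral large sieve, and crucially with the refined inputs of \cite{GWZ25} (Lemmas \ref{thKls} and \ref{thmsls}) --- the latter precisely because the standard Deshouillers--Iwaniec large sieve applied to coefficients supported on the multiples $hq$ loses a factor of roughly $q$. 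Your passing mention of ``spectral large sieve or Deshouillers--Iwaniec'' does not identify this regime split, which tool is used where, or how that loss at $hq$ is avoided; consequently the exponents you quote, $q^{-1/20}a^{3/10}$ and $q^{-(1-2\theta_f)/(22+16\theta_f)}b^{(3+2\theta_f)/(11+8\theta_f)}$, which in the paper come out of optimizing \eqref{eqtbB}, \eqref{eqbA}, \eqref{eqEMNWeil} and \eqref{eqEMN+W}, are asserted rather than derived.
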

For the holomorphic case, Deligne's well-known work \cite{Del74} gives $\theta_f=0$, whereas for the Maa\ss~form we may take $\theta_f= 7/{64}$ due to Kim--Sarnak \cite{Kim03}. This yields an error term as sharp as the best bound \cite{KZ23} known for prime moduli.


Through the functional equations of $L$-functions and the orthogonality of characters, the evaluation of the moment $M_{f,E}(q,a,b)$ reduces to analyzing the shifted convolution sum
\begin{align}\label{eq+1}
\sum_{\substack{bm\equiv \pm an\ppmod d\\(mn,q)=1}}\lambda(m) \tau(n)W\left(\frac mM\right)W\left(\frac {n}{N}\right),
\end{align}
where $a, b,M,N\ge1$ and $W$ is smooth test functions. The classical approach to such shifted convolution sum involves either the $\delta$-method or Jutila's circle method to eliminate the congruence condition (see \cite{BM15}). For \eqref{eq+1}, a more efficient approach is to expand the divisor function using the multiplicative decomposition $\tau(n)=\sum_{n_1n_2=n}1$, which decomposes the $n$-sum into two sums with smooth coefficients, making them amenable to Fourier-analytic techniques.

The diagonal terms (where $bm=an$) in \eqref{eq+1} contribute to the main term, which can be evaluated explicitly using standard techniques. The primary challenge arise from the off-diagonal terms, which we classify into two distinct cases based on the relative magnitudes of $bM$ and $aN$: the balanced case and the unbalanced case. These two cases demand fundamentally different approaches.

In the balanced case where $bM$ and $aN$ are of comparable size, we first remove the coprimality condition and reduce the problem to estimating the sum
\begin{align}\label{eqAdef}
A_q(a,b,M,N)=\sum_{\substack{bm\equiv \pm an \ppmod q\\ bm\neq an}}\lambda(m) \tau(n) W\left(\frac {bm}M\r) W\left(\frac {an}N\r),
\end{align}
where $W$ are some smooth functions.
For $M\ge N$, we establish the bound
\[
A_q(a,b,M,N)\ll  q^\ve\B(\frac{M}{q^{\frac12}}+\frac{(ab,q)^{\frac14}M^{\frac54}N^{\frac14}}{(ab)^{\frac14}q} +\frac{M^{\frac34}N^{\frac14}}{(ab)^{\frac14}q^{\frac14}} +\frac{(ab,q)^{\frac14}MN^{\frac12}}{(ab)^{\frac12}q^{\frac34}}\B)
\]
with an analogous result holding for $N\ge M$. Compared to the bound in \cite[(8.13)]{BM15}, our result achieves two key improvements: reducing the influence of the factor $(ab,q)^{\frac12}$ and recovering a saving $(ab)^{-\frac14}$ in the third term. This improvements stem from two innovation: the elimination of dependence on the Ramanujan--Petersson conjecture and the minimization of the impact of $a, b$ through new upper bounds for sums involving Kloosterman sums, as established in the recent work of Gao, Wu, and Zhao \cite{GWZ25}.

%

For the unbalanced case where $BM$ and $AN$ differ significantly in size, the theory of algebraic trace functions becomes inapplicable due to its restriction to prime moduli. To address this, we employ the multiplicative decomposition of the divisor function $\tau(n)$, which allows us to reformulate the problem as analyzing a bilinear sum involving incomplete Kloosterman sum. The estimate of this bilinear form relies primarily on a recent result by Kerr, Shparlinski, Wu, and Xi \cite{KSWX23} concerning arbitrary moduli.
The approach also requires the Weil bound for Kloosterman sums and several Fourier-analytic tools, including the Poisson summation formula and the Voronoi summation formula.

\subsection{Notation}
We adopt the standard convention that $\ve$ denotes an arbitrarily small positive constant whose values may vary from one occurrence to another. All implied constants in our estimates may depend on $\ve$ as well as on the fixed cusp form $f$.
Throughout this paper, $W(x)$ denotes a smooth complex-valued function with compact support in $[1/2,3]$, satisfying
\begin{align}\label{eqW}
W^{(j)}(x)\ll_{j}1,
\end{align}
for all integers $j\ge0$. The function $W(x)$ may also vary at each occurrence. We call a modulus $q$ admissible if $q\nequiv 2\pmod4$, which is a natural assumption since no primitive characters exist modulo $q$ otherwise.

\section{Background and auxiliary lemmas}

\subsection{Dirichlet $L$-functions}
Let $\chi$ be a primitive Dirichlet character modulo $q$. For $\re(s)>1$, its associated Dirichlet $L$-function is defined by
\[
L(s,\chi):=\sum_{n}\frac{\chi(n)}{n^{s}}=\prod_{p}\left(1-\frac{\chi(p)}{p^s}\r)^{-1}.
\]
We denote the completed $L$-function by
\[
\Lambda(s,\chi)=q^{-\frac s2}L_\infty(s,\chi)L(s,\chi),
\]
where
\begin{align*}
L_{\infty}(s,\chi)=\pi^{-\frac s2}\Gamma\left(\frac{s+\ma}2\r),\ \ \ \ \ma=\begin{cases}
0,  &\text{if}\ \chi(-1)=1,\\
1, &\text{if}\ \chi(-1)=-1.
\end{cases}
\end{align*}
Through analytic continuation to $\mathbb{C}$, this completed $L$-function satisfies the functional equation:
\[
\Lambda(s,\chi)=\ve(\chi)\Lambda(1-s,\ol{\chi})
\]
with
\[
\ve(\chi)=i^{-\ma}\ve_\chi, \ \ \ \ \ \ve_\chi=\frac{1}{\ssqrt{q}}\sum_{x\ppmod q}\chi(x)e\left(\frac xq\r).
\]

\subsection{Automorphic preliminaries}
We outline some foundamental facts about automorphic forms, mainly following \cite{BM15}, \cite{DI82}, and \cite{GWZ25}.
Let $\mb$ be a cusp of $\Gamma_0(Q)$ with scaling matrix $\sigma_{\mb}$. For a holomorphic modular form $f$ of level $Q$ and weight $k$, its Fourier expansion around $\mb$ takes the form
\[
f(\sigma_{\mb}z)=\sum_{n\ge1}\rho_f(\mb,n)(4\pi n)^{\frac k2}e(nz).
\]
For a Maa\ss\  form $f$ with spectral parameter $t$, the expansion becomes
\[
f(\sigma_{\mb}z)=\sum_{n\neq0}\rho_f(\mb,n)W_{0,it}(4\pi|n|y)e(nx),
\]
where $W_{0,it}(y)=(y/\pi)^{\frac12}K_{it}(y/2)$ is a Whittaker function.
Each cusp $\mc$ of $\Gamma_0(Q)$ has an Eisenstein series $E_{\mc}(\sigma_\mb z,s)$ with Fourier expansion at $s=\frac12+it$:
\[
E_{\mc}(\sigma_\mb z, \tfrac12+it)=\delta_{\mb,\mc}y^{\frac12+it}+\varphi_{\mb,\mc}(\tfrac12+it)y^{\frac12-it} +\sum_{n\neq0}\rho_{\mb,\mc}(n,t)W_{0,it}(4\pi|n|y)e(nx).
\]
When $\mb=\infty$, we simplify notation by writing $\rho_f(n)$ and $\rho_{\mc}(n,t)$ for the Fourier coefficients.

For a cuspidal newform $f$ with normalized Hecke eigenvalues $\lambda(n)$, the scaling relation
\[
\lambda(n)\rho_f(1)=\sqrt{n}\rho_f(n)
\]
and the multiplicative formula
\begin{align}\label{eqlambdam}
\lambda(mn)=\sum_{d\mid(m,n)}\mu(d)\chi_0(d)\lambda\left(\frac md\r)\lambda\left(\frac nd\r), \quad m, n\in \mathbb{N}
\end{align}
hold, where $\chi_0$ is the trivial character modulo $Q$.
The Ramanujan--Petersson conjecture states that
\[
|\lambda(n)|\le \tau(n).
\]
Deligne \cite{Del74} proved this for holomorphic newforms, whereas for Maa\ss\  newform, Kim--Sarnak \cite{Kim03} established the weaker bound
\[
|\lambda(n)|\le \tau(n)n^{\theta} \ \ \ \text{with}\ \ \ \theta =\tfrac 7{64}.
\]
The Ramanujan-Petersson conjecture holds on average:
\begin{align}\label{eqal}
\sum_{n\le x}|\lambda(n)|^2\ll_f x.
\end{align}

For non-newforms, the exact multiplicativity of their Fourier coefficients in \eqref{eqlambdam} no longer holds, but effective averaged substitutes are available, particularly for specially constructed bases. As in \cite[Section 11]{GWZ25}, we denote by $\mathcal{B}_k(Q)$ the special bases for the space of holomorphic cusp forms of level $Q$ and weight $k$, and by $\mathcal{B}(Q)$ the bases for the space of Maa\ss~cusp forms of level $Q$. The explicit construction of these bases is detailed in a work by Blomer, Harcos, and Michel \cite[Section 3.1]{BHM07}.

\subsection{Twisted $L$-functions}
We review some basic facts on twisted $L$-functions in this section; for further details, see \cite[Section 2.3]{BFK+17a}.
Let $\chi$ be a primitive Dirichlet character modulo $q$ and let $f$ be a cuspidal Hecke eigenform with Hecke eigenvalues $\lambda(n)$. The twisted form $f\otimes\chi$ remains cuspidal for the congruence subgroup $\Gamma_0(q^2)$ with nebentypus $\chi^2$ (see e.g \cite[Propositions 14.19 \& 14.20]{IK04}). Its $L$-function is given by
\[
L(s,f\otimes\chi)=\sum_{n\ge1}\frac{\lambda(n)\chi(n)}{n^{s}}=\prod_p\left(1-\frac{\lambda(p)\chi(p)}{p^s}+\frac{\chi^2(p)}{p^{2s}}\r)^{-1},\ \ \text{for}\ \ \re(s)>1,
\]
where the product is over primes $p$.
The completed $L$-function is defined by
\begin{align*}
\Lambda(s,f\otimes\chi)=q^s L_\infty(s,f\otimes\chi)L(s,f\otimes\chi),
\end{align*}
with
\begin{align*}
L_\infty(s,f\otimes\chi)=\begin{cases}
(2\pi)^{-\frac{k-1}2-s}\Gamma\left(\frac{k-1}2+s\r),  &\text{if} \ f\ \text{is holomorphic of weight}\ k,\\
\pi^{-s-\ma}\Gamma\left(\frac{s+i\kappa+\ma}2\r)\Gamma\left(\frac{s-i\kappa+\ma}2\r), &\text{if}\ f\ \text{is a Maa\ss\ form with eigenvalue}\ \tfrac14+\kappa^2.
\end{cases}
\end{align*}
This completed $L$-function admits analytic continuation to $\mathbb{C}$ and satisfies the functional equation (see e.g \cite[Theorem 14.17, Proposition 14.20]{IK04}):
\[
\Lambda(s,f\otimes\chi)=\ve(f\otimes\chi)\Lambda(1-s,f\otimes\ol{\chi}),
\]
where the root number is given by
\begin{align*}
\ve(f\otimes\chi)=\begin{cases}
\ve(f)\ve_\chi^2,  &\text{if} \ f\ \text{is holomorphic},\\
\chi(-1)\ve(f)\ve_\chi^2, &\text{if}\ f\ \text{is a Maa\ss\ form},
\end{cases}
\end{align*}
and $\ve(f)=\pm 1$ denotes the root number of $L(s,f)$.

\subsection{Approximate functional equation}
Combining the functional equations of both $L(s,\chi)$ and $L(s,f\otimes\chi)$ results in the following functional equation:
\begin{align}\label{eqfe}
\Lambda(s,f\otimes\chi)\Lambda(s,\ol{\chi})^2=\ve(f,\chi)\Lambda(1-s,f\otimes\ol\chi)\Lambda(1-s,\chi)^2,
\end{align}
where
\begin{align*}
\ve(f,\chi)=\begin{cases}
\chi(-1)\ve(f),  &\text{if} \ f\ \text{is holomorphic},\\
\ve(f), &\text{if}\ f\ \text{is a Maa\ss\ form}.
\end{cases}
\end{align*}
\begin{remark}\label{rem1}
Observe that $\ve(f,\chi)$ is independent of $\chi$ when $f$ is a Maa\ss~form.
When $\ve(f,\chi)=\ve(f)=-1$, summing both sides of equation \eqref{eqfe} over all primitive characters leads to the vanishing of $M_{f,E}(q;a,b)$ by symmetry. For holomorphic $f$, the root number $\ve(f,\chi)$ depends on $\chi$ at most though its parity $\chi(-1)$. The symmetry also implies that the total contribution of such characters $\chi$ satisfying $\ve(f,\chi)=-1$ to $M_{f,E}(q;a,b)$ is zero. Therefore, we restrict our consideration to the following cases:
\begin{itemize}
  \item for a Maa\ss~form $f$, we take $\ve(f)=1$,
  \item for a holomorphic $f$, we consider only characters with $\chi(-1)=\ve(f)$.
\end{itemize}
In both cases, it holds that
\[
\ve(f,\chi)=1,
\]
which we assume henceforth.
\end{remark}

Given that $\ve(f,\chi)=1$, standard techniques \cite[Theorem 5.3]{IK04} yield
the following approximate functional equation (see also \cite[(2.6)]{BFK+17a}):
\begin{align}\label{eqafe}
L\left(\tfrac12,f\otimes\chi\r)L\left(\tfrac12,\ol\chi\r)^2=\sum_{m,n\ge1}&\frac{\lambda(m) \tau(n)}{(mn)^{\frac12}}\chi(m)\ol{\chi}(n)V_{f,\ma}\left(\frac{mn}{q^2}\r)\\
&+\sum_{m,n\ge1}\frac{\lambda(n) \tau(m)}{(mn)^{\frac12}}\chi(m)\ol{\chi}(n)V_{f,\ma}\left(\frac{mn}{q^2}\r),\notag
\end{align}
where the weight function takes the form
\begin{align}\label{eqV}
V_{f,\ma}(x)=\frac1{2\pi i}\int_{(2)}\frac{L_\infty\left(\frac12+s,f\otimes\chi\r)L_\infty\left(\frac12+s,\ol{\chi}\r)^2} {L_\infty\left(\frac12,f\otimes\chi\r)L_\infty\left(\frac12,\ol{\chi}\r)^2} x^{-s}\frac{\d s}s,
\end{align}
which depends on $\chi(-1)$ only through its parity and is otherwise independent of $\chi$. The function $V_{f,\ma}(x)$ is smooth, decays rapidly for $x\gg q^\ve$ and is approximately $1$ for small $x$.

\subsection{The orthogonality formula}
The orthogonality relation for averages over primitive Dirichlet characters is given by the following lemma:
\begin{lemma}
If $(mn,q)=1$, then for $\sigma\in \{-1,1\}$
\begin{align}\label{eqorth}
\ssum_{\substack{\chi\ppmod q\\ \chi(-1)=\sigma}}\chi(m)\ol{\chi}(n)=\frac12\B(\sum_{d\mid(q,m-n)}\vp(d)\mu\left(\frac qd\r)+\sigma \sum_{d\mid(q,m+n)}\vp(d)\mu\left(\frac qd\r)\B).
\end{align}
\end{lemma}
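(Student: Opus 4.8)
The plan is to pass from the starred sum over primitive characters to a sum over \emph{all} characters modulo divisors of $q$ by Möbius inversion on the conductor, to evaluate the resulting full character sums by classical orthogonality, and finally to detach the parity constraint using the projector $\mathbf{1}_{\chi(-1)=\sigma}=\frac12\bigl(1+\sigma\chi(-1)\bigr)$.

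First I would record that for any $d\mid q$, since $(mn,q)=1$ forces $(mn,d)=1$, orthogonality of all characters modulo $d$ gives $\sum_{\chi\ppmod d}\chi(m)\ol\chi(n)=\vp(d)$ when $m\equiv n\ppmod d$ and $0$ otherwise. Next I would use that every character $\chi$ modulo $q$ is induced by a unique primitive character $\chi^\ast$ of conductor $d\mid q$, with $\chi(k)=\chi^\ast(k)$ whenever $(k,q)=1$; in particular $\chi(m)\ol\chi(n)=\chi^\ast(m)\ol{\chi^\ast}(n)$ throughout the range $(mn,q)=1$. Grouping the characters modulo $q$ by their conductor then gives
\[
\sum_{\chi\ppmod q}\chi(m)\ol\chi(n)=\sum_{d\mid q}\ssum_{\psi\ppmod d}\psi(m)\ol\psi(n),
\]
and Möbius inversion combined with the orthogonality recalled above yields
\[
\ssum_{\chi\ppmod q}\chi(m)\ol\chi(n)=\sum_{d\mid q}\mu\!\left(\tfrac qd\r)\sum_{\chi\ppmod d}\chi(m)\ol\chi(n)=\sum_{d\mid(q,m-n)}\vp(d)\mu\!\left(\tfrac qd\r).
\]
Replacing $m$ by $-m$ (legitimate since $(-mn,q)=1$) gives $\ssum_{\chi\ppmod q}\chi(-m)\ol\chi(n)=\sum_{d\mid(q,m+n)}\vp(d)\mu(q/d)$.

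To finish, I would insert $\mathbf{1}_{\chi(-1)=\sigma}=\frac12(1+\sigma\chi(-1))$ into the left-hand side of \eqref{eqorth} and use $\chi(-1)\chi(m)\ol\chi(n)=\chi(-m)\ol\chi(n)$ to write
\[
\ssum_{\substack{\chi\ppmod q\\ \chi(-1)=\sigma}}\chi(m)\ol\chi(n)=\frac12\ssum_{\chi\ppmod q}\chi(m)\ol\chi(n)+\frac\sigma2\ssum_{\chi\ppmod q}\chi(-m)\ol\chi(n),
\]
and then substitute the two evaluations above to recover exactly \eqref{eqorth}. There is no genuine obstacle here; the only step I would treat with care is the conductor decomposition — one must invoke that a character agrees with its inducing primitive character on all integers coprime to $q$, so that the summand really is a function of the primitive character alone, which is precisely what legitimizes the Möbius inversion throughout the range $(mn,q)=1$. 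Everything else is routine arithmetic bookkeeping.
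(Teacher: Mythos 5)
Your proof is correct, and it is precisely the standard argument (conductor decomposition of characters modulo $q$, M\"obius inversion, orthogonality for full character sums, and the parity projector $\tfrac12(1+\sigma\chi(-1))$) that the paper itself does not spell out but delegates to the references \cite{HB81} and \cite{Sou07}. No gaps: the one point you flag — that a character agrees with its inducing primitive character on integers coprime to $q$, which justifies the grouping by conductor — is handled correctly.
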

\begin{proof}
This identity follows from standard techniques. For detailed proofs, we refer to \cite{HB81} and \cite{Sou07}.
\end{proof}
\subsection{Averages of Hecke eigenvalues with coprimality}
Let $\lambda(n)$ denote the Hecke eigenvalues of a cuspidal Hecke eigenform $f$ (holomorphic or Maa\ss\ ) and define the arithmetic functions
\[
\varpi_{\lambda}(\delta,q)=\sum_{\substack{kl^2=\delta\\ kl\mid q}}\mu(l)\mu(kl)\lambda(k), \ \ \ \ \ \varpi_{\tau}(\delta,q)=\sum_{\substack{kl^2=\delta\\ kl\mid q}}\mu(l)\mu(kl)\tau(k).
\]
These functions satisfy the uniform bounds
\begin{align}\label{eqvp}
\varpi_{\lambda}(\delta,q)\ll q^\ve(\delta,q)^{\theta_f }, \ \ \ \ \ \ \varpi_{\tau}(\delta,q)\ll q^\ve.
\end{align}

\begin{lemma}\label{lemtl}
For any arithmetic function $F(n)$,
we have
\[
\sum_{(n,q)=1}\lambda(n)F(n)=\sum_{\delta}\varpi_{\lambda}(\delta,q)\sum_n\lambda(n)F(\delta n),
\]
where the sum over $\delta$ contains $\ll q^\ve$ terms.
The same identity holds when replacing $\lambda(n)$ with the divisor function $\tau(n)$.
\end{lemma}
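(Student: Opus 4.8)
The plan is to prove Lemma \ref{lemtl} by exploiting the standard M\"obius--sieve trick for removing the coprimality condition, combined with the multiplicativity properties recorded in \eqref{eqlambdam}. First I would write
\[
\sum_{(n,q)=1}\lambda(n)F(n)=\sum_{e\mid q^\infty}\mu^\flat(e)\sum_{e\mid n}\lambda(n)F(n),
\]
but since $\lambda$ is only quasi-multiplicative along divisors of $q$, it is cleaner to detect $(n,q)=1$ via the identity $\sum_{d\mid(n,q)}\mu(d)$ and then to use \eqref{eqlambdam} to factor $\lambda(n)$ on the sub-progression $d\mid n$. Writing $n=dn'$ and invoking \eqref{eqlambdam} with $m=d$, one gets a double sum over $d\mid q$ and over divisors $d_1\mid(d,n')$; after collecting terms this rearranges into a sum over $\delta$ of the shape $\sum_\delta\varpi_\lambda(\delta,q)\sum_n\lambda(n)F(\delta n)$, where $\delta$ ranges over integers of the form $kl^2$ with $kl\mid q$ (the square $l^2$ arising from the $\mu(d)\mu(kl)$ cross terms in the double M\"obius inversion). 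This is exactly the function $\varpi_\lambda(\delta,q)$ defined just before the lemma.

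The key steps, in order, are: (i) expand $\mathbf{1}_{(n,q)=1}=\sum_{d\mid(n,q)}\mu(d)$ and substitute $n=dm$; (ii) apply the Hecke multiplicativity \eqref{eqlambdam} to split $\lambda(dm)$ as a sum over $d_1\mid(d,m)$ of $\mu(d_1)\lambda(d/d_1)\lambda(m/d_1)$ (note $(d,q)=d$ so the trivial nebentypus $\chi_0(d_1)=1$ here); (iii) set $m=d_1 n$ so the inner variable becomes $n$ and the weight of $\lambda(n)$ is $F(dd_1 n)=F(\delta n)$ with $\delta=dd_1$; (iv) collect the arithmetic coefficient, writing $d=d_1 k$ (so $\delta=d_1^2 k$, i.e. $l=d_1$, and $kl=d\mid q$ forces the divisibility constraint), which produces precisely $\sum_{kl^2=\delta,\,kl\mid q}\mu(l)\mu(kl)\lambda(k)=\varpi_\lambda(\delta,q)$; (v) observe that the constraint $kl\mid q$ limits $\delta$ to at most $\tau(q)^2\ll q^\ve$ values, giving the claimed bound on the number of terms. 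The divisor-function case is identical, using the obvious analogue of \eqref{eqlambdam} for $\tau$, namely $\tau(dm)=\sum_{d_1\mid(d,m)}\mu(d_1)\tau(d/d_1)\tau(m/d_1)$, which also holds since $\tau$ is multiplicative and $\tau=1*1$.

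The one genuinely delicate point, and the place I would be most careful, is verifying that the M\"obius coefficient that drops out after steps (ii)--(iv) is literally $\mu(l)\mu(kl)$ with $\delta=kl^2$, rather than some other combination: the factor $\mu(d)$ from the coprimality sieve splits as $\mu(d_1 k)=\mu(d_1)\mu(k)$ when $(d_1,k)=1$ and vanishes otherwise, while the factor $\mu(d_1)$ from \eqref{eqlambdam} contributes a second copy of $\mu(d_1)=\mu(l)$; since $\mu(d_1)^2=\mathbf{1}_{d_1\text{ squarefree}}$ this is absorbed into the squarefreeness conditions implicit in $\mu(kl)\neq0$, and one is left with $\mu(l)\cdot\mu(k)=\mu(l)\mu(k)$, which equals $\mu(l)\mu(kl)$ precisely because $(k,l)=1$ on the support. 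Keeping track of these squarefreeness and coprimality constraints so that they match the definition of $\varpi_\lambda$ exactly is the main bookkeeping obstacle; everything else is a routine interchange of finite sums.
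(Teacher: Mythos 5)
Your argument is correct and follows essentially the same route as the paper's proof: detect $(n,q)=1$ by M\"obius, apply the multiplicativity relation \eqref{eqlambdam} (and its analogue for $\tau$), substitute $m=d_1n$, and read off the coefficient $\mu(d)\mu(d_1)\lambda(d/d_1)=\mu(l)\mu(kl)\lambda(k)$ with $\delta=kl^2$ and $kl=d\mid q$, which is verbatim the definition of $\varpi_\lambda(\delta,q)$; the count $\ll q^\ve$ of admissible $\delta$ follows from $kl\mid q$ as you say. One caveat: your closing ``delicate point'' paragraph is both unnecessary and incorrect as written --- the claimed identity $\mu(l)\mu(k)=\mu(l)\mu(kl)$ is false in general (for $(k,l)=1$ one has $\mu(l)\mu(kl)=\mu(l)^2\mu(k)$, not $\mu(l)\mu(k)$), but no such simplification is needed because the coefficient obtained in your step (iv) already matches $\varpi_\lambda$ exactly; likewise $\chi_0(d_1)=1$ simply because $f$ has level $1$, not because $(d,q)=d$.
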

\begin{proof}
Applying M\"{o}bius inversion and the multiplicativity relations \eqref{eqlambdam}, we obtain
\begin{align*}
\sum_{(n,q)=1}\lambda(n)F(n)&=\sum_{d\mid q}\mu(d)\sum_n\sum_{l\mid (d,n)}\mu(l)\lambda\left(\frac dl\r)\lambda\left(\frac nl\r)F(dn)\\
&=\sum_{\delta}\varpi_{\lambda}(\delta,q)\sum_n\lambda(n)F(\delta n)
\end{align*}
by setting $\delta=dl$. The identical argument applies to $\tau(n)$.
\end{proof}

We now present a refined Voronoi formula incorporating a coprimality constraint.
\begin{lemma}[Voronoi formula]\label{lemcV}
Let $b\in \mathbb{Z}$ and $d,q\in\mathbb{N}$ with $(b,d)=1$, and let $V$ be a smooth function compactly supported in $(0,\infty)$.  Then
\begin{align}
\sum_{(n,q)=1}\lambda(n)V(n)e\left(\frac{bn}d\r)=\sum_{\pm}\sum_{\delta}\frac{\varpi_{\lambda}(\delta,q)}{\delta d'} \sum_{n\ge1}\lambda(n)\mathring{V}_{\pm}\left(\frac {n}{\delta d'^2}\r)e \B(\pm\frac{\ol{\delta'b}n}{d'}\B),
\end{align}
where $\delta'=\delta/{(\delta,d)}$ and $d'=d/{(\delta,d)}$.
The Hankel-type transforms $\mathring{V}_\pm:(0,\infty)\rightarrow\mathbb{C}$ are defined by
\[
\mathring{V}_\pm(y)=\int_0^{\infty}V(x)\mJ_\pm\left(4\pi\ssqrt{xy}\r)\d x
\]
with
\begin{align}\label{eqJ1}
\mJ_{+}(x)=2\pi i^{k}J_{k-1}(x), \ \ \ \ \mJ_{-}(x)=0
\end{align}
when $f$ is a holomorphic form of weight $k$, and
\begin{align}\label{eqJ2}
\mJ_+(x)=\frac{\pi i}{\sinh(\pi t)}(J_{2i\kappa}(x)-J_{-2i\kappa}(x)),\quad \mJ_-(x)=4\cosh(\pi \kappa)K_{2i\kappa}(x)
\end{align}
when $f$ is a non-holomorphic form with spectral parameter $\kappa$.
\end{lemma}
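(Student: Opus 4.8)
\medskip
\noindent\textbf{Proof idea.}
The plan is to reduce the claim to the classical Voronoi summation formula for the level-$1$ eigenform $f$, in three moves: peel off the coprimality constraint $(n,q)=1$, reduce the additive twist to one with a primitive denominator, and then apply Voronoi while tracking how the weight rescales under the Hankel transform.

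First I would apply Lemma~\ref{lemtl} with the arithmetic function $F(n)=V(n)e(bn/d)$. This rewrites the left-hand side as
\[
\sum_{\delta}\varpi_{\lambda}(\delta,q)\sum_{n}\lambda(n)\,V(\delta n)\,e\!\left(\frac{b\delta n}{d}\right),
\]
a sum over $\ll q^{\varepsilon}$ values of $\delta$, so it suffices to handle each inner sum separately. For a fixed $\delta$, set $e=(\delta,d)$, $\delta'=\delta/e$ and $d'=d/e$; then $b\delta n/d=b\delta' n/d'$, with $(\delta',d')=1$, and since $(b,d)=1$ also $(b\delta',d')=1$. Thus the inner sum carries the reduced additive character $e(b\delta' n/d')$, whose denominator is coprime to its numerator --- precisely the input needed for Voronoi.

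Next I would invoke the Voronoi summation formula for $f$ (standard in level $1$; see e.g.\ \cite{IK04}) with weight $x\mapsto V(\delta x)$ and twist $e(b\delta' n/d')$, obtaining
\[
\sum_{n}\lambda(n)V(\delta n)e\!\left(\frac{b\delta' n}{d'}\right)=\frac{1}{d'}\sum_{\pm}\sum_{n\ge1}\lambda(n)\,e\!\left(\pm\frac{\overline{b\delta'}\,n}{d'}\right)\int_{0}^{\infty}V(\delta x)\,\mJ_{\pm}\!\left(4\pi\ssqrt{xn/{d'}^{2}}\,\right)\d x,
\]
with $\mJ_{\pm}$ as in \eqref{eqJ1}--\eqref{eqJ2}. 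The change of variables $x\mapsto x/\delta$ converts the integral into $\delta^{-1}\mathring{V}_{\pm}\bigl(n/(\delta {d'}^{2})\bigr)$. Plugging this back, restoring the factor $\varpi_{\lambda}(\delta,q)$, summing over $\delta$, and noting $\overline{b\delta'}\equiv\overline{\delta'b}\pmod{d'}$ yields exactly the asserted identity; absolute convergence of the dual sum follows from the rapid decay of $\mathring{V}_{\pm}$.

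The only genuinely delicate point --- the closest thing to an obstacle --- is the bookkeeping forced by the common factor $(\delta,d)$: Voronoi cannot be applied directly with modulus $d$ and numerator $b\delta$ when these are not coprime, so one must pass to $d'=d/(\delta,d)$ and $\delta'=\delta/(\delta,d)$, and it is exactly this reduction that produces the ${d'}^{2}$ in the argument of $\mathring{V}_{\pm}$ and the Bezout inverse $\overline{\delta'b}$ in the dual phase. The rescaling of the Hankel transform and the interchange of the finite $\delta$-sum with the Voronoi step are then routine.
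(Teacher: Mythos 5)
Your proposal is correct and follows essentially the same route as the paper: apply Lemma~\ref{lemtl} with $F(n)=V(n)e(bn/d)$, cancel the common factor $(\delta,d)$ to reduce the twist to $e(\delta' b n/d')$ with $(\delta' b,d')=1$, and then apply the classical level-$1$ Voronoi formula to the $n$-sum, with the rescaling $x\mapsto x/\delta$ producing the factor $1/(\delta d')$ and the argument $n/(\delta d'^{2})$. Your write-up merely spells out the bookkeeping that the paper's brief proof leaves implicit.
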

\begin{proof}
By Lemma \ref{lemtl}, we write
\begin{align*}
\sum_{ (n,q)=1}\lambda(n)V(n)e\left(\frac{bn}{d}\r)=\sum_{\delta}\varpi_{\lambda}(\delta,q)\sum_{n\ge1}\lambda(n)V(\delta n)e\left(\frac{\delta bn}{d}\r).
\end{align*}
Canceling the common factor $(ab,d)$ in the exponential and applying the Voronoi summation formula (refer to \cite[Lemma 2.3]{BFK+17a}) to the $n$-sum yields the result.
\end{proof}

\begin{remark}\label{remark2}
Since the Selberg eigenvalue conjecture has been proved for the whole modulus group, the spectral parameter $\kappa$ in \eqref{eqJ2} is real when $f$ is a cuspidal Hecke eigenforms of level $1$.
\end{remark}

\subsection{Kuznetsov formula and spectral large sieve}\label{secKlo}
If $Q=uv$ with $(u,v)=1$, then $\mb=1/u$ is a cusp of the Hecke congruence group $\Gamma_0(Q)$,
and its associated Kloosterman sum $S_{\infty,\mb}(m,n;\gammaup)$, with scaling matrix $\sigma_{\mb}=\begin{pmatrix}
     \ssqrt{v} & 0 \\
     u\ssqrt{v} & \frac1{\ssqrt{v}}
\end{pmatrix}$, is defined  precisely when $\gammaup=u\ssqrt{v}w$ for some integer $w$ coprime to $v$. In particularly, it holds
\begin{align}\label{eqSS}
S_{\infty,\mb}(m,n;\gammaup)=e\left(n\frac{\ol{u}}{v}\right)S(m\ol{v},n;uw),
\end{align}
where $\ol{u}$ satisfies $u\ol{u}\equiv1 \pmod{v}$ and $\ol{v}$ satisfies $v\ol{v}=1 \pmod {uw}$
(see \cite[formula (1.6)]{DI82}).

We state the Kuznetsov trace formula from \cite[Theorems 9.4, 9.5, 9.7]{Iwa95}:
\begin{lemma}[Kuznetsov formula]\label{lemKf}
Let $\phi(x)$ be a function on $[0,\infty)$, satisfying $\phi(0)=0$ and $\phi^{(j)}(x)\ll(1+x)^{-2-\ve}$ for $j=0,1,2$. Let $Q=uv$ with $(u,v)=1$, and denote by $\mb=1/u$, a cusp of $\Gamma_0(Q)$. For any positive integers $m, n$,  we have
\begin{align}
\sum_\gamma\frac1{\gammaup}S_{\infty,\mb}(m,n;\gammaup)\phi\left(\frac{4\pi\ssqrt{mn}}{\gammaup}\right)=&\sum_{2\le k\equiv0(\bmod 2)}\sum_{f\in \mathcal{B}_k(Q)}\Gamma(k)\tilde{\phi}(k)\ssqrt{mn}~\ol{\rho}_f(m)~\rho_f(\mb,n)\notag\\
&+\sum_{f\in\mathcal{B}(Q)}\hat{\phi}(\kappa_f)\frac{\ssqrt{mn}}{\cosh(\pi\kappa_f)}~\ol{\rho}_f(m)~\rho_f(\mb,n)\notag\\
&+\frac1{4\pi}\sum_{\mc}\int_{-\infty}^{\infty}\hat{\phi}(\kappa)\frac{\ssqrt{mn}}{\cosh(\pi\kappa)}~\ol{\rho}_{\mc}(m,\kappa)~ \rho_{\mb,\mc}(n,\kappa) \d\kappa\notag
\end{align}
and
\begin{align}
\sum_\gamma\frac1{\gammaup}S_{\infty,\mb}(m,-n;\gammaup)\phi\left(\frac{4\pi\ssqrt{mn}}{\gammaup}\right)= &\sum_{f\in\mathcal{B}(Q)}\breve{\phi}(\kappa_f)\frac{\ssqrt{mn}}{\cosh(\pi\kappa_f)}~\ol{\rho}_f(m)~\rho_f(\mb,-n)\notag\\
&+\frac1{4\pi}\sum_{\mc}\int_{-\infty}^{\infty}\breve{\phi}(\kappa)\frac{\ssqrt{mn}}{\cosh(\pi\kappa)}~\ol{\rho}_{\mc}(m,\kappa) ~\rho_{\mb,\mc}(-n,\kappa) \d\kappa,\notag
\end{align}
where  the Bessel transforms are defined by
\begin{align}
&\tilde{\phi}(k)=4i^k\int_0^\infty\phi(x)J_{k-1}(x)\frac{\d x}x,\notag\\
&\hat{\phi}(\kappa)=2\pi i\int_0^\infty\phi(x)\frac{J_{2i\kappa}(x)-J_{-2i\kappa}(x)}{\sinh(\pi \kappa)}\frac{\d x}x,\notag\\
&\breve{\phi}(\kappa)=8\int_0^\infty\phi(x)\cosh(\pi\kappa)K_{2i\kappa}(x)\frac{\d x}x.\notag
\end{align}
\end{lemma}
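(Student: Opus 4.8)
The plan is not to reprove the Kuznetsov formula from scratch but to record how the stated version is assembled from standard spectral machinery, since every ingredient already appears in the literature. The starting point is the spectral decomposition of $L^2(\Gamma_0(Q)\backslash\mathbb{H})$: the residual (constant) subspace, the cuspidal subspace spanned by an orthonormal basis $\mathcal{B}(Q)$ of Maa\ss\ cusp forms together with the holomorphic cusp forms in $\mathcal{B}_k(Q)$ (entering through the holomorphic Poincar\'e series), and the continuous spectrum carried by the Eisenstein series $E_{\mc}(\cdot,\tfrac12+i\kappa)$ attached to the cusps $\mc$. The mechanism is the classical two-fold evaluation of the inner product $\langle P_m^{\infty},P_{n}^{\mb}\rangle$ of two Poincar\'e series, one attached to the cusp $\infty$ with index $m$ and one attached to $\mb=1/u$ with index $\pm n$.

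Unfolding one Poincar\'e series against the Fourier expansion of the other at the opposite cusp and summing over the double cosets $\Gamma_\infty\backslash\Gamma_0(Q)/\Gamma_{\mb}$ yields the geometric side: since $\mb$ is inequivalent to $\infty$ there is no diagonal term, and one is left with the sum of generalized Kloosterman sums $S_{\infty,\mb}(m,\pm n;\gamma)$ against a Bessel-type integral transform of the chosen profile function; selecting the profile so that this transform equals the given $\phi$ recovers the left-hand side of the lemma. The explicit shape \eqref{eqSS} of $S_{\infty,\mb}$ as an ordinary Kloosterman sum with an additive twist is precisely formula (1.6) of \cite{DI82}, and is immediate from the definition of $S_{\infty,\mb}$ with the stated scaling matrix $\sigma_\mb$. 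Evaluating the same inner product spectrally, by inserting the spectral expansion of $P_m^{\infty}$ and applying Parseval against $P_n^{\mb}$, produces the right-hand side: each Maa\ss\ form contributes $\hat\phi(\kappa_f)\frac{\ssqrt{mn}}{\cosh(\pi\kappa_f)}\ol{\rho}_f(m)\rho_f(\mb,n)$, each holomorphic weight-$k$ form contributes $\Gamma(k)\tilde\phi(k)\ssqrt{mn}\,\ol{\rho}_f(m)\rho_f(\mb,n)$, and the Eisenstein spectrum contributes the integral over $\kappa$ summed over the cusps $\mc$, the Bessel transforms $\tilde\phi,\hat\phi$ arising from Mellin--Barnes evaluations of the Gamma-integrals attached to the Whittaker functions. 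For the opposite-sign formula one takes the Poincar\'e series of negative index $-n$; the holomorphic contribution then disappears because holomorphic forms possess only positive Fourier coefficients, and only the Maa\ss\ and Eisenstein spectra survive, now with the transform $\breve\phi$. This is exactly the content of \cite[Theorems 9.4, 9.5, 9.7]{Iwa95}, adapted to the cusp pair $(\infty,\mb)$ as in \cite{DI82}, so the lemma follows.

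The only genuine analytic points are the absolute convergence of both sides and the legitimacy of interchanging summation with the spectral integral — this is where the hypotheses $\phi(0)=0$ and $\phi^{(j)}(x)\ll(1+x)^{-2-\ve}$ for $j=0,1,2$ are used, ensuring decay of the Poincar\'e series and of the Bessel transforms — together with careful bookkeeping of normalisations. If one insisted on a fully self-contained argument, the hard part would be establishing the spectral expansion of the Poincar\'e series (its convergence in a suitable half-plane and meromorphic continuation to the evaluation point) and the Sears--Titchmarsh-type inversion for the Bessel transforms; but since we are content to cite \cite{Iwa95}, nothing beyond matching \eqref{eqSS} with the definition of the generalized Kloosterman sum is required.
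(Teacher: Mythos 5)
Your proposal is correct and matches the paper's treatment: the paper gives no independent proof, simply quoting the formula from \cite[Theorems 9.4, 9.5, 9.7]{Iwa95} adapted to the cusp pair $(\infty,\mb)$ via \cite{DI82}, exactly the sources you invoke. Your additional sketch of the Poincar\'e-series unfolding and spectral evaluation is a faithful outline of the standard argument behind those cited theorems, so nothing is missing.
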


The Kuznetsov formula is often used together with the spectral large sieve inequalities; see Deshouillers and Iwaniec \cite[Theorem 2]{DI82}.
\begin{lemma}[Spectral large sieve]\label{lemsls}
Let $\mb$ be a cusp of $\Gamma_0(Q)$ equivalent to $\frac u w$ with $(u,w)=1$ and $w\mid Q$. For $K\ge 1$, $N\ge1$, and complex coefficients $(a_n)_{n\in[N,2N]}$, all three quantities
\[
\sum_{\substack{2\le k\le K\\ k~ \text{even}}}\Gamma(k)\sum_{f\in\mathcal{B}_k(Q)}\B|\sum_{n}a(n)\ssqrt{n}~\rho_f(\mb,n)\B|^2,  \ \ \ \ \ \ \ \ \ \ \ \ \sum_{\substack{f\in\mathcal{B}(Q)\\ |\kappa_f|\le K}}\frac1{\cosh(\pi\kappa_f)}\B|\sum_n a_n\ssqrt{n}~\rho_f(\mb,\pm n)\B|^2,
\]
\[
\sum_{\mc}\int_{-K}^K\frac1{\cosh(\pi\kappa)}\B|\sum_n a_n\ssqrt{n}~\rho_{\mb,\mc}(\pm n,\kappa)\B|^2d\kappa
\]
are bounded by
\[
\ll(K^2+\muup(\mb)N^{1+\ve})\|a_n\|_2^2,
\]
where $\muup(\mb)=Q^{-1}$ for $\mb=\infty$ and $\muup(\mb)=\left(w,\frac Qw\right)Q^{-1}$ otherwise.
\end{lemma}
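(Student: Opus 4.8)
This is \cite[Theorem~2]{DI82}; I sketch the argument and indicate where the real difficulty lies. The plan is to deduce all three bounds at once from the Kuznetsov formula for the pair of cusps $(\mb,\mb)$ --- which, the two cusps coinciding, carries a diagonal term $\delta_{m=n}$ alongside the Kloosterman contributions appearing in Lemma~\ref{lemKf} --- applied with a single test function $\phi=\phi_K$ chosen so that the whole spectral side is nonnegative. First I would fix, for each $K\ge1$, a weight function $\phi_K$ meeting the hypotheses of Lemma~\ref{lemKf} with the standard properties: its three Bessel transforms $\tilde\phi_K(k)$, $\hat\phi_K(\kappa)$, $\breve\phi_K(\kappa)$ are nonnegative and $\gg1$ on the ranges $2\le k\le K$ and $|\kappa|\le K$, $\phi_K(0)=0$, and $\phi_K$ together with a few of its derivatives satisfies suitable size and decay bounds. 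Such a $\phi_K$ is produced by a classical construction --- prescribe a smooth bump of the right shape and width $\asymp K$ on the spectral side and invert the Bessel transforms; see \cite[\S2]{DI82} and \cite{Iwa95}.

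Feeding $\phi=\phi_K$ into the $(\mb,\mb)$-versions of the two identities of Lemma~\ref{lemKf}, multiplying by $\overline{a_m}\,a_n$ and summing over $N\le m,n\le2N$ turns the spectral side into a sum of squares of linear forms in $(a_n)$ weighted by the nonnegative Bessel transforms; its diagonal part contributes $C_{\phi_K}\|a_n\|_2^2$, where $C_{\phi_K}$ is essentially the total number of spectral parameters up to $K$, hence $\asymp K^2$. Retaining on the spectral side only the terms of spectral parameter (weight, resp.\ $|\kappa|$) at most $K$ and using the lower bounds on the transforms, each of the three quantities in the statement is
\[
\ll K^2\|a_n\|_2^2+\B|\sum_{N\le m,n\le2N}\overline{a_m}\,a_n\sum_{\gammaup}\frac{1}{\gammaup}\,S_{\mb,\mb}(m,\pm n;\gammaup)\,\phi_K\!\left(\frac{4\pi\sqrt{mn}}{\gammaup}\right)\B|,
\]
with the $+$ sign handled by the first formula of Lemma~\ref{lemKf} and the $-$ sign by the second.

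It remains to bound the Kloosterman side by $\ll\muup(\mb)N^{1+\ve}\|a_n\|_2^2$. By the analogue of \eqref{eqSS} at the cusp $\mb$, $S_{\mb,\mb}(m,\pm n;\gammaup)$ is supported on moduli $\gammaup$ running through a fixed sublattice attached to $\mb$ --- which is precisely the source of the factor $\muup(\mb)=(w,Q/w)Q^{-1}$ --- and, up to an additive twist, it is an ordinary Kloosterman sum of modulus $\asymp\gammaup$, so Weil's bound $|S(m,n;c)|\le\tau(c)(m,n,c)^{1/2}c^{1/2}$ is available. Using the size and decay properties of $\phi_K$ to truncate and dyadically decompose the $\gammaup$-sum, then opening the Kloosterman sums $S_{\mb,\mb}(m,\pm n;\gammaup)$ and exploiting the cancellation in the resulting complete exponential sums --- via Cauchy--Schwarz and large-sieve-type inequalities for additive characters --- together with elementary bounds such as $\sum_{N\le m,n\le2N}|a_m||a_n|(m,n)^{1/2}\ll N^{1+\ve}\|a_n\|_2^2$, one arrives at the asserted bound. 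The same $\phi_K$ works for all three quantities simultaneously, so the three bounds follow together.

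\emph{Main obstacle.} The construction of $\phi_K$ and the positivity reduction are routine; the genuine difficulty is the final estimate of the Kloosterman side, where one must control $\phi_K(4\pi\sqrt{mn}/\gammaup)$ and the sums $S_{\mb,\mb}(m,\pm n;\gammaup)$ precisely enough --- and extract enough cancellation from the $m,n$-sums after opening the Kloosterman sums --- to recover the full power of $N$ and the exact dependence on the cusp $\mb$ through $\muup(\mb)$. That is the technical heart of \cite[Theorem~2]{DI82}.
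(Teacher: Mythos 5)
The paper offers no proof of Lemma \ref{lemsls}: it is quoted directly from Deshouillers--Iwaniec \cite[Theorem 2]{DI82}, which is exactly the attribution you make, so your treatment coincides with the paper's. Your accompanying sketch (Kuznetsov formula at the cusp pair $(\mb,\mb)$ with a positive test function, the diagonal giving $K^2$, and the Kloosterman side handled by opening the sums and large-sieve/Farey-fraction estimates) is consistent with the standard argument, and you rightly defer its technical heart --- the sharp bound $\muup(\mb)N^{1+\ve}$ on the Kloosterman side --- to \cite{DI82}.
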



\subsection{Bessel function and the Hankel-type transform}
For a given complex number $v$, the Bessel functions satisfy the following well-known bounds (see \cite[Appendix B.4]{Iwa02}):
\begin{align*}
J_v(x)\ll_v\begin{cases}
x^{-\frac12},  &x\ge1,\\
x^{\re (v)}, & x<1,
\end{cases}
\ \ \ \ \ \ \ \ \
Y_v(x), K_v(x)\ll_v \begin{cases}
1,  &x\ge1,\\
(1+|\log|x||)x^{-|\re (v)|}, & x<1.
\end{cases}
\end{align*}
For convenience in presentation, we uniformly express these bounds as
\begin{equation}\label{eqJKY}
J_v, Y_v, K_v\ll_v 1+x^{-\vartheta-\ve}
\end{equation}
with the parameter $\vartheta$ taking the values $-\re (v)$ for $J_v$ and $|\re (v)|$ for $Y_v$, $K_v$. In particular, $K_v$ exhibits rapid decay:
\begin{equation}\label{eqdecayK}
K_v(x)\ll_v x^{-\frac12}e^{-x}\quad \text{for} \quad x\ge 1+|v|^2.
\end{equation}


Let $F(x)$ be a smooth function with compact support in $[X,X+X_1]$, where $0<X_1\ll X$, such that
\begin{align}\label{eqf2}
F^{(j)}(x)\ll_{j} X_2^{-j}
\end{align}
for $X_2>0$.
We consider its Hankel-type transform defined by
\begin{align*}
\mathcal{F}_v(y)=\int_{0}^\infty F(x) \mI_{v}\left(4\pi\ssqrt{xy}\right)\d x,
\end{align*}
where $\mI_v$ denotes any of the Bessel functions $(J_v, Y_v$, or $K_v)$.

\begin{lemma}\label{lemwtW1}
Let $F$ be a smooth function as defined above, and let $\mI_v$ denote a Bessel function satisfying \eqref{eqJKY}. For given $v\in\mathbb{C}$, the Hankel-type transform satisfies
\begin{align}\label{eqdwtW}
y^j\mathcal{F}^{(j)}_v(y)\ll_{v,i,j} X_1\left(1+X y\right)^{\frac j2}(1+(Xy)^{-\vartheta-\ve})\left(\left(1+Xy\right)^{-\frac i2}+\left(1+X_2^2 X^{-1}y\right)^{-\frac i2}\r)
\end{align}
for any $i,j\ge 0$ and $y>0$.
In particular, the function $\mathcal{F}_v(y)$ decays rapidly when $y\gg \frac{X^\ve}{\min\{X,X_2^2/X\}}$.
\end{lemma}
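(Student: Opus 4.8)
The plan is to prove Lemma~\ref{lemwtW1} by splitting the $x$-integration according to the size of the Bessel argument $4\pi\sqrt{xy}$ on the support of $F$ (that is, according to whether $Xy\ll1$ or $Xy\gg1$) and, in each regime, reducing $y^{j}\mathcal{F}_{v}^{(j)}(y)$ to a Hankel-type transform of the same shape. For the $y$-derivatives I would use the scale-invariance identity $y\,\partial_{y}\bigl[\mI_{v}(4\pi\sqrt{xy})\bigr]=\tfrac12\bigl[z\,\mI_{v}'(z)\bigr]_{z=4\pi\sqrt{xy}}$ together with the recurrences $z\,\mI_{v}'(z)=v\,\mI_{v}(z)\mp z\,\mI_{v\pm1}(z)$; iterating gives $(y\partial_{y})^{j}\mathcal{F}_{v}(y)=\int_{0}^{\infty}F(x)\,\Psi_{v,j}\bigl(4\pi\sqrt{xy}\bigr)\,\d x$ for a function $\Psi_{v,j}$ assembled from Bessel functions of indices within $j$ of $v$. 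Since $z\,\tfrac{d}{dz}$ preserves the power in the small-argument behaviour, \eqref{eqJKY} yields $\Psi_{v,j}(z)\ll_{v,j}z^{-\vartheta-\ve}$ for $z<1$, uniformly in $j$, whereas the classical asymptotic expansions give, for $z\ge1$, $\Psi_{v,j}(z)=\sum_{\pm}z^{j-1/2}W_{v,j}^{\pm}(z)e^{\pm iz}$ with symbols $z^{\ell}(W_{v,j}^{\pm})^{(\ell)}(z)\ll_{v,j,\ell}1$ (with a single exponentially damped term $z^{j-1/2}W_{v,j}(z)e^{-z}$ when $\mI_{v}=K_{v}$). Because $y^{j}\partial_{y}^{j}$ is a linear combination of the operators $(y\partial_{y})^{k}$ with $k\le j$, whose bounds will be non-decreasing in $k$, it suffices to estimate $(y\partial_{y})^{j}\mathcal{F}_{v}$.

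When $Xy\ll1$ the Bessel argument is $\ll1$ throughout the support of $F$, so there is no oscillation to exploit; one uses $\Psi_{v,j}(4\pi\sqrt{xy})\ll1+(Xy)^{-\vartheta-\ve}$ by \eqref{eqJKY} and integrates over an interval of length $\ll X_{1}$. This is admissible because in this range $1+Xy\asymp1+X_{2}^{2}X^{-1}y\asymp1$ (using $X_{2}\le X$), so all the $(1+\cdot)$-factors on the right of \eqref{eqdwtW} are $\asymp1$. When $Xy\gg1$ the argument is $\gg1$ throughout the support; I insert the asymptotic expansion of $\Psi_{v,j}$ and am left with oscillatory integrals $\int F(x)A_{v,j}^{\pm}(x,y)e^{\pm4\pi i\sqrt{xy}}\,\d x$, whose amplitude $A_{v,j}^{\pm}(x,y)=(4\pi\sqrt{xy})^{j-1/2}W_{v,j}^{\pm}(4\pi\sqrt{xy})$ is smooth in $x$ on scale $X$ and of size $\ll(Xy)^{(2j-1)/4}$ on the support. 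The phase $\phi_{\pm}(x)=\pm4\pi\sqrt{xy}$ has $\phi_{\pm}'(x)=\pm2\pi\sqrt{y/x}\asymp\sqrt{y/X}\neq0$ and $\phi_{\pm}^{(\ell)}(x)\ll_{\ell}\sqrt{y/X}\,X^{1-\ell}$, so there is no stationary point and one may integrate by parts $i$ times (or stop earlier once it ceases to gain). Each step divides by $|\phi_{\pm}'|\asymp\sqrt{y/X}$ and then differentiates either the cutoff $F$, at cost $X_{2}^{-1}$, or the scale-$X$ factor $A_{v,j}^{\pm}/\phi_{\pm}'$, at cost $X^{-1}$; after $i$ steps (with the early-stopping option) this contributes a factor $\ll_{i}(1+X_{2}^{2}X^{-1}y)^{-i/2}+(1+Xy)^{-i/2}$. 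Combining with the trivial bound $X_{1}(Xy)^{(2j-1)/4}$ for the remaining integral and using $(Xy)^{(2j-1)/4}\ll(1+Xy)^{j/2}$ reproduces \eqref{eqdwtW} in this regime. For $\mI_{v}=K_{v}$ no integration by parts is needed: $(Xy)^{(2j-1)/4}e^{-c\sqrt{Xy}}\ll_{i}(1+Xy)^{j/2}(1+Xy)^{-i/2}$ already gives the (in fact rapid) decay.

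The one point that needs genuine care is the two-scale integration-by-parts bookkeeping in the oscillatory regime: one must keep the smoothness scale $X$ of the Bessel amplitude and the possibly finer scale $X_{2}$ of the cutoff $F$ separate, which is exactly what produces the \emph{sum} of two decay factors in \eqref{eqdwtW} rather than a single one; the ``integrate by parts $i$ times or stop early'' device, together with the elementary comparison $\min\{1,\,a^{-i/2}+b^{-i/2}\}\asymp_{i}(1+a)^{-i/2}+(1+b)^{-i/2}$, is what packages the gain in the stated $(1+\cdot)$-regularised form. Everything else is routine calculus: verifying that $\Psi_{v,j}$, the amplitudes $A_{v,j}^{\pm}$ and the symbols $W_{v,j}^{\pm}$ obey the claimed derivative bounds follows from Leibniz's rule, the chain rule applied to $z=4\pi\sqrt{xy}$, and the standard asymptotics of $J_{v},Y_{v},K_{v}$; and the concluding assertion that $\mathcal{F}_{v}(y)$ decays rapidly for $y\gg X^{\ve}/\min\{X,X_{2}^{2}/X\}$ is obtained by letting $i\to\infty$ in \eqref{eqdwtW}.
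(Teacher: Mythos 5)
Your argument is correct, but it takes a more elaborate route than the paper. The paper never splits into small/large argument regimes and never invokes the Hankel asymptotic expansion or non-stationary phase: it integrates by parts directly against the Bessel factor using the exact recursion $x\mI_{v}(x)=(v+1)\mI_{v+1}(x)+x\mI'_{v+1}(x)$, so that one application replaces $F(x)\mI_v(4\pi\sqrt{xy})$ by $\bigl(\tfrac{v+1}{4\pi\sqrt{xy}}F(x)-\tfrac1{2\pi}\sqrt{x/y}\,F'(x)\bigr)\mI_{v+1}(4\pi\sqrt{xy})$, producing in one stroke the same two per-step gains $(Xy)^{-1/2}$ and $(X_2^{2}X^{-1}y)^{-1/2}$ that you extract from your phase analysis; iterating $i$ times and using only the crude bounds \eqref{eqJKY} gives \eqref{eqdwtW}, and the $y$-derivatives are handled by differentiating $j$ times under the integral sign and then estimating trivially via $\mI'_v=\tfrac12(\pm\mI_{v-1}-\mI_{v+1})$, rather than through your Euler-operator/recurrence reduction of $y^j\partial_y^j$ to $(y\partial_y)^k$. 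What your approach buys is transparency about where the oscillation and the two scales $X$, $X_2$ enter, at the price of needing symbol-type control of the amplitudes $W^{\pm}_{v,j}$ from the large-argument asymptotics (standard, but extra machinery) and the ``stop-early'' bookkeeping to regularise the gains; the paper's identity-based integration by parts handles all argument ranges and the possible failure of a gain uniformly, which is why its proof is a few lines. Two small remarks: your parenthetical use of $X_2\le X$ in the $Xy\ll1$ regime is not among the hypotheses (only $X_1\ll X$ and $X_2>0$ are assumed), but it is also unnecessary, since in that regime the bracket in \eqref{eqdwtW} already contains $(1+Xy)^{-i/2}\asymp1$ and the trivial bound suffices; and your mixed-product bookkeeping is only valid together with the min/stop-early device you state, since terms with all derivatives falling on $F$ can exceed the trivial bound when $X_2^2X^{-1}y<1$ — you do address this, so it is not a gap.
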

\begin{proof}
The trivial bound
\begin{align*}
\mathcal{F}_v(y)\ll  X_1(1+(Xy)^{-\vartheta-\ve})
\end{align*}
follows directly from \eqref{eqJKY}.
Applying the Bessel recursion formula \cite[8.472.1]{GR65}
\begin{align*}
x\mI_{v}(x)=(v+1)\mI_{v+1}(x)+x\mI'_{v+1}(x)
\end{align*}
and integrating by parts, we obtain
\begin{align}\label{eqWW'}
\int_0^\infty F(x)\mI_{v}\left(4\pi\ssqrt{xy}\right)\d x=\int_0^\infty \B(\frac{v+1}{4\pi\ssqrt{xy}}F(x)-\frac{1}{2\pi}\ssqrt{\frac{x}{y}}F'(x)\B)\mI_{v+1}\left(4\pi\ssqrt{xy}\right)\d x.
\end{align}
Repeated applying this formula $i$ times yields
\begin{align*}
\int_0^\infty F(x) \mI_{v}\left(4\pi\ssqrt{xy}\right)\d x
\ll_{v,i} X_1(1+(Xy)^{-\vartheta-\ve})\left(\left(1+Xy\right)^{-\frac i2}+\left(1+X_2^2 X^{-1}y\right)^{-\frac i2}\r).
\end{align*}
For derivatives, we differentiate $j$ times under the integral sign, followed by $i$ times applications of \eqref{eqWW'}. Then a trivially estimate with $\mI'_v(x)=\frac12\left(\pm\mI_{v-1}(x)-\mI_{v+1}(x)\r)$ and the bounds in \eqref{eqJKY} establishes the estimate \eqref{eqdwtW}.
\end{proof}

To handle the Hankel-type transform arising from the Voronoi summation formula, we require its decay properties and smooth asymptotics, established in the following two lemmas.

\begin{lemma}[Decay properties]\label{lemwtW} Let $W$ be a smooth function compactly supported in $[1/2,3]$ and satisfying \eqref{eqW}. For given parameters $b, q, M, N\ge1$, define
\begin{align}\label{eqV0}
\mathring{V}_{\pm}(y,h)=\int_{0}^\infty W\left(\frac {bx-hq}{N}\r)W\left(\frac {bx}M\r) \mJ_{\pm}\left(4\pi\ssqrt{xy}\right)\d x,
\end{align}
where $\mJ_{\pm}$ are as defined in \eqref{eqJ1}-\eqref{eqJ2}, for a cuspidal Hecke eigenform $f$ of level $1$.
We have the following uniform estimate
\begin{align}\label{eqdwtV}
y^jh^l\frac{\partial^{j+l}}{\partial y^j\partial h^l}\mathring{V}_\pm(y,h)\ll_{i,j} \frac{\min\{M,N\}}b \Bigg(\left(1+\frac Mby\right)^{-\frac i2}+\left(1+\frac{\min\{M,N\}^2}{bM}y\right)^{-\frac i2}\Bigg),
\end{align}
valid for any $i,j,l\ge 0$. In particular, the transform decays rapidly for $y\gg\max\left\{\frac{b}{M},\frac{bM}{N^2}\right\}$.
\end{lemma}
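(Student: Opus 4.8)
The plan is to deduce the estimate from the general Hankel-transform bound of Lemma~\ref{lemwtW1}. Write $F_h(x)=W\!\left(\frac{bx-hq}{N}\right)W\!\left(\frac{bx}{M}\right)$, so that $\mathring{V}_\pm(y,h)=\int_0^\infty F_h(x)\,\mJ_{\pm}(4\pi\ssqrt{xy})\,\d x$. The first step is to record the analytic data of $F_h$. The factor $W(bx/M)$ confines $x$ to $bx\in[M/2,3M]$, an interval of length $\asymp M/b$; the factor $W((bx-hq)/N)$ confines $x$ to a second interval of length $\asymp N/b$. If these two intervals are disjoint then $\mathring{V}_\pm\equiv 0$ and there is nothing to prove; otherwise $F_h$ is supported in some $[X,X+X_1]$ with $X\asymp M/b$ and $X_1\ll\min\{M,N\}/b$, and on its support $|hq|\ll\max\{M,N\}$. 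Differentiating and using \eqref{eqW} gives $F_h^{(j)}(x)\ll_j\left(b/\min\{M,N\}\right)^j$, so condition \eqref{eqf2} holds with $X_2=\min\{M,N\}/b$. Finally, for the kernels that occur here — $\mJ_{\pm}=2\pi i^{k}J_{k-1}$ when $f$ is holomorphic of weight $k$, and the combination of $J_{\pm 2i\kappa}$ and $K_{2i\kappa}$ in \eqref{eqJ2} with $\kappa$ real (Remark~\ref{remark2}) when $f$ is a Maa\ss\ form — the bound \eqref{eqJKY} applies with $\vartheta=-(k-1)$, respectively $\vartheta=0$, and in both cases $\mJ_{\pm}(x)\ll_f 1$ for every $x>0$, so the trivial bound $\mathring{V}_\pm(y,h)\ll\min\{M,N\}/b$ is at hand.

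To dispose of the $h$-derivatives, I would differentiate the single $h$-dependent factor directly:
\[
\frac{\partial^{l}}{\partial h^{l}}\mathring{V}_\pm(y,h)=\left(-\frac{q}{N}\right)^{l}\int_0^\infty W^{(l)}\!\left(\frac{bx-hq}{N}\right)W\!\left(\frac{bx}{M}\right)\mJ_{\pm}(4\pi\ssqrt{xy})\,\d x .
\]
The integral on the right is again a Hankel transform of the type covered by Lemma~\ref{lemwtW1}, of a smooth function with exactly the same support $[X,X+X_1]$ and the same smoothness scale $X_2$ as $F_h$; combining $h^{l}$ with the prefactor $(q/N)^{l}$ and using $|hq|\ll\max\{M,N\}$ shows that $h^{l}\partial_h^{l}\mathring{V}_\pm$ is, up to a harmless power of $\max\{M,N\}/N$, of the same shape as $\mathring{V}_\pm$ itself. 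Thus it suffices to estimate $y^{j}\partial_y^{j}$ of such a transform.

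The last step is to apply Lemma~\ref{lemwtW1} to the integral just produced, with the data $X\asymp M/b$, $X_1\ll\min\{M,N\}/b$, $X_2=\min\{M,N\}/b$, and the relevant $\vartheta$. Since $Xy\asymp\frac{M}{b}y$ and $X_2^{2}X^{-1}y\asymp\frac{\min\{M,N\}^{2}}{bM}y$, this produces, for each $i\ge 0$, a bound of size
\[
\frac{\min\{M,N\}}{b}\,\left(1+\tfrac{M}{b}y\right)^{\frac{j+l}{2}}\bigl(1+(Xy)^{-\vartheta-\ve}\bigr)\!\left(\left(1+\tfrac{M}{b}y\right)^{-\frac i2}+\left(1+\tfrac{\min\{M,N\}^{2}}{bM}y\right)^{-\frac i2}\right),
\]
and, choosing $i$ large enough to absorb the prefactor $\left(1+\tfrac{M}{b}y\right)^{(j+l)/2}\bigl(1+(Xy)^{-\vartheta-\ve}\bigr)$ into the decay factors, one arrives at \eqref{eqdwtV}; letting $i\to\infty$ yields the rapid decay for $y\gg\max\{b/M,\,bM/N^{2}\}$. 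I expect the second step to be the main obstacle: one must convert the $h$-derivatives without picking up an uncontrolled factor depending on $\max\{M,N\}/\min\{M,N\}$, and then be careful about charging the polynomial-in-$(1+\tfrac{M}{b}y)$ prefactor against the decay. For $Xy\ll 1$ this last point is immediate from the trivial bound $\mJ_{\pm}\ll_f 1$ above; for $Xy\gg 1$ the genuine gain comes from iterating the recursion \eqref{eqWW'}, and this is also where the holomorphic case, in which $\vartheta<0$ makes $1+(Xy)^{-\vartheta-\ve}$ a true positive power, has to be watched.
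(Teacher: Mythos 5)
Your proposal follows the same route as the paper: the proof given there consists precisely of noting that $\kappa$ is real (Remark \ref{remark2}), so that one may take $\vartheta=0$ in \eqref{eqJKY}, and then applying Lemma \ref{lemwtW1} with $X=M/b$ and $X_1=X_2=\min\{M,N\}/b$; your verification of the support, of condition \eqref{eqf2}, and of the boundedness of $\mJ_\pm$ reproduces exactly this. The one step where you are looser than the statement demands is the treatment of the $h$-derivatives: differentiating the factor $W\!\left(\frac{bx-hq}{N}\right)$ directly costs $(q/N)^l$, and since $|hq|\asymp M$ on the support when $M\gg N$, the factor $(\max\{M,N\}/N)^l=(M/N)^l$ that you call harmless is in that case a genuine power of $q$ in the paper's application; it does not depend on $y$, so it cannot be absorbed by taking $i$ large, while \eqref{eqdwtV} carries no such loss and is later used without it when checking the smoothness hypotheses of the bilinear estimate. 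The repair is to write $\partial_h W\!\left(\frac{bx-hq}{N}\right)=-\frac{q}{b}\,\partial_x W\!\left(\frac{bx-hq}{N}\right)$ and integrate by parts in $x$: since $hq\ll M$ in the problematic case $M\ge N$, each $h$-derivative then costs only $O\bigl(1+(\tfrac{M}{b}y)^{1/2}\bigr)$, the same growth a $y$-derivative produces in \eqref{eqdwtW} (for $N\ge M$ your direct differentiation is already adequate, since then $\max\{M,N\}/N=1$). With that adjustment, your handling of the remaining $(1+\tfrac{M}{b}y)$-prefactor --- absorbing it by taking $i$ larger, which is unproblematic in the regime $\tfrac{M}{b}y\ll q^{\ve}$ where the derivative bounds are actually invoked --- is exactly the reading intended by the paper's two-line deduction from Lemma \ref{lemwtW1}.
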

\begin{proof}
Since the spectral parameter $\kappa$ in \eqref{eqJ2} is real (see Remark \ref{remark2}), we may take $\vartheta=0$ in the uniform bound \eqref{eqJKY}.
The conclusion then follows from Lemma \ref{lemwtW1} by setting
$X=M/b$ and $X_1=X_2=\min\{M,N\}/b$.
\end{proof}

Next lemma provides a smooth asymptotic formula for the Hankel-type transform with respect to $J_v$, whose proof one may find in \cite[Lemma 17]{BM15}.
\begin{lemma}[Smooth asymptotics]\label{lemdecomJ}
Let $W(x)$ be a smooth function with compact support in $[1/2,3]$ and satisfying \eqref{eqW}. Let $v\in\mathbb{C}$ be a fixed number with $\re(v)\ge0$. For $y, z>0$, define
\[
W^*(y,z)=\int_0^\infty W(x)J_v\left(\ssqrt{y^2+xz^2}\r)dx.
\]
Fix $C\ge 1$ and $A, \ve>0$, when $y\gg z$ we have
\begin{equation}\label{eqsmooth}
W^*(y,z)=W_{+}(y,z)e^{iy}+W_-(y,z)e^{-iy}+O_A(C^{-A}),
\end{equation}
where $W_{\pm}$ (depending on $v$) satisfy
\begin{align*}
y^iz^j\frac{\partial^i}{\partial y^i}\frac{\partial^j}{\partial z^j}W_{\pm}(y,z)
\begin{cases}
=0,  &y/z^2\le C^{-\ve},\\
\ll_{i,j,v} C^{\ve(i+j)}\min\{y^{-\frac12},1\}, & \text{otherwise}.
\end{cases}
\end{align*}
\end{lemma}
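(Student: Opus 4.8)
\textbf{Proof proposal for Lemma \ref{lemdecomJ}.}
The plan is to reduce the stationary-phase analysis to the classical uniform asymptotic expansion of the Bessel function $J_v$ for large argument, and then to track how the smooth weight $W$ interacts with the resulting oscillation. First I would fix $C\ge 1$, $A>0$ and restrict attention to the range $y\gg z$, which is the only regime where a main term is claimed. Since $W$ is supported in $[1/2,3]$, the argument $\ssqrt{y^2+xz^2}$ ranges over an interval comparable to $y\sqrt{1+O(z^2/y^2)}$, so in particular it is $\gg y$. When $y/z^2\le C^{-\ve}$ the whole integral is genuinely small only after integration by parts in $x$; more precisely, one writes $J_v(\ssqrt{y^2+xz^2})$ and repeatedly integrates by parts, each step gaining a factor $z^2/(y\cdot\text{argument})\ll z^2/y^2$, which is $\le C^{-\ve}$, so $A$ such steps produce the error $O_A(C^{-A})$ and one simply declares $W_{\pm}=0$ in this subrange. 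The substance is the complementary range $y/z^2> C^{-\ve}$.

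In that range I would insert the standard asymptotic expansion
\[
J_v(u)=\sum_{0\le \ell<L}\frac{c_\ell^{\pm}(v)}{u^{\ell+\frac12}}e^{\pm i u}+O_L(u^{-L-\frac12}),
\]
valid for $u\gg 1$ (here $u=\ssqrt{y^2+xz^2}\gg y\gg 1$, so the error is acceptable after choosing $L=L(A)$ large), with the two signs summed. Substituting $u=\ssqrt{y^2+xz^2}=y\ssqrt{1+xz^2/y^2}$ and Taylor-expanding $u$ in the small quantity $xz^2/y^2$, one isolates the pure phase $e^{\pm iy}$ and absorbs everything else — the remaining oscillation $e^{\pm i(u-y)}$, the amplitude $u^{-\ell-\frac12}$, and $W(x)$ — into the definition of $W_{\pm}(y,z)$:
\[
W_{\pm}(y,z)=\sum_{0\le\ell<L}c_\ell^{\pm}(v)\int_0^\infty W(x)\,(y^2+xz^2)^{-\frac{2\ell+1}{4}}\,e^{\pm i(\ssqrt{y^2+xz^2}-y)}\,\d x.
\]
The size $W_{\pm}\ll\min\{y^{-\frac12},1\}$ is immediate from $y^2+xz^2\asymp y^2$ and $|W|\ll 1$ (the $\ell=0$ term dominates and gives $y^{-1/2}$; the $\min$ with $1$ is only needed for small $y$, but in our regime $y\gg z\ge $ some lower bound one may as well keep it for uniformity). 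For the derivative bounds I would differentiate under the integral sign: each $\partial_y$ or $\partial_z$ hits either the amplitude $(y^2+xz^2)^{-(2\ell+1)/4}$, producing a factor $O(1/y)$ or $O(z/y^2)$, or the phase $e^{\pm i(\ssqrt{y^2+xz^2}-y)}$, producing $\partial_y(\ssqrt{y^2+xz^2}-y)=y/\ssqrt{y^2+xz^2}-1=O(z^2/y^2)$ and $\partial_z(\cdots)=xz/\ssqrt{y^2+xz^2}=O(z/y)$; in every case one gains the claimed $y^{-i}z^{-j}$ up to the harmless $C^{\ve(i+j)}$ coming from the finitely many differentiations needed to push the Bessel error term below $C^{-A}$, together with integration by parts in $x$ when $\ell$ grows. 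Collecting the error from truncating the Bessel expansion at $L$ and from the subrange $y/z^2\le C^{-\ve}$ gives the global $O_A(C^{-A})$.

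The main obstacle is bookkeeping the uniformity in $z$: one must check that differentiating the phase never costs more than a power of $y^{-1}$ despite the factor $x$ appearing in $y^2+xz^2$, and that the integration by parts needed to handle higher Bessel-expansion terms $\ell\ge 1$ (where the amplitude alone is not small enough) does not destroy the phase structure. Both are manageable because on the support of $W$ the ratio $xz^2/y^2$ is $\ll z^2/y^2\ll 1$ in the relevant range, so all derivatives of $\ssqrt{y^2+xz^2}$ with respect to $x$ are comparable to $z^2/y$ and each integration by parts in $x$ gains a clean factor $z^2/y^2$; since in the retained range $y/z^2>C^{-\ve}$ this factor is only $O(C^{\ve})$ per step, finitely many steps suffice and the cumulative loss is the stated $C^{\ve(i+j)}$. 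Since the full argument is carried out in detail in \cite[Lemma 17]{BM15}, I would cite it for the routine parts and only indicate the above structure.
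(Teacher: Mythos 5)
Your overall strategy --- split off the phases $e^{\pm iu}$ of the Bessel function, factor out $e^{\pm iy}$, control the leftover oscillation $e^{\pm i(\sqrt{y^2+xz^2}-y)}$ by differentiating under the integral, and discard the range $y/z^2\le C^{-\ve}$ by non-stationary phase --- is exactly the argument behind the paper's treatment, which in fact consists only of the citation to \cite[Lemma 17]{BM15}; so deferring the routine details to that reference is legitimate. However, two quantitative steps in your sketch are wrong as written. First, in the range $y/z^2\le C^{-\ve}$ the gain per integration by parts in $x$ is governed by the reciprocal of the phase derivative $\partial_x\sqrt{y^2+xz^2}\asymp z^2/y$ together with the slow variation of the amplitude, i.e.\ it is $\asymp\max\{y/z^2,\,1/y\}$, and this is $\le C^{-\ve}$ precisely because of the range condition (note also $y\gg C^{\ve}$ there, since $C^{\ve}y\le z^2\ll y^2$, so the oscillatory regime is legitimate). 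Your stated gain ``$z^2/(y\cdot\text{argument})\ll z^2/y^2\le C^{-\ve}$'' is neither the correct factor nor a consequence of the hypotheses: for instance $y=C^{2\ve}$, $z^2=y^2C^{-\ve/2}$ satisfies $y\gg z$ and $y/z^2\le C^{-\ve}$ but $z^2/y^2=C^{-\ve/2}>C^{-\ve}$. The conclusion that $W^*\ll_A C^{-A}$ in this subrange is still correct once the bookkeeping is fixed.

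Second, your insertion of the truncated large-argument expansion of $J_v(u)$ with error $O_L(u^{-L-1/2})$ silently assumes $y\gg1$ (``$u\gg y\gg1$''), which is not implied by the hypotheses: there is no lower bound on $z$, and the factor $\min\{y^{-\frac12},1\}$ in the statement shows that bounded and small $y$ are within scope; for such $y$ the truncation error is not $O_A(C^{-A})$, so this step fails there. The standard repair (and what underlies \cite[Lemma 17]{BM15}) is to use the exact decomposition $J_v=\tfrac12\bigl(H^{(1)}_v+H^{(2)}_v\bigr)$ with $H^{(1)}_v(u)=e^{iu}g_+(u)$, $H^{(2)}_v(u)=e^{-iu}g_-(u)$ and $u^jg_\pm^{(j)}(u)\ll_{j,v}(1+u)^{-\frac12}$ for $u\ge1$, so that no expansion error is created, and to treat the regime $y\ll1$ separately by simply placing $e^{\mp iy}W^*$ into $W_\pm$, where the required bound is only $O(1)$ and the operators $y\partial_y$, $z\partial_z$ are harmless. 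Finally, declaring $W_\pm=0$ for $y/z^2\le C^{-\ve}$ while defining them by explicit integrals elsewhere requires a smooth cutoff in the variable $y/z^2$ so that the derivative bounds persist across the transition; this is exactly where the tolerated factors $C^{\ve(i+j)}$ are consumed. With these repairs, and the remaining details delegated to \cite[Lemma 17]{BM15} as you propose, the argument is sound.
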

After applying the smooth asymptotic formula \eqref{eqsmooth}, the corresponding Bessel transform arising from the Kuznetsov formula satisfies the following upper bound.
\begin{lemma}\label{lemBe}
For $X\ge1$, let
\[
\phi(x)=W\left(\frac xX\r)\exp(\pm i x),
\]
where $W$ is a smooth function of fixed compact support satisfying \eqref{eqW}. Then for $k\in\mathbb{N}$, $\kappa\in\mathbb{R}\cup(-\frac i2,\frac i2)$, we have
\begin{align}\label{eqphi1}
\tilde{\phi}(k), \ \hat{\phi}(\kappa)\ll X^{-\frac12+\ve},
\end{align}
and also
\begin{align}\label{eqphi2}
\tilde{\phi}(k)\ll |k|^{-A},\ \hat{\phi}(\kappa)\ll |\kappa|^{-A}, \ \text{for}~\ k, \ |\kappa|\ge \left(1+X^{\frac12}\r)^{1+\ve}
\end{align}
for any given $A>0$.
\end{lemma}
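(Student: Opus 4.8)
\medskip\noindent\emph{Proof plan.}\ \ Up to absolute constants and the normalizing factors $\sinh(\pi\kappa)^{-1}$ or $\cosh(\pi\kappa)$, each of $\tilde\phi(k)$, $\hat\phi(\kappa)$, $\breve\phi(\kappa)$ is an oscillatory integral of the shape
\[
I(\nu):=\int_0^\infty W\!\left(\frac xX\right)e^{\pm ix}\,\mathcal I_\nu(x)\,\frac{\d x}{x},\qquad \mathcal I_\nu\in\{J_{k-1},\ J_{\pm2i\kappa},\ K_{2i\kappa}\},
\]
supported on $x\asymp X$. I would argue in three steps. First, the $K$-transform $\breve\phi$, and the large orders $k>3X$, are disposed of directly: $K_{2i\kappa}(x)$ decays exponentially once $x\gtrsim|\kappa|$ by \eqref{eqdecayK}, while $J_{k-1}(x)$ is exponentially small below its turning point, so these contributions are $\ll_A(|k|+|\kappa|)^{-A}$. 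Second, for orders of size $\ll X^{1/2+\ve}$ one has, on the support $x\asymp X$, that $x^2-(k-1)^2\asymp x^2$ (resp.\ $x^2+4\kappa^2\asymp x^2$), so $|\mathcal I_\nu(x)|\ll x^{-1/2}$ uniformly; combined with
\[
\frac{J_{2i\kappa}(x)-J_{-2i\kappa}(x)}{\sinh(\pi\kappa)}\ll x^{-1/2}\log(2x)\qquad\bigl(x\ge1,\ \kappa\in\mathbb R\cup(-\tfrac i2,\tfrac i2)\bigr),
\]
which follows from $|\Gamma(1\pm2i\kappa)|^{-1}\asymp(1+|\kappa|)^{1/2}e^{-\pi|\kappa|/2}$ and the classical large-argument asymptotics of $J$ (the logarithm appearing only through $\partial_vJ_v|_{v=0}$ near $\kappa=0$), this yields $I(\nu)\ll\int_{X/2}^{3X}x^{-3/2}\log(2x)\,\d x\ll X^{-1/2+\ve}$, which is \eqref{eqphi1}. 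Third, for orders $\gg X^{1/2+\ve}$ I would prove \eqref{eqphi2} by stationary phase; this also gives \eqref{eqphi1} in that range once the exponent $A$ is taken large.

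For the third step I would insert the Debye asymptotic expansion of the Bessel function: away from the turning point $x=|\nu|$,
\[
J_\nu(x)=\sum_{\pm}\frac{G_\pm(x,\nu)}{(x^2-\nu^2)^{1/4}}\,e^{\pm i\omega_\nu(x)},\qquad \omega_\nu(x)=\sqrt{x^2-\nu^2}-\nu\arccos\!\frac{\nu}{x},
\]
where $G_\pm$ has a complete asymptotic expansion in inverse powers of $\sqrt{x^2-\nu^2}$, all terms carrying the same oscillation, and where $x^2-\nu^2$ is read as $x^2+4\kappa^2$, with $\omega_\nu$ modified accordingly, when $\nu=\pm2i\kappa$ — for which there is no real turning point, so the expansion holds for all $x$. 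This turns $I(\nu)$ into finitely many integrals with amplitude of size $\asymp X^{-1}(x^2-\nu^2)^{-1/4}$ and phase $\psi(x)=\pm x\pm\omega_\nu(x)$ (the first sign fixed, the second ranging over $\pm$), so that $\psi'(x)=\pm1\pm\sqrt{x^2-\nu^2}/x$. For the same-sign choice $|\psi'|=1+\sqrt{x^2-\nu^2}/x\ge1$, while for the opposite-sign choice
\[
|\psi'|=\frac{\bigl|x-\sqrt{|x^2-\nu^2|}\bigr|}{x}=\frac{|\nu|^2}{x\bigl(x+\sqrt{|x^2-\nu^2|}\bigr)}\ \ge\ \frac{|\nu|^2}{4\,x\,\max\{x,|\nu|\}}\ \gg\ X^{-1+\ve},
\]
using $x\le3X$ on the support and $|\nu|^2\ge X^{1+\ve}$, the latter being forced by the hypothesis $k,|\kappa|\ge(1+X^{1/2})^{1+\ve}$. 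Thus the phase is in every case non-stationary with $|\psi'|\gg X^{-1+\ve}$, and a standard integration-by-parts argument gains a factor $\ll(X^{-1+\ve})^{-1}\cdot X^{-1}=X^{-\ve}$ per step, giving
\[
I(\nu)\ll X^{-1/2}\,X^{-\ve N}\qquad\text{for every }N\ge0,
\]
with base bound $X^{-1/2}$ as in the second step. Since $|\nu|\le3X$, the right-hand side is $\ll_N|\nu|^{-N}$, which is \eqref{eqphi2}.

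The one genuinely delicate point — and, I expect, the main obstacle — is the behaviour near the turning point $x=|\nu|$, relevant only for the real order $k-1$ and only when $k\asymp X$. There the Debye expansion degenerates and the amplitude and phase vary on scales shrinking to $k^{1/3}$; the clean way to handle this is to split the range into the Airy zone $\bigl|x-(k-1)\bigr|\ll k^{1/3}$ and dyadic annuli $\bigl|x-(k-1)\bigr|\asymp 2^jk^{1/3}$ outside it. On the Airy zone the uniform (Airy-type) asymptotics show that $J_{k-1}(x)$ is smooth on the scale $k^{1/3}\gg1$, so the genuine oscillation $e^{\pm ix}$ of $\phi$, whose phase has derivative $1$, again permits repeated integration by parts, and the zone, of length $\ll k^{1/3}$, contributes $\ll_N k^{-N}$. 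On each outer annulus the Debye bounds hold with local scale $2^jk^{1/3}\gg X^{\ve}$, so the step-three estimate applies annulus by annulus, the total number of annuli being $O(\log X)$; the range $x<(k-1)-k^{1/3}$ is negligible by exponential decay. The remaining bookkeeping — uniformity in the spectral parameter of the Debye error terms and of the $\sinh/\cosh$ normalizations, where the imaginary-order asymptotics and \eqref{eqdecayK} enter — is routine.
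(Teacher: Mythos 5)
Your plan is sound, but it is doing far more work than the paper, which disposes of the lemma in two lines: for $k\in\mathbb{N}$ and $\kappa\in\mathbb{R}$ it simply cites Jutila \cite[Corollary \& Remark 1]{Jut99}, and for the exceptional range $\kappa\in(-\tfrac i2,\tfrac i2)$ (where the order $2i\kappa$ is real and bounded) it notes that \eqref{eqphi1} follows from the classical pointwise bound $J_v(x)\ll_v x^{-1/2}$ for $x\ge1$, the oscillation of $e^{\pm ix}$ not even being needed there. What you sketch --- small orders handled by the $x^{-1/2}$ bound on the support $x\asymp X$, large orders by Debye/uniform asymptotics plus non-stationary phase with $|\psi'|\gg \nu^2/X^2\gg X^{-1+\ve}$, and an Airy-zone/dyadic-annuli treatment of the turning point $x\asymp k$ --- is essentially the argument underlying the cited result, so your route buys self-containedness at the cost of reproving a known lemma, while the paper buys brevity by outsourcing it. Two small corrections to your write-up: the claim that $\breve\phi$ is killed outright by \eqref{eqdecayK} is only valid for $|\kappa|\ll X^{1/2}$, since \eqref{eqdecayK} requires $x\ge 1+4\kappa^2$ and for larger $|\kappa|$ the kernel $\cosh(\pi\kappa)K_{2i\kappa}(x)$ is not small but oscillatory of size about $(\kappa^2-x^2)^{-1/4}$, needing the same non-stationary-phase treatment as $\hat\phi$ --- this is harmless here only because the lemma (and the paper's application, which involves $S_{\infty,\mb}(hq,+m;\gammaup)$ only) never uses $\breve\phi$; and at the boundary orders $\asymp X^{1/2+\ve}$ the ``classical large-argument asymptotics'' you invoke in step two have error term $O(\nu^2/x)$, which is not small there, so you should appeal instead to the uniform bound $|J_\nu(x)|\ll (x^2-\nu^2)^{-1/4}$ (Olver/Balogh), i.e.\ the same uniform asymptotics you already use in step three.
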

\begin{proof}
For $k\in\mathbb{N}$, $\kappa\in\mathbb{R}$, see \cite[Corollary \& Remark 1]{Jut99}. For bounded $\kappa\in (-\frac i2,\frac i2)$, the estimate \eqref{eqphi1} follows from \eqref{eqJKY}.
\end{proof}

\section{Sketch of the proof of theorem \ref{themain}}
We decompose the moment into contributions from even and odd characters separately. For $\sigma\in\{1,-1\}$, we define
\begin{equation}\label{eqMsigma}
\mM_{\sigma}(q;a,b)=\frac1{\varphi^*(q)}\ssum_{\substack{\chi\ppmod q\\ \chi(-1)=\sigma}} L\left(\tfrac12,f\otimes\chi\r)L\left(\tfrac12,\ol\chi\r)^2\chi(a\ol{b}).
\end{equation}
Then, as discussed in Remark \ref{rem1}, we have
\[
M_{f,E}(q;a,b)=\mM_{\ve(f)}(q;a,b)
\]
when $f$ is holomorphic, and
\[
M_{f,E}(q;a,b)=\sum_{\sigma\in\{1,-1\}}\mM_{\sigma}(q;a,b)
\]
when $f$ is a Maa\ss~form with root number $\ve(f)=1$.

Substituting the approximate functional equation \eqref{eqafe} into \eqref{eqMsigma} and applying character orthogonality \eqref{eqorth}, we eliminate the averaging to obtain
\[
2\mM_{\sigma}(q;a,b)=\mmM_{\sigma}(q;a,b)+\mmM_{\sigma}(q;b,a),
\]
where
\begin{align*}
\mmM_{\sigma}(q;a,b)=&
\frac1{\varphi^*(q)}\sum_{d\mid q}\vp(d)\mu\left(\frac qd\r)\sum_{\substack{bm\equiv an \ppmod d\\(mn,q)=1}} \frac{\lambda(m)\tau(n)}{(mn)^{\frac12}}V_{f,\sigma}\left(\frac{mn}{q^2}\r)\\
&\ \ \ \ \ \ +\frac{\sigma}{\varphi^*(q)}\sum_{d\mid q}\vp(d)\mu\left(\frac qd\r)\sum_{\substack{bm\equiv-an \ppmod d\\ (mn,q)=1}}\frac{\lambda(m)\tau(n)}{(mn)^{\frac12}}V_{f,\sigma}\left(\frac{mn}{q^2}\r).
\end{align*}
The main term of $\mmM_{\sigma}(q;a,b)$ arises from the diagonal terms $am=bn$, which
 are explicitly computed as
\begin{align*}
MT_{\sigma}^d(q;a,b)&=\frac1{\varphi^*(q)}\sum_{d\mid q}\vp(d)\mu\left(\frac qd\r)\sum_{\substack{bm= an \\(mn,q)=1}}\frac{\lambda(m)\tau(n)}{(mn)^{\frac12}}V_{f,\sigma}\left(\frac{mn}{q^2}\r)\\
&=\sum_{(n,q)=1}\frac{\lambda(an)\tau(bn)}{(ab)^{\frac12}n}V_{f,\sigma}\left(\frac{abn^2}{q^2}\r)\\
&= \frac1{(ab)^{\frac12}}\sum_{a_1\mid a^\infty}\sum_{b_1\mid b^\infty} \frac{\lambda(aa_1b_1)\tau(ba_1b_1)}{a_1b_1} \sum_{(n,abq)=1}\frac{\lambda(n)\tau(n)}{n}V_{f,\sigma}\left(\frac{ab\left(a_1b_1n\r)^2}{q^2}\r).
\end{align*}
Evaluating the sum over $n$ using the definition of $V$, we obtain
\begin{align*}
\sum_{(n,abq)=1}&\frac{\lambda(n)\tau(n)}{n}V_{f,\sigma}\left(\frac{ab\left(a_1b_1n\r)^2}{q^2}\r)\\
&=\prod_{p\mid abq}\left(1-\frac{\lambda(p)}{p}+\frac1{p^2}\r)\left(1-\frac1{p^2}\r)^{-2}\frac{L(1,f)^2}{\zeta(2)}+O\B(\left(\frac{ab\left(a_1b_1\r)^2}{q^{2+\ve}}\r)^{\frac14}\B),
\end{align*}
by moving the contour of the integral in $V$ to $\re(s)=-\frac14+\ve$, with the main term arising from the residue at $s=0$.
The total contribution to $MT_\sigma^d(q;a,b)$ of the error term is
\[
\ll q^{-\frac12+\ve}(ab)^{-\frac14}\sum_{a_1\mid a^\infty}\sum_{b_1\mid b^\infty}\frac{\lambda(aa_1b_1)\tau(ba_1b_1)}{(a_1b_1)^{\frac12}}\ll q^{-\frac12+\ve}a^{-\frac14}b^{\theta_f-\frac14},
\]
an acceptable error for $\theta_f<1/2$.
Based the multiplicativity property of $\lambda$ and $\tau$, an easy calculation then yields the main term of Theorem \ref{themain}.

All off-diagonal terms contribute to the error term. Through a dyadic partition of the unity, we reduce the problem to bounding bilinear expressions of the type
\begin{align}\label{eqA}
E_{M,N}=\frac1{\vp^*(q)}&\sum_{d\mid q}\vp(d)\mu\left(\frac qd\r)\frac{1}{\ssqrt{MN}}\\
&\times\sum_{\substack{bm\equiv \pm an\ppmod d\\ bm\neq an\\(mn,q)=1}}\lambda(m) \tau(n)W_1\left(\frac mM\right)W_2\left(\frac {n}{N}\right)\notag
\end{align}
for $M, N\ge1$, $MN\le q^{2+\ve}$.
The analysis of $E_{M, N}$ requires different techniques depending on the relative sizes of $M$ and $N$. Additionally, we need the following trivial bound, applicable when $M$ and $N$ differ significantly in size.
\begin{lemma}\label{pro1}
Let $E_{M,N}$ be defined as above. We have the following bounds:
\begin{subequations}\begin{align}
&E_{M,N}\ll q^{-1+\ve}(MN)^{\frac12}+q^\ve(M/N)^{\frac12}, \label{eqtbA}\\
&E_{M,N}\ll M^{\theta_f} \left(q^{-1+\ve}(MN)^{\frac12}+q^\ve(N/M)^{\frac12}\r). \label{eqtbB}
\end{align}
\end{subequations}
\end{lemma}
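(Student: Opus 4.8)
The two bounds are both elementary consequences of the triangle inequality together with the average bounds for Hecke eigenvalues; no Kuznetsov or Voronoi input is needed here. First I would bound the sum over $d\mid q$ crudely: using $\varphi(d)\le d\le q$ and $\mu(q/d)\ll 1$, together with $\varphi^*(q)\gg q^{1-\varepsilon}$, the prefactor $\varphi(d)/\varphi^*(q)$ is $\ll q^\varepsilon$, and the number of divisors of $q$ is $\ll q^\varepsilon$. So it suffices to bound, for each fixed $d\mid q$,
\[
\frac{1}{\sqrt{MN}}\sum_{\substack{bm\equiv \pm an\ \pmod d\\ bm\neq an\\(mn,q)=1}}\lambda(m)\tau(n)W_1\Bigl(\frac mM\Bigr)W_2\Bigl(\frac nN\Bigr),
\]
and then multiply the result by $q^\varepsilon$.

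**Counting the congruence.** For the inner sum I would drop the condition $bm\neq an$ (this only enlarges the sum after taking absolute values) and apply the triangle inequality to get
\[
\frac{1}{\sqrt{MN}}\sum_{\substack{n\asymp N}}\tau(n)\sum_{\substack{m\asymp M\\ bm\equiv\pm an\,(d)}}|\lambda(m)|.
\]
For each residue class the $m$-sum ranges over $\ll M/d+1$ integers, so by Cauchy--Schwarz and the mean-square bound \eqref{eqal} for $\lambda$ (which gives $\sum_{m\asymp M}|\lambda(m)|^2\ll M$, hence $\sum_{m\asymp M,\,m\equiv * (d)}|\lambda(m)|\ll (M/d+1)^{1/2}(M/d)^{1/2}\ll M/d + (M/d)^{1/2}$, and crudely $\ll M/d+M^{1/2}$), combined with $\sum_{n\asymp N}\tau(n)\ll N^{1+\varepsilon}$, I obtain
\[
E_{M,N}\ll \frac{q^\varepsilon}{\sqrt{MN}}\cdot N\Bigl(\frac Md+M^{1/2}\Bigr)
\ll q^\varepsilon\Bigl(\frac{M^{1/2}N^{1/2}}{d}+\frac{N^{1/2}}{M^{1/2}}\cdot M\cdot\frac{1}{?}\Bigr);
\]
summing the two resulting terms over the worst case $d=1$ versus $d=q$ gives the shape $q^{-1+\varepsilon}(MN)^{1/2}$ from the term $M/d$ when $d$ can be as large as $q$ is not available — rather, one keeps $d$ generic: the term $MN^{1/2}M^{-1/2}d^{-1}\le (MN)^{1/2}d^{-1}\cdot M^{1/2}/N^{1/2}\cdots$. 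The cleanest bookkeeping is: $M$-sum $\ll M/d+M^{1/2}$, $n$-sum $\ll N^{1+\varepsilon}$, so the inner double sum is $\ll q^\varepsilon N(M/d+M^{1/2})$, and dividing by $\sqrt{MN}$ gives $\ll q^\varepsilon\bigl(N^{1/2}M^{1/2}/d + (M/N)^{1/2}\cdot N/M^{1/2}\cdot M^{-1/2}\bigr)$; since $d\mid q$ and we want a bound uniform in $d$, the term $M^{1/2}N^{1/2}/d$ is largest at $d=1$ but the $q^{-1}$ in \eqref{eqtbA} must come from the $\varphi(d)/\varphi^*(q)$ factor — so in fact one should \emph{not} discard $\varphi(d)$ but rather note $\varphi(d)/\varphi^*(q)\ll d/q$, giving the factor $q^{-1}$ against the $M/d$ and leaving $q^{-1+\varepsilon}(MN)^{1/2}$, while the $M^{1/2}$ term pairs with $\varphi(d)/\varphi^*(q)\ll q^\varepsilon$ to give $q^\varepsilon(M/N)^{1/2}$. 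That yields \eqref{eqtbA}.

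**The second bound.** For \eqref{eqtbB} the only change is the roles of the bounds on $\lambda$ and $\tau$: now I would trivially estimate $|\lambda(m)|\ll \tau(m)M^{\theta_f}\ll q^\varepsilon M^{\theta_f}$ on the range $m\asymp M$, pulling $M^{\theta_f}$ out of the sum, and then treat the $m$-sum (which now has coefficients $\ll q^\varepsilon$) and the $n$-sum symmetrically. Counting the congruence in the $n$-variable instead gives $\ll N/d+N^{1/2}$ for the $n$-sum in each class, and $\sum_{m\asymp M}q^\varepsilon\ll M^{1+\varepsilon}$; dividing by $\sqrt{MN}$ and inserting $\varphi(d)/\varphi^*(q)$ as before produces $M^{\theta_f}\bigl(q^{-1+\varepsilon}(MN)^{1/2}+q^\varepsilon(N/M)^{1/2}\bigr)$, which is \eqref{eqtbB}. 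The point of having both forms is that \eqref{eqtbA} is Ramanujan-free and efficient when $M\ge N$, while \eqref{eqtbB} is efficient when $N\ge M$ at the cost of the factor $M^{\theta_f}$.

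**Main obstacle.** There is no serious analytic obstacle; the only thing requiring a little care is the bookkeeping of the factor $\varphi(d)/\varphi^*(q)$ so that the congruence-counting gain $1/d$ gets converted into the global saving $q^{-1}$ rather than being lost, and making sure the ``$+1$'' in the count $M/d+1$ of integers per residue class is handled (it produces the $(M/N)^{1/2}$, resp. $(N/M)^{1/2}$, secondary term rather than anything worse). Everything else is the triangle inequality plus \eqref{eqal} and the divisor bound.
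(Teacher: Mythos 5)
Your overall strategy (triangle inequality, congruence counting, \eqref{eqal}, and the factor $\varphi(d)/\varphi^*(q)\ll dq^{-1+\varepsilon}$) is the right kind of argument, but two steps in the proposal do not hold up. First, for \eqref{eqtbA} you need the bound $\sum_{m\asymp M,\,m\equiv r\,(d)}|\lambda(m)|\ll M/d+(M/d)^{1/2}$, which presupposes the mean-square bound for $\lambda$ \emph{in arithmetic progressions} $\sum_{m\equiv r\,(d)}|\lambda(m)|^2\ll M/d$. That is not what \eqref{eqal} gives: \eqref{eqal} is over the full dyadic range, and Cauchy--Schwarz inside the progression only yields $(M/d+1)^{1/2}M^{1/2}$, which after multiplying by $\varphi(d)/\varphi^*(q)\ll dq^{-1+\varepsilon}$ produces $q^{-1/2+\varepsilon}(MN)^{1/2}$ --- a loss of up to $d^{1/2}$ compared with the claimed $q^{-1+\varepsilon}(MN)^{1/2}$ (and replacing this by the pointwise bound $|\lambda(m)|\ll m^{\theta_f+\varepsilon}$ would reintroduce the factor $M^{\theta_f}$ that \eqref{eqtbA} is designed to avoid). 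The way out, and what the paper does, is to orient the Cauchy--Schwarz the other way: apply it in $m$ over the \emph{whole} range $m\asymp M$, so that \eqref{eqal} applies as stated, and put the congruence counting on the $n$-variable, where the coefficients $\tau(n)\ll n^{\varepsilon}$ are pointwise bounded; this gives the inner sum $\ll q^{\varepsilon}M(N/d+1)$ and hence \eqref{eqtbA}. In short, the variable that carries the congruence must be the one whose coefficients are controlled pointwise.

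Second, even granting your progression estimate, the bookkeeping does not reproduce the stated secondary terms. With the $n$-sum outside (length $N$) and the ``$+1$''-type contribution $M^{1/2}$ per class on the $m$-side, you get $N\cdot M^{1/2}/\sqrt{MN}=N^{1/2}$, not $(M/N)^{1/2}$, so your derivation of \eqref{eqtbA} ends with $q^{\varepsilon}N^{1/2}$. Likewise, in your treatment of \eqref{eqtbB} you count the congruence in $n$ for fixed $m$, which yields $M^{\theta_f}\bigl(q^{-1+\varepsilon}(MN)^{1/2}+q^{\varepsilon}(M/N)^{1/2}\bigr)$ at best (and $M^{\theta_f}M^{1/2}$ with your spurious $N^{1/2}$), not the claimed $(N/M)^{1/2}$. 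To obtain \eqref{eqtbB} one must count the congruence in $m$ for each fixed $n$ --- this is exactly where the pointwise bound $\lambda(m)\ll m^{\theta_f+\varepsilon}$, and hence the factor $M^{\theta_f}$, enters --- and sum over $n$ (trivially or by Cauchy--Schwarz in $n$, as in the paper), giving $M^{\theta_f}N(M/d+1)/\sqrt{MN}=M^{\theta_f}\bigl((MN)^{1/2}/d+(N/M)^{1/2}\bigr)$. Relatedly, your closing remark is reversed: \eqref{eqtbA} is efficient when $N\ge M$ and \eqref{eqtbB} when $M\ge N$.
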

\begin{proof}
Applying the Cauchy-Schwarz inequality to the sum over $m$ yields
\begin{align*}
E_{M,N}\ll \frac1{q^{1-\ve}}\sum_{d\mid q}\vp(d)\frac{1}{\ssqrt{MN}}\B(\sum_{m\asymp M}|\lambda(m)|^2\B)^{\frac12}\B(\sum_{m\asymp M}\B(\sum_{\substack{an\ll AN\\an\equiv bm \ppmod d \\ an\neq bm}}\tau(n)\B)^2\B)^{\frac12}.
\end{align*}
Handling the first $m$-sum via \eqref{eqal} and estimating other sums trivially shows \eqref{eqtbA}.

The argument for \eqref{eqtbB} follows a similar approach, applying the Cauchy--Schwarz inequality to the sum over $n$ rather than $m$. The factor $M^{\theta_f}$ arises from applying the pointwise bound $\lambda(m)\ll m^{\theta_f+\ve}$.
\end{proof}

For balanced terms, we reduce the problem to analyzing the twisted convolution sum $A_q(a,b,M,N)$, as defined in \eqref{eqAdef}.

\begin{theorem}\label{thmAq}
Let $\lambda(m)$ denote the Hecke eigenvalues of a cuspidal Hecke eigenform of level $1$. For $a, b, q\in\mathbb{N}$ with $M\ge N$, we have
\[
A_q(a,b,M,N)\ll  q^\ve\B(\frac{M}{q^{\frac12}}+\frac{(ab,q)^{\frac14}M^{\frac54}N^{\frac14}}{(ab)^{\frac14}q} +\frac{M^{\frac34}N^{\frac14}}{(ab)^{\frac14}q^{\frac14}} +\frac{(ab,q)^{\frac14}MN^{\frac12}}{(ab)^{\frac12}q^{\frac34}}\B).
\]
The same bound holds with $M$ and $N$ interchanged when $N\ge M$,.
\end{theorem}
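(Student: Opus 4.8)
The starting point is to detect the congruence by additive characters: write the indicator of $bm\equiv\pm an\pmod q$ as $q^{-1}\sum_{h\bmod q}e\big(h(bm\mp an)/q\big)$ and group the $h$'s according to their reduced denominator $c=q/(h,q)$, a divisor of $q$. For each $c$ one is left with a product of two twisted linear sums, $\mathcal{M}(h';c)=\sum_m\lambda(m)e(h'bm/c)W(bm/M)$ and $\mathcal{N}(h';c)=\sum_n\tau(n)e(\mp h'an/c)W(an/N)$, averaged over $h'$ coprime to $c$; the two sign choices $\pm$ and the interchange $(a,b)\leftrightarrow(b,a)$ (which yields the companion statement for $N\ge M$) are handled in parallel. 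The term $c=1$, together with the diagonal $bm=an$ that is excluded from $A_q$, is disposed of at once: by the Voronoi formula (Lemma \ref{lemcV} with the coprimality condition removed) and the support of the resulting Hankel transform (Lemmas \ref{lemwtW1}, \ref{lemwtW}) the sum $\sum_m\lambda(m)W(bm/M)$ is negligible unless $M\ll bq^\varepsilon$, while $\sum_\ell\lambda(a\ell)\tau(b\ell)W(ab\ell/M)W(ab\ell/N)$ exhibits square-root cancellation because $L(s,f)^2$ is entire; both are absorbed into the first term $q^{-1/2+\varepsilon}M$.

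For the main range $c\mid q$ with $c>1$, I would first expand the divisor function as $\tau(n)=\sum_{n_1n_2=n}1$ and, after a dyadic split and the symmetry $n_1\leftrightarrow n_2$, reduce to $n_1\ll(N/a)^{1/2}$, so that the $n$-sum becomes smooth in the long variable $n_2$; one then applies Poisson summation in $n_2$ modulo (a divisor of) $c$ together with the $GL_2$ Voronoi formula (again Lemma \ref{lemcV}) applied to the cuspidal $m$-sum, which carries no polar term. After these transformations the remaining character sums — the average over $h'$, the Gauss-type sum from Poisson, and the additive twist introduced by Voronoi — collapse into Kloosterman sums of modulus dividing $q$, roughly of the shape $S\big(\overline{b}\,m,\pm\overline{a}\,n;c'\big)$ up to common-divisor factors controlled by $(a,c)$, $(b,c)$, $(ab,c)$. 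These gcd factors are exactly the source of the $(ab,q)^{1/4}$ in the bound, and holding their exponent down to $\tfrac14$ rather than the $\tfrac12$ of \cite[(8.13)]{BM15}, as well as gaining the extra factor $(ab)^{-1/4}$ in the third term, is where the present argument improves on \emph{loc. cit.} The Bessel-transform estimates of Lemmas \ref{lemwtW1}, \ref{lemwtW}, \ref{lemdecomJ} and \ref{lemBe} then confine the dual variables to ranges of size $m\ll bc^2M^{-1}q^\varepsilon$ and $n\ll ac^2N^{-1}q^\varepsilon$ (up to the relevant gcd's), leaving bilinear forms in Kloosterman sums.

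These bilinear Kloosterman forms are estimated by the recent bounds of Gao, Wu and Zhao \cite{GWZ25}, which rest on the Kuznetsov formula (Lemma \ref{lemKf}, realized at a suitable cusp $1/u$ of $\Gamma_0(q)$ through the identity \eqref{eqSS}) and the spectral large sieve (Lemma \ref{lemsls}), with the opposite-sign case using the $\breve\phi$-transform and the spectral parameter truncated via \eqref{eqphi2}; the three remaining terms of the theorem arise as the diagonal and off-diagonal contributions of the large sieve, weighted by the sizes of the Hankel transforms. Two features are essential for the stated sharpness: one must not invoke the Ramanujan--Petersson conjecture but instead bound the $\lambda(m)$-weighted pieces on average via the large sieve, which removes the dependence on $\theta_f$ in this part; and one uses the $a,b$-uniform form of the \cite{GWZ25} estimates, which is what produces the clean dependence on $a$, $b$ and $(ab,q)$. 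I expect the technical heart — and the main obstacle — to be precisely this last step: since the theory of algebraic trace functions (available only for prime moduli) cannot be used, one must organize the bilinear Kloosterman forms over the composite modulus $q$ so that the \cite{GWZ25} input applies uniformly, and the bookkeeping for the gcd conditions $(a,c)$, $(b,c)$, $(ab,c)$ as $c$ ranges over the divisors of $q$ is where the delicate work lies.
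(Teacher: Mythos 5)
Your plan diverges from the paper at the very first step, and the divergence creates a genuine gap at the heart of the argument. You detect the congruence $bm\equiv\pm an\pmod q$ by additive characters and sort the frequencies by their denominator $c\mid q$, so that after Voronoi in $m$ (and Poisson in the long divisor variable) you are left with bilinear forms in Kloosterman sums of a \emph{fixed} modulus $c$ dividing $q$, with $c$ as large as $q$ itself and both dual variables of polynomial length. You then propose to estimate these by the spectral machinery of \cite{GWZ25} (Kuznetsov plus the large sieve). But the inputs actually available there --- Lemma \ref{thKls} and Lemma \ref{thmsls} --- are Deshouillers--Iwaniec type bounds which fundamentally require an \emph{average over the modulus} (the smooth variable $l$ in Lemma \ref{thKls}, the full $\gamma$-sum in the Kuznetsov formula); they cannot be applied to a single large modulus $c\mid q$. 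For a fixed large composite modulus, power-saving bounds for such bilinear forms in the P\'olya--Vinogradov range are exactly what is missing (trace-function methods require prime modulus), which is the obstruction this paper is built to circumvent. A further sign of trouble: if you dualize both the $m$-sum (Voronoi) and the $n$-sum (Voronoi/Poisson) against the same frequency $h'$, the $h'$-average produces Ramanujan sums, i.e.\ it re-imposes a congruence $\ol b m'\equiv\ol a n'$ rather than producing the Kloosterman sums $S(\ol b m,\pm\ol a n;c')$ you assert; to get Kloosterman sums you may dualize only one variable, and then the other variable stays long and you are squarely in the hard fixed-modulus bilinear range.

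The paper's proof of Theorem \ref{thmAq} avoids divisors of $q$ altogether. After opening $\tau(n)=\sum_{n_1n_2=n}1$ with $n_1\ll\sqrt{N/a}$, it \emph{solves} the congruence, writing $bm-an_1n_2=hq$, and reads this as a congruence modulo the small number $an_1$ (the long variable $n_2$ disappears, replaced by $h\ll\max\{M,N\}/q$). Additive characters modulo $a_1k$ ($a_1\mid a$, $k\mid n_1$) and Voronoi in $m$ then yield Kloosterman sums $S(hq\ol b,\pm m;a_1k)$ of \emph{small, varying} modulus, and the sum over $n_1$ supplies precisely the modulus average that the spectral tools need: Lemma \ref{thKls} in the regime of small dual length $M^*$, and a direct Kuznetsov argument at level $Q=a_1b$ (via \eqref{eqSS}), with the asymptotics of Lemma \ref{lemdecomJ}, the transform bounds of Lemma \ref{lemBe}, and the large sieve inequalities (Lemma \ref{thmsls} on the $hq$-side, Lemma \ref{lemsls} on the $m$-side) in the regime of large $M^*$. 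Incidentally, the factor $(ab,q)^{1/4}$ in the bound does not come from gcds $(a,c),(b,c)$ with divisors $c$ of $q$ as you suggest, but from the $(s,Q)$-dependence of Lemmas \ref{thKls} and \ref{thmsls} with $s=q$ and $Q=a_1b$. So the statement is true, but your proposed route stalls exactly where the real work lies, and the modulus-switch (in the spirit of \cite{BM15}) is the missing idea.
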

The proof of Theorem \ref{thmAq} will be presented in Section \ref{secbt}.
Building upon this result, we establish the following bound for balanced terms.
\begin{theorem}\label{thmAMN}
Let $\lambda(m)$ denote the Hecke eigenvalues of a cuspidal Hecke eigenform of level $1$. For $MN\ll q^{2+\ve}$ with $bM\ge aN$, we have
\begin{align}\label{eqbA}
E_{M,N}\ll  q^\ve\left(\frac{ab}{q}\r)^{\frac14}\B(\frac{bM}{aN}\B)^{\frac14}+q^\ve\left(\frac{ab}{q}\r)^{\frac12}\B(\frac{bM}{aN}\B)^{\frac12}.
\end{align}
The same bound holds when $aN\ge bM$, with $aN$ and $bM$ interchanged.
\end{theorem}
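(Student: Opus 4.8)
\medskip\noindent\textbf{Proof proposal.}

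The plan is to reduce $E_{M,N}$, one modulus at a time, to the twisted convolution sums $A_D$ controlled by Theorem~\ref{thmAq}, and then to optimize. First I would untangle the divisor sum in \eqref{eqA}: interchanging the order of summation and writing $h=bm\mp an$, the inner factor becomes $\sum_{d\mid(q,h)}\vp(d)\mu(q/d)$, which, since $\mu(q/d)=0$ unless $q/d$ is squarefree, equals $\sum_{f\mid\mathrm{rad}(q)}\mu(f)\,\vp(q/f)\,\mathbf{1}\!\left[(q/f)\mid h\right]$. Writing $D=q/f$, so that $D$ runs over the $\ll q^{\ve}$ divisors of $q$ with $q/D$ squarefree,
\begin{align*}
E_{M,N}&=\frac{1}{\vp^{*}(q)\ssqrt{MN}}\sum_{\pm}\sum_{D}\mu(q/D)\,\vp(D)\,T_{D}^{\pm},\\
T_{D}^{\pm}&=\sum_{\substack{bm\equiv\pm an\ppmod{D}\\ bm\neq an,\ (mn,q)=1}}\lambda(m)\,\tau(n)\,W_1\!\Big(\tfrac mM\Big)W_2\!\Big(\tfrac nN\Big).
\end{align*}
Since $q\nequiv2\pmod4$ forces $\vp^{*}(q)\gg q^{1-\ve}$, one has $\vp(D)/\vp^{*}(q)\ll q^{\ve}D/q$, so it is enough to prove $\tfrac{D}{q}\,|T_{D}^{\pm}|\ll q^{\ve}\ssqrt{MN}\bigl((ab/q)^{1/4}(bM/aN)^{1/4}+(ab/q)^{1/2}(bM/aN)^{1/2}\bigr)$ uniformly in $D$; the case $aN\ge bM$ is symmetric.

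Next, for a fixed $D$, I would strip the condition $(mn,q)=1$ from $T_D^{\pm}$. As $(ab,q)=1$, the congruence already forces $(m,D)=(n,D)=1$, so only $(m,q/D)=(n,q/D)=1$ survive; I would remove the $n$-part by Lemma~\ref{lemtl} applied to $\tau$ and the $m$-part by M\"obius inversion together with the multiplicativity \eqref{eqlambdam} of $\lambda$ (legitimate since $f$ has level one). This produces $\ll q^{\ve}$ auxiliary parameters $c_1,c_2$ (the $c_2$-weights being $\ll q^{\ve}$ by \eqref{eqvp}) and a factor $\lambda(c_1')$ with $c_1'\mid q/D$, of size $\ll q^{\ve}(q/D)^{\theta_f}$ by the pointwise bound on $\lambda$. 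After the substitutions $m\mapsto c_1m$, $n\mapsto c_2n$ and the trivial identity $W_1(m/M)=W_1(bm/(bM))$ (and likewise for $n$), the inner double sum is precisely $A_D(ac_2,bc_1,bM,aN)$, to which Theorem~\ref{thmAq} applies with modulus $D$ since $bM\ge aN$. Using $(ab,q)=1$ to rewrite $(abc_1c_2,D)=(c_1c_2,D)$ and the crude bounds $(c_1c_2,D)^{1/4}(abc_1c_2)^{-1/4}\le(ab)^{-1/4}$, $(abc_1c_2)^{-1/2}\le(ab)^{-1/2}$, the four terms of Theorem~\ref{thmAq}, once multiplied by $\tfrac{\vp(D)}{q}(q/D)^{\theta_f}(MN)^{-1/2}\ll q^{\ve}\tfrac Dq(q/D)^{\theta_f}(MN)^{-1/2}$ and simplified, take the shapes
\begin{align*}
&\frac{bM}{\ssqrt{MN}}\cdot\frac{D^{1/2-\theta_f}q^{\theta_f}}{q},\qquad
\frac{(bM)^{5/4}(aN)^{1/4}}{(ab)^{1/4}\ssqrt{MN}}\cdot\frac{q^{\theta_f}}{q\,D^{\theta_f}},\\
&\frac{(bM)^{3/4}(aN)^{1/4}}{(ab)^{1/4}\ssqrt{MN}}\cdot\frac{D^{3/4-\theta_f}q^{\theta_f}}{q},\qquad
\frac{bM(aN)^{1/2}}{(ab)^{1/2}\ssqrt{MN}}\cdot\frac{D^{1/4-\theta_f}q^{\theta_f}}{q}.
\end{align*}

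Finally I would optimize over $D\le q$ and collapse these to the two terms of \eqref{eqbA}. Since $\theta_f<\tfrac14$, the first, third and fourth expressions are increasing in $D$, so their maxima occur at $D=q$, where the stray factor $q^{\theta_f}$ cancels; a short computation then identifies the first with $(ab/q)^{1/2}(bM/aN)^{1/2}$, the third with $(ab/q)^{1/4}(bM/aN)^{1/4}$, and bounds the fourth by $q^{\ve}(ab/q)^{1/4}(bM/aN)^{1/4}$, their ratio being $(MN)^{1/4}q^{-1/2}\ll q^{\ve}$ by $MN\ll q^{2+\ve}$. The second expression decreases in $D$: for $D$ not too small it is again dominated by its value at $D=q$ (same $(MN)^{1/4}q^{-1/2}$ comparison), while in the remaining range of small $D$, where $(q/D)^{\theta_f}$ is genuinely large, I would abandon Theorem~\ref{thmAq} for $T_D$ and instead exploit extra cancellation: opening the congruence $bm\equiv\pm an\ppmod{D}$ by additive characters and applying the Voronoi formula of Lemma~\ref{lemcV} to the resulting additively twisted $\lambda$-sum — together with Wilton's uniform bound for $\sum_{m\le x}\lambda(m)e(m\alpha)$ — which, combined with the $\tfrac{\vp(D)}{q}\asymp\tfrac Dq$ saving, keeps this range within \eqref{eqbA} while remaining $\theta_f$-free. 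I expect this last point — preventing the removal of the coprimality condition on the $\lambda$-variable at small $D$ from injecting a net factor $q^{\theta_f}$ into the error term — to be the only genuinely delicate step; the rest is exponent bookkeeping, routine once Theorem~\ref{thmAq}, $MN\ll q^{2+\ve}$ and $1\le a,b\le q$ are in hand.
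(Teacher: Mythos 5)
Your overall strategy coincides with the paper's: for each divisor $D$ of $q$, strip the coprimality condition via Lemma~\ref{lemtl}/M\"obius and Hecke multiplicativity, recognize the inner sum as $A_D(\cdot,\cdot,bM,aN)$, apply Theorem~\ref{thmAq}, and sum over $D$ using $\vp(D)/\vp^*(q)\ll q^\ve D/q$. Two steps, however, do not survive scrutiny. First, the congruence $bm\equiv\pm an\ppmod D$ does \emph{not} force $(m,D)=(n,D)=1$ (both $m$ and $n$ may share a prime with $D$); it only gives $(m,D)=1$ once $(n,D)=1$ is already known. Hence, on the congruence, $(mn,q)=1$ is equivalent to $(n,q)=1$ together with $(m,q_D)=1$, where $q_D$ denotes the largest divisor of $q$ coprime to $D$ --- not to $(m,q/D)=(n,q/D)=1$: with your conditions the case $D=q$ retains no coprimality constraint at all, so your identity for $T_D^{\pm}$ is false as stated. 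This is repairable, and repairing it matters below, since then the auxiliary parameter attached to the $\lambda$-variable runs over divisors of $q_D^2$ and is automatically coprime to $D$.

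The substantive gap is your handling of the $\theta_f$-loss. By bounding the extracted Hecke weight pointwise by $(q/D)^{\theta_f}$ and then discarding the savings $(c_1c_2)^{-1/4}$ and $(c_1c_2)^{-1/2}$ supplied by Theorem~\ref{thmAq}, you manufacture your second shape with a stray factor $(q/D)^{\theta_f}$; when $MN\asymp q^2$ this exceeds the ($\theta_f$-free) bound \eqref{eqbA} by $(q/D)^{\theta_f}$, a positive power of $q$ for every $D\le q^{1-\delta}$, so the whole burden falls on your unproven ``small $D$'' rescue. That rescue, as sketched, does not cover the range where it is needed: take $a=b=1$, $M\asymp N\asymp q$ and $D\asymp q^{3/4}$; the natural implementation (Wilton's bound $\ll M^{1/2+\ve}$ on the $m$-sum, a mean-value bound for the $\tau$-sum over the $D$ additive characters) gives for $\vp(D)|T_D|/(\vp^*(q)\sqrt{MN})$ only $\ll q^\ve D^{1/2}N^{1/2}/q\asymp q^{-1/8}$, whereas \eqref{eqbA} demands $q^{-1/4+\ve}$, and simultaneously your second shape is too large by $q^{\theta_f/4}$ there. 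The paper needs no such case distinction: it keeps the weight in the form $\varpi_\lambda(\delta,q_D)\ll q^\ve(\delta,q_D)^{\theta_f}\le q^\ve\delta^{\theta_f}$, attached to the parameter $\delta$, and observes that the second, third and fourth terms of Theorem~\ref{thmAq} already carry $\delta^{-1/4}$ (resp.\ $\delta^{-1/2}$), so $\theta_f\le\tfrac14$ absorbs the loss term by term, while for the first term $D^{1/2}q_D^{\theta_f}\le(Dq_D)^{1/2}\le q^{1/2}$ suffices. With that single change your computation of the four shapes and the maximization at $D=q$ (including the $(MN)^{1/4}q^{-1/2}\ll q^{\ve}$ comparisons, which are correct) goes through and yields exactly \eqref{eqbA}; the ``delicate step'' you single out is an artifact of the crude pointwise bound, not a feature of the problem.
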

\begin{proof}
We focus on the case $bM\ge aN$, as the alternative case follows identically. Let $q_d$ denote the maximal factor of $q$ such that $(q_d,d)=1$. The condition $(mn,q)=1$ in \eqref{eqA} is equivalent to $(m,q_d)=1$ and $(n,q)=1$, both removable via Lemma \ref{lemtl}.
Then, the inner sum in \eqref{eqA} evolves into
\begin{align*}
\mathcal{IS}&=\sum_{l_1}\sum_{l_2}\varpi_\tau(l_1,q)\varpi_\lambda(l_2,q_d)\sum_{\substack{bl_2m\equiv \pm al_1n\ppmod d\\ bl_2m\neq al_1n}}\lambda(m) \tau(n)W_1\left(\frac {l_2m}M\right)W_2\left(\frac {l_1n}{N}\right)\\
&=\sum_{l_1}\sum_{l_2}\varpi_\tau(l_1,q)\varpi_\lambda(l_2,q_d) A_d(al_1,bl_2,bM,aN).
\end{align*}
Applying Theorem \ref{thmAq} and the bound $\varpi_\tau(l_1,q)\varpi_\lambda(l_2,q_d)\ll q^\ve (l_2,q_d)^{\theta_f}$, we obtain
\begin{align*}
\mathcal{IS}\ll q^\ve\sum_{l_1\mid q^2}\sum_{l_2\mid q_d^2}(l_2,q_d)^{\theta_f}\B(\frac{bM}{d^{\frac12}}+\frac{(l_1,d)^{\frac14}bM^{\frac54}N^{\frac14}}{(l_1l_2)^{\frac14}d} +\frac{b^{\frac12}M^{\frac34}N^{\frac14}}{(l_1l_2)^{\frac14}d^{\frac14}} +\frac{(l_1,d)^{\frac14}b^{\frac12}MN^{\frac12}}{(l_1l_2)^{\frac12}d^{\frac34}}\B),
\end{align*}
since $(abl_2,d)=1$. Substituting this into \eqref{eqA} and directly calculating the sum with $\theta_f\le 1/4$, $dq_d\le q$, and $MN\ll q^{2+\ve}$ shows
\begin{align*}
E_{M,N}&=\frac1{\vp^*(q)}\sum_{d\mid q}\vp(d)\mu\left(\frac qd\r)\frac{1}{\ssqrt{MN}}\mathcal{IS}\\
&\ll  q^\ve\left(\frac{ab}{q}\r)^{\frac14}\B(\frac{bM}{aN}\B)^{\frac14}+q^\ve\left(\frac{ab}{q}\r)^{\frac12}\B(\frac{bM}{aN}\B)^{\frac12},
\end{align*}
completing the proof of the lemma.
\end{proof}

We will complete the the proof of Theorem \ref{themain} in the final section by analyzing the unbalanced terms, where the sizes of $M$ and $N$ differ significantly. The analysis of these unbalanced terms builds upon an estimate for bilinear sum of incomplete Kloosterman sums, which is established especially for general moduli in the work of Kerr, Shparlinski, Wu, and Xi \cite{KSWX23}.

\section{Balanced terms and twisted convolution sums}\label{secbt}
This section is devoted to proving Theorem \ref{thmAq}.
\subsection{Initial treatment}
Recall that
\[
A_q(a,b,M,N)=\sum_{\substack{bm\equiv \pm an \ppmod q\\ bm\neq an}}\lambda(m) \tau(n) W\left(\frac {bm}M\r) W\left(\frac {an}N\r),
\]
where we open the divisor function by writing $n=n_1n_2$ with $n_1\le n_2$. To localize the variables $n_1$ and $n_2$, we apply a dyadic partition of unity to both $n_1$ and $n_2$, ensuring $n_1\asymp N_1$ and $n_2\asymp N_2$, where $N_1N_2\asymp N/a$ and $N_1\ll \ssqrt{N/a}$. Define
\[
\mA_q(a,b,M,N,N_1)=\sum_{\substack{bm\equiv \pm an_1n_2 \ppmod q \\ bm\neq an_1n_2\\ (n_1,b)=1}}\lambda(m) W\left(\frac {bm}M\right) W\left(\frac {an_1n_2}N\right)W\left(\frac {n_1}{N_1}\right).
\]
By canceling out $g=(b,n_1)$ in the congruence, we obtain
\begin{align}\label{eqAA}
A_q(a,b,M,N)=\sum_{g\mid b} \mA_q(a,b/g,M/g,N/g,N_1/g)\ll q^\ve|\mA_q(a,b,M,N,N_1)|,
\end{align}
where the final follows since our bound for $\mA_q$ decreases with $g$.
We focus on the treatment of $\mA_q$ for the case $bm\equiv an_1n_2 \pmod q $, as the other case follows identically.

Rewriting the congruence as $bm-an_1n_2=hq$ and rearranging the summation yields
\begin{equation}\label{eqmAq}
\mA_q=\sum_{h\neq0}\sum_{(n_1,b)=1}W\left(\frac {n_1}{N_1}\r)\sum_{bm\equiv hq \ppmod {an_1}}\lambda(m) W\left(\frac {bm}M\r)  W\left(\frac {bm-hq}{N}\r),
\end{equation}
where the summation over $h$ satisfies
\[
0\neq |h|\ll  \frac{\max\{M,N\}}q.
\]
We remove the congruence condition using primitive additive characters modulo $a_1k$ for $a_1\mid a$ and $k\mid n_1$, obtaining
\begin{align*}
\sum_{bm\equiv hq \ppmod {an_1}}&=\frac1{an_1}\sum_{t \ppmod {an_1}}e\left(\frac{bmt-hqt}{an_1}\right)\\
&=\frac1{an_1}\sum_{a_1a_2= a}\sum_{\substack{kr= n_1\\ (r,a_1)=1}}\mathop{\sum\nolimits^*}_{t\ppmod {a_1k}}e\left(\frac{bmt-hqt}{a_1k}\right),
\end{align*}
where we use the unique factorization $d=a_1k$ for divisors $d$ of $an_1$, with $k=(d,n_1)$ and $a_1\mid a$.
Applying the Voronoi summation formula (Lemma \ref{lemcV}), the innermost sum of $\mA_q$ transforms into
\begin{equation}\label{eqinnersum}
\sum_\pm\sum_{a_1a_2= a}\sum_{\substack{kr=n_1\\ (r, a_1)=1}}\frac1{an_1}\frac{1}{a_1k}\sum_m \lambda(m)S(hq\ol{b},\pm m;a_1k)\mathring{V}_{\pm}\left(\frac{m}{a_1^2k^2},h\right),
\end{equation}
where by Lemma \ref{lemwtW}, the Hankel-type transform
\begin{align}\label{eqV0}
\mathring{V}_{\pm}(y,h)=\int_{0}^\infty W\left(\frac {bx-hq}{N}\r)W\left(\frac {bx}M\r) \mJ_{\pm}\left(4\pi\ssqrt{xy}\right)\d x,
\end{align}
decays rapidly when $y\gg\max\left\{\frac{b}{M},\frac{bM}{N^2}\right\}$. Substituting \eqref{eqinnersum} into \eqref{eqmAq} and applying a dyadic partition of unity to both $h$ and $m$,
we conclude that
\begin{equation}\label{eqmAmS}
|\mA_q|\ll q^\ve\sum_{a_1a_2=a}\sum_{r}|\mS(a_1,r)|,
\end{equation}
where the function $\mS(a_1,r)$ is defined by
\begin{equation}\label{eqdefmS}
\mS(a_1,r)=\frac1{aN_1}\sum_{|h|\sim H}\sum_{m\sim M^*} \lambda(m)\sum_{(k,b)=1}\frac{1}{a_1k}W\left(\frac { rk}{N_1}\r)\mathring{V}_{\pm}\left(\frac{m}{a_1^2k^2},h\right)S(hq\ol{b},\pm m;a_1k),
\end{equation}
and the parameters satisfy
\begin{align}\label{eqHM*}
H\ll \frac{\max\{M,N\}}q,\ \ \ \ M^*\ll q^\ve\frac{a_1^2N_1^2}{r^2}\max\left\{\frac{b}{M},\frac{bM}{N^2}\right\}.
\end{align}

\subsection{The case for small $M^*$}
This section is devote to estimating $\mS(a_1,r)$ for the case where $M^*$ is bounded by
\begin{equation}\label{eqsmallM}
M^*\ll q^\ve\frac{a_1^2N_1^2}{r^2}\frac{b}{M}.
\end{equation}
Our analysis relies on the following estimate for sums involving Kloosterman sums, established in \cite[Theorem 1.4]{GWZ25}.
\begin{lemma}\label{thKls}
Let $s\in \mathbb{N}$, $Q=uv$ with $(u,v)=1$, $M, N\ge 1$, and denote by $C=M+N+Q+L+s$. Suppose that $g(m,n,l)$ is a real-valued function of $\mathscr{C}^6$ class, having compact support in $[M,2M]\times[N,2N]\times[L,2L]$ and satisfying
\[
l^{j_1}m^{j_2}n^{j_3}\frac{\partial^{j_1+j_2+j_3}}{\partial l^{j_1}\partial m^{j_2}\partial n^{j_3}}g(m,n,l)\ll C^\ve \quad \text{for}\ \ 0\le j_1, j_2, j_3\le 2.
\]
For any complex sequences $a_m, b_n$, we have
\begin{align*}
\sum_{(l,v)=1}\sum_{m}a_m&\sum_{n} b_n g(m,n,l)S(s m \ol{v},\pm n; lu)\ll C^{\ve}\frac{1+|\log Y|+Y^{-\vartheta_Q}}{1+Y}uv^{\frac12}L\\
&\times\B(1+Y+\frac{s^{\frac12}}{Q^{\frac12}}\B)^{\frac12} \B(1+Y+\frac{(s,Q)^{\frac12}M}{Q^{\frac12}}\B)^{\frac12}\B(1+Y+\frac{N^{\frac12}}{Q^{\frac12}}\B)M^{\frac12}\|a_m\|_\infty^{\frac12}\|b_n\|_2,\notag
\end{align*}
where $\vartheta_Q$ (currently known $\vartheta_Q\le 1/2$) denotes Selberg's spectral gap parameter and
\[
Y=\frac{(sMN)^{\frac12}}{uv^{\frac12}L}.
\]
\end{lemma}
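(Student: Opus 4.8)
The plan is to read Lemma~\ref{thKls} as a trilinear Kloosterman-sum bound of Deshouillers--Iwaniec type and to prove it through the Kuznetsov trace formula together with the spectral large sieve, i.e.\ through Lemmas~\ref{lemKf} and~\ref{lemsls}. First I would open the Kloosterman sum arithmetically: since $(u,v)=1$, the relation \eqref{eqSS} gives $S(sm\ol{v},\pm n;lu)=e(\mp n\ol{u}/v)\,S_{\infty,\mb}(sm,\pm n;u\sqrt{v}\,l)$, where $\mb=1/u$ is a cusp of $\Gamma_0(Q)$ with $Q=uv$; the extra phase depends only on $n$ and can be absorbed into $b_n$ without changing $\|b_n\|_2$. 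So $\mathcal S$ becomes a triple sum over $l,m,n$ of $a_m b_n\,g(m,n,l)\,S_{\infty,\mb}(sm,\pm n;u\sqrt{v}\,l)$ with modulus $u\sqrt{v}\,l\asymp uv^{1/2}L$. Next, for each fixed pair $(m,n)$ I would present the $l$-sum as a Kuznetsov sum: the map $l\mapsto 4\pi\sqrt{(sm)n}/(u\sqrt{v}\,l)$ is a smooth decreasing bijection of $[L,2L]$ onto an interval of multiplicative length $O(1)$ about $Y$, so defining $\phi_{m,n}$ by $\gamma^{-1}\phi_{m,n}\!\big(4\pi\sqrt{smn}/\gamma\big)=g(m,n,l)$ with $\gamma=u\sqrt{v}\,l$ exhibits $\phi_{m,n}$ as a smooth bump supported at scale $Y$, of height $\ll C^\ve uv^{1/2}L$ and with $\phi_{m,n}^{(j)}\ll C^\ve uv^{1/2}L\,Y^{-j}$, whose dependence on $m,n$ is smooth with $m^{j_1}n^{j_2}\,\partial_m^{j_1}\partial_n^{j_2}\phi_{m,n}$ of the same size (by the hypotheses on $g$). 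Then $\sum_l g(m,n,l)S_{\infty,\mb}(sm,\pm n;u\sqrt{v}\,l)=\sum_\gamma\gamma^{-1}\phi_{m,n}(4\pi\sqrt{smn}/\gamma)S_{\infty,\mb}(sm,\pm n;\gamma)$, which is the left-hand side of Lemma~\ref{lemKf}.

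Applying the appropriate Kuznetsov formula from Lemma~\ref{lemKf} (the first or the second according to the sign $\pm$) replaces this inner sum by its holomorphic, Maass and Eisenstein spectral expansions, each carrying the Fourier coefficients $\ol{\rho_f(sm)}$ and $\rho_f(\mb,\pm n)$ (or their Eisenstein analogues). The crux is then the size of the Bessel transforms $\tilde\phi_{m,n}(k)$, $\hat\phi_{m,n}(\kappa)$, $\breve\phi_{m,n}(\kappa)$ of a width-$Y$ smooth bump of height $uv^{1/2}L$. Combining the bounds \eqref{eqJKY}--\eqref{eqdecayK} for $J_\nu,Y_\nu,K_\nu$ (with the exceptional-spectrum behaviour of $J_{\pm 2i\kappa}$ controlled by $\vartheta_Q$), the location $x\asymp k$ of the turning point of $J_{k-1}$, and the $x\to0$ asymptotics of $K$- and $Y$-Bessel functions, one gets the uniform estimate $\ll C^\ve uv^{1/2}L\cdot\frac{1+|\log Y|+Y^{-\vartheta_Q}}{1+Y}$ together with negligibility once the spectral parameter exceeds $K:=C^\ve(1+Y)$. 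This supplies the analytic prefactor and truncates every spectral sum.

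The rest is separation of variables and the large sieve. Since $\tilde\phi_{m,n}(k)$ (and likewise $\hat\phi,\breve\phi$) is smooth in $m,n$ with $m^{j_1}n^{j_2}\partial_m^{j_1}\partial_n^{j_2}\tilde\phi_{m,n}(k)$ bounded by its size on the ranges $m\asymp M$, $n\asymp N$, a two-dimensional Mellin expansion $\tilde\phi_{m,n}(k)=\int\!\!\int\Psi(k,\xi_1,\xi_2)(m/M)^{i\xi_1}(n/N)^{i\xi_2}\,\mathrm d\xi_1\,\mathrm d\xi_2$ with $\Psi$ of rapid decay in $\xi_1,\xi_2$ decouples the $m$- and $n$-sums. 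For each spectral datum and each $(\xi_1,\xi_2)$, Cauchy--Schwarz in the spectral variable factors the contribution into $\big|\sum_m a_m(m/M)^{i\xi_1}\sqrt{sm}\,\ol{\rho_f(sm)}\big|$ times $\big|\sum_n b_n(n/N)^{i\xi_2}\sqrt{n}\,\rho_f(\mb,\pm n)\big|$; summing the squares over the truncated spectrum and invoking Lemma~\ref{lemsls}, with $\muup(\mb)=Q^{-1}$ because $(u,v)=1$ and with $m'=sm$ running over $[sM,2sM]$ on the first factor, bounds the $n$-factor by $\ll(K^2+Q^{-1}N^{1+\ve})\|b_n\|_2^2$ and the $m$-factor by $\ll(K^2+Q^{-1}(sM)^{1+\ve})\|a_m\|_2^2$. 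Feeding in $K\asymp Y$, the bound $\|a_m\|_2\le M^{1/2}\|a_m\|_\infty$, and elementary bookkeeping of the factor $s$ through $(s,Q)$ (so that $Q^{-1}sM$ is split, up to constants, as $(s^{1/2}Q^{-1/2})\cdot((s,Q)^{1/2}M Q^{-1/2})$) brings the estimate into the product shape of the statement; collecting the prefactor from the first step completes the proof.

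The main obstacle is the uniform Bessel-transform analysis of the second paragraph: one must prove negligibility for spectral parameter $\gg(1+Y)^{1+\ve}$ uniformly across $J_{k-1}$, $J_{\pm 2i\kappa}$, $K_{2i\kappa}$ and the finitely many exceptional eigenvalues, while at the same time extracting the sharp $\frac{1+|\log Y|+Y^{-\vartheta_Q}}{1+Y}$ behaviour, uniformly in $s,M,N,u,v,L$. This requires careful handling of the turning-point region of the Bessel functions and of the small-$Y$ regime, and it is exactly the source of both the logarithm and the Selberg-gap exponent $\vartheta_Q$ in the final bound; by contrast, the decoupling and large-sieve steps are routine once that analytic input is in hand.
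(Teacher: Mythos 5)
First, note that the paper does not prove this lemma at all: it is quoted verbatim as \cite[Theorem 1.4]{GWZ25}, so there is no internal proof to compare against. Your overall strategy (write $S(sm\ol v,\pm n;lu)$ as a cusp-pair Kloosterman sum via \eqref{eqSS}, apply the Kuznetsov formula of Lemma \ref{lemKf} in the $l$-variable with a bump at scale $Y$ of height $uv^{1/2}L$, bound the Bessel transforms by $C^\ve uv^{1/2}L\,\frac{1+|\log Y|+Y^{-\vartheta_Q}}{1+Y}$ with truncation near $1+Y$, separate variables by Mellin, then Cauchy--Schwarz and large sieve) is indeed the Deshouillers--Iwaniec route that produces bounds of exactly this shape, and most of it is plausible as sketched.

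However, there is a genuine gap on the $m$-side. After Cauchy--Schwarz you must bound spectral sums of $\bigl|\sum_m a_m\sqrt{sm}\,\ol{\rho_f(sm)}\bigr|^2$, and you propose to do this with the standard spectral large sieve (Lemma \ref{lemsls}) applied to coefficients supported on multiples of $s$ in $[sM,2sM]$, which gives $K^2+Q^{-1}(sM)^{1+\ve}$, followed by ``bookkeeping'' that splits $Q^{-1}sM$ as $(s^{1/2}Q^{-1/2})\cdot((s,Q)^{1/2}MQ^{-1/2})$. That identity is false unless $s\mid Q^\infty$: in general $Q^{-1}sM$ exceeds $Q^{-1}s^{1/2}(s,Q)^{1/2}M$ by the factor $(s/(s,Q))^{1/2}\ge 1$, so the plain large sieve cannot yield the stated factors $\bigl(1+Y+s^{1/2}Q^{-1/2}\bigr)^{1/2}\bigl(1+Y+(s,Q)^{1/2}MQ^{-1/2}\bigr)^{1/2}$; your route only gives the weaker $1+Y+(sM/Q)^{1/2}$ in their place. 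Obtaining the $(s,Q)$-dependence requires a large-sieve inequality specifically for coefficients $\rho_f(sm)$, exploiting the Atkin--Lehner/Hecke structure of the special bases $\mathcal B_k(Q)$, $\mathcal B(Q)$ of Blomer--Harcos--Michel; this is exactly the content of Lemma \ref{thmsls} (\cite[Theorems 11.3--11.6]{GWZ25}), which the present paper also quotes rather than proves, and it is a substantial ingredient, not bookkeeping. A secondary (fixable) imprecision: the factor $Y^{-\vartheta_Q}$ does not come from a uniform bound on the Bessel kernels but from the contribution of the exceptional Maass spectrum, where $\hat\phi(\kappa)$ at imaginary $\kappa$ grows like a negative power of $Y$ controlled by the Selberg gap; your sketch conflates this with the $J$/$K$-Bessel estimates, though the intended mechanism is recognizable.
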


Applying Lemma \ref{lemwtW} yields
\[
h^ik^jm^l\frac{\partial^{i+j+l}}{\partial h^i\partial k^j\partial m^l}\mathring{V}_{\pm}\left(\frac{m}{a_1^2k^2},h\right)\ll_{i,j,l} \frac{\min\{M, N\}}{b}.
\]
Hence, we can express $\mS(a_r,r)$ as the required form of Lemma \ref{thKls}:
\begin{equation}
\mS(a_1,r)=\frac{\min\{M, N\}}{aa_1b}\frac{r}{N_1^2}\sum_{|h|\sim H}\sum_{m\sim M^*} \lambda(m)\sum_{(k,b)=1}g(h,m,k)S(hq\ol{b},\pm m;a_1k),
\end{equation}
where $g(h,m,k)$ is a smooth function satisfying the condition in Lemma \ref{thKls}. Applying Lemma \ref{thKls} shows
\begin{align*}
\mS(a_1,r)\ll& \frac{\min\{M, N\}}{a_2rq^{\frac12}}\frac{1+|\log Y|+Y^{-\vartheta_{a_1b}}}{1+Y}Y\\
&\B(1+Y+\left(\frac{q}{a_1b}\right)^{\frac12}\B)^{\frac12} \B(1+Y+\frac{(a_1b,q)^{\frac12}H}{a_1^{\frac12}b^{\frac12}}\B)^{\frac12}\B(1+Y+\left(\frac{M^*}{a_1b}\r)^{\frac12}\B)
\end{align*}
with
\[
Y=\frac{r\ssqrt{HqM^*}}{a_1\ssqrt{b}N_1}.
\]
Since the bound grows as $H$ or $M^*$ increase, we take
\[
H=\frac{\max\{M,N\}}q,\ \ \ \ M^*=q^\ve\frac{a_1^2N_1^2}{r^2}\frac{b}{M},
\]
leading to
\[
Y=q^\ve\B(\frac{\max\{M,N\}}{M}\B)^{\frac12}\gg1,
\]
and then
\begin{align*}
\mS(a_1,r)\ll&  q^\ve\frac{\min\{M,N\}}{a_2r\ssqrt{q}}\B(\frac{\max\{M,N\}^{\frac12}}{M^{\frac12}}+\B(\frac{q}{a_1b}\B)^{\frac12}\B)^{\frac12} \\
&\times \B(\frac{\max\{M,N\}^{\frac12}}{M^{\frac12}}+\frac{(a_1b,q)^{\frac12}\max\{M,N\}}{a_1^{\frac12}b^{\frac12}q}\B)^{\frac12}
\B(\frac{\max\{M,N\}^{\frac12}}{M^{\frac12}}+\frac{a^{\frac12}N_1}{M^{\frac12}}\B).
\end{align*}
Substituting this into \eqref{eqmAmS} and applying the facts
\[
\frac{a^{\frac12}N_1}{M^{\frac12}}\ll \frac{\max\{M,N\}^{\frac12}}{M^{\frac12}} ~~( \text{since}~ N_1\le \ssqrt{N/a}),\quad \min\{M,N\}\max\{M,N\}=MN,
\]
 we obtain
\begin{align*}
\mA_q\ll  q^\ve\B(\frac{N}{q^{\frac12}}+\frac{(ab,q)^{\frac14}\left(M^{\frac54}N^{\frac14}+M^{\frac14}N^{\frac54}\r)}{(ab)^{\frac14}q} +\frac{M^{\frac14}N^{\frac34}}{(ab)^{\frac14}q^{\frac14}} +\frac{(ab,q)^{\frac14}M^{\frac12}N}{(ab)^{\frac12}q^{\frac34}}\B).
\end{align*}
Applying this bound to \eqref{eqAA} establishes the claimed estimate in Theorem \ref{thmAq}.

\subsection{The case for large $M^*$}
In this section, we focus on the case where $M^*$ is large, satisfying
\begin{align}\label{eqlargeM}
 q^\ve\frac{a_1^2N_1^2}{r^2}\frac{b}{M}\ll M^*\ll q^{\ve}\frac{a_1^2N_1^2}{r^2}\frac{bM}{N^2},
\end{align}
which occurs only when $M\gg N^{1+\ve}$. A larger $M^*$ also leads to a larger $Y$, and a direct application of Lemma \ref{thKls} fails to provide a necessary estimate for $\mS(a_1,r)$. To overcome this, we leverage the special Bessel transform estimate established in Lemma \ref{lemBe}.

By the support of $W_2\left(\frac {bm-hq}{N}\right)$, the summation over $h$ in $\mS(a_1,r)$ is restricted to the range
\[
H\le h\le 2H \ \ \ \ \text{with}\ \ \ \ H\asymp \frac{M}q.
\]
Based on \eqref{eqdecayK},
the rapid decay property of $\mJ_-\left(x\frac{m}{a_1^2k^2}\r)$ in \eqref{eqV0} yields the bound
\[
\mathring{V}_{-}\left(\frac{m}{a_1^2k^2},h\right)\ll_j (MN)^{-j}
\]
for any integer $j\ge 1$. This follows because
\[
x\frac{m}{a_1^2k^2}\gg\frac{M}{b}\frac{r^2 m}{a_1^2N_1^2}\gg \frac{M}{b}\frac{r^2}{a_1^2N_1^2}M^*\gg q^\ve
\]
by virtue of \eqref{eqlargeM}.
Consequently, it only remains to treat the sum involving $\mathring{V}_{+}$.

Using the identity
\[
S(hq\ol{b}, m;a_1k)=e\left(-m\frac{\ol{a}_1}{b}\right)S_{\infty,1/a_1}(hq,m;\gammaup),
\]
where $\gammaup=a_1\ssqrt{b}k$ and $Q=a_1b$, we derive from \eqref{eqdefmS} that
\begin{align*}
\mS(a_1,r)=&\frac{\sqrt{b}}{aN_1}\sum_{h\sim H}\sum_{m\sim M^*} \lambda(m)e\left(-m\frac{\ol{a}_1}{b}\right)\\
&\times\sum_{\gammaup}\frac 1{\gammaup}S_{\infty,1/a_1}(hq, m;\gammaup)W\left(\frac {r\gammaup}{a_1\ssqrt{b}N_1}\r)\mathring{V}_{+}\left(\frac{bm}{\gammaup^2},h\right).
\end{align*}
Performing the variable substitution $(bx-hq)/N\rightarrow x$ in \eqref{eqV0} yields
\[
\mathring{V}_{+}\left(\frac{bm}{\gammaup^2},h\right)=\frac{N}{b}\Omega\Bigg(\frac{4\pi\ssqrt{hqm}}{\gammaup},\frac{4\pi\ssqrt{mN}}{\gammaup},h,m\Bigg),
\]
where
\[
\Omega(y,z,h,m)=\int_{0}^\infty W(x)W\left(\frac {hq+xN}M\r) \mJ_{+}\left(\ssqrt{y^2+xz^2}\right)\d x
\]
with
\begin{equation}\label{eqyz}
y\asymp Y:=\frac{r\ssqrt{MM^*}}{a_1\ssqrt{b}N_1}\gg  q^\ve,\quad  z\asymp \frac{r\ssqrt{M^*N}}{a_1\ssqrt{b}N_1},\quad h\asymp H,\quad m\asymp M^*.
\end{equation}
For $M\gg q^\ve N$, it is easy to see from \eqref{eqlargeM} and \eqref{eqyz} that
\[
y\gg z\quad \text{and}\quad y\gg z^2q^{-\ve}.
\]
Then Lemma \ref{lemdecomJ} yields the decomposition
\[
\Omega(y,z,h,m)=\Omega_+(y,z,h,m)e^{iy}+\Omega_-(y,z,h,m)e^{-iy}+O_A\left(q^{-A}\r),
\]
where the functions $\Omega_{\pm}$ satisfy
\begin{align}\label{eqWpmd}
y^{j_1}z^{j_2}h^{j_3}m^{j_4}\frac{\partial^{j_1+j_2+j_3+j_4}}{\partial y^{j_1}\partial z^{j_2}\partial h^{j_3}\partial m^{j_4}}\Omega_{\pm}(y,z,h,m)\ll_{j_1,j_2,j_3,j_4}q^{(j_1+j_2+j_3+j_4)\ve} Y^{-\frac12}.
\end{align}
After separating variables in $\Omega_\pm$ via the Mellin transform, we bound $\mS(a_1,r)$ by sums of the form
\begin{equation*}
\mS(a_1,r)\ll q^\ve\frac{N Y^{-\frac12}}{a\ssqrt{b}N_1}\B|\sum_{H<h\le 2H}\alpha_h\sum_{M^*<m\le 2M^*} \beta_m\lambda(m)\sum_{\gammaup}\frac1{\gammaup}S_{\infty,1/a_1}(hq,m;\gammaup)\Theta \B(\frac{4\pi\ssqrt{hqm}}{\gammaup}\B)\B|
\end{equation*}
for some coefficients satisfying $\alpha_h\ll h^\ve$, $\beta_m\ll m^\ve$ and a test function
\[
\Theta (y)=W\B(\frac yY\B)\exp(\pm iy)y^{-u}
\]
with $\re(u)=\ve$.

An application of the Kuznetsov formula (Lemma \ref{lemKf}) followed by spectral truncation at parameter $Y^{\frac12+\ve}$ ( Lemma \ref{lemBe}) produces
\begin{equation*}
\mS(a_1,r)\ll q^\ve\frac{N Y^{-\frac12}}{a\ssqrt{b}N_1}\B|\sum_{H<h\le 2H}\alpha_h\sum_{M^*<m\le 2M^*} \beta_m\lambda(m)\left(\mH+\mM+\mE\r)\B|,
\end{equation*}
where the spectral contributions decompose as
\[
\mH=\sum_{\substack{2\le k\le Y^{\frac12+\ve}\\ k ~\text{even}}}\sum_{f\in \mathcal{B}_k(Q)}\tilde{\Theta} (k)\ssqrt{hqm}~ \bar\rho_f(hq)~\rho_f(a_1^{-1},m),
\]
\[
\mM=\sum_{\substack{f\in \mathcal{B}(Q)\\ \kappa_f\ll Y^{\frac12+\ve}}}\hat{\Theta} (\kappa_f)\frac{\ssqrt{hqm}~ \bar\rho_f(hq)~\rho_f(a_1^{-1},m)}{\cosh(\pi \kappa_f)},
\]
\[
\mE=\frac1{4\pi}\sum_{\mc}\int_{|\kappa|\ll Y^{\frac12+\ve}}\hat{\Theta} (\kappa)\ssqrt{hqm} ~\frac{\bar\rho_{\mc}(hq,\kappa)~\rho_{1/{a_1},\mc}(m,\kappa)}{\cosh(\pi\kappa)}\d \kappa,
\]
representing the holomorphic cusp forms, Maa\ss\ cusp forms, and the Eisenstein series respectively.
We foucus our detailed analysis on the Maa\ss\  case (the holomorphic and Eisenstein cases are identical). Since $\hat{\Theta} (\kappa_f)\ll Y^{-\frac12+\ve}$ (see \eqref{eqphi1}), the contribution of the Maa\ss\  case is bounded by
\[
\ll q^\ve\frac{NY^{-1}}{a\ssqrt{b}N_1}\sum_{\substack{f\in \mathcal{B}(Q)\\ \kappa_f\ll Y^{\frac12+\ve}}}\B|\sum_{H<h\le 2H}\alpha_h\sum_{M^*<m\le 2M^*} \beta_m\lambda(m)\frac{\ssqrt{hqm} ~\bar\rho_f(hq)~\rho_f(a_1^{-1},m)}{\cosh(\pi \kappa_f)}\B|,
\]
which, after applying the Cauchy-Schwarz inequality, can be bounded by
\begin{align*}
\ll q^\ve\frac{NY^{-1}}{a\ssqrt{b}N_1}&\B(\sum_{\substack{f\in \mathcal{B}(Q)\\ \kappa_f\ll Y^{\frac12+\ve}}}\frac{1}{\cosh(\pi \kappa_f)}\B|\sum_{H<h\le 2H}\alpha_h \ssqrt{hq} ~\rho_f(hq)\B|^2\B)^{\frac12}\\
&\times \B(\sum_{\substack{f\in \mathcal{B}(Q)\\ \kappa_f\ll Y^{\frac12+\ve}}}\frac{1}{\cosh(\pi \kappa_f)}\B|\sum_{M^*<m\le 2M^*}\beta_m\lambda(m)\ssqrt{m} ~\rho_f(a_1^{-1},m)\B|^2\B)^{\frac12}.
\end{align*}
We may handle the second spectral sum via Lemma \ref{lemsls}, but for the first spectral sum, an application of Lemma \ref{lemsls} leads to an extra factor of $q$ to the final estimate. To address this issue, we apply the following estimate, established in \cite[Theorems 11.3-11.6]{GWZ25}.
\begin{lemma}\label{thmsls}
Let $Q,s\in\mathbb{N}$, $K, N\ge 1$, and let $(a_n)_{n\in[N,2N]}$ be complex coefficients. Then we have
\[
\sum_{\substack{2\le k\le K\\ k~ \text{even}}}\Gamma(k)\sum_{f\in\mathcal{B}_k(Q)}\B|\sum_{n}a_n\ssqrt{sn}~\rho_f(sn)\B|^2,  \ \ \ \sum_{\mc}\int_{-K}^K\frac1{\cosh(\pi\kappa)}\B|\sum_n a_n\ssqrt{sn}~\rho_{\mc}(\pm sn,\kappa)\B|^2\d\kappa,
\]
\[
\sum_{\substack{f\in\mathcal{B}(Q)\\ |\kappa_f|\le K}}\frac1{\cosh(\pi\kappa_f)}\B|\sum_{n}a_n\ssqrt{sn}~\rho_f(sn)\B|^2
\]
are bounded by
\[
\ll (sQKN)^\ve\B(K+\frac{s^{\frac12}}{Q^{\frac12}}\B)\B(K+\frac{(s,Q)^{\frac12}N}{Q^{\frac12}}\B)N\|a_n\|_\infty.
\]
\end{lemma}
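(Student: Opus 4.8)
This is a twisted version of the Deshouillers--Iwaniec spectral large sieve (Lemma \ref{lemsls}), the twist being the fixed factor $s$ inside the Fourier coefficients $\rho_f(sn)$, $\rho_{\mathfrak c}(\pm sn,\kappa)$; the three displayed quantities are structurally identical, so the plan is to bound all of them simultaneously. It is worth noting first what improvement is being claimed: the naive substitution $m=sn\in[sN,2sN]$ fed into Lemma \ref{lemsls} already gives $\ll(K^2+sN/Q)N\|a_n\|_\infty$, so the content is the replacement of $sN/Q$ by $s^{1/2}(s,Q)^{1/2}N/Q$, a saving of essentially $(s/(s,Q))^{1/2}$, and --- since avoiding the Ramanujan--Petersson conjecture is one of the points of \cite{GWZ25} --- this must be achieved without any pointwise bound on Hecke eigenvalues beyond the trivial one.

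The plan is to run the argument by which Lemma \ref{lemsls} is itself proved. Applying the Kuznetsov formula (Lemma \ref{lemKf}) at the cusp $\infty$ with a Deshouillers--Iwaniec-type test function $\phi$ whose three Bessel transforms $\tilde\phi(k),\hat\phi(\kappa),\breve\phi(\kappa)$ are nonnegative and dominate the spectral window $k,|\kappa|\le K$ (so that each of the three spectral sums is $\le$ the full spectral side), one is reduced to bounding
\[
\sum_{n_1,n_2}a_{n_1}\overline{a_{n_2}}\Big(\delta_{n_1,n_2}\,c_\phi+\sum_{Q\mid c}\frac{S(sn_1,sn_2;c)}{c}\,\phi\Big(\frac{4\pi s\sqrt{n_1n_2}}{c}\Big)\Big).
\]
The diagonal term contributes $\ll K^2\|a_n\|_2^2\ll K^2 N\|a_n\|_\infty$, which is within the claimed bound, so everything reduces to the off-diagonal Kloosterman term.

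Two features of that term must be exploited. First, $s$ divides both arguments, so $(sn_1,sn_2,c)\ge (s,c)\ge (s,Q)$ and the Weil bound $|S(sn_1,sn_2;c)|\ll c^{1/2+\varepsilon}(sn_1,sn_2,c)^{1/2}$ is correspondingly stronger. Second, the decay of $\phi$ localises $c$ to $s\sqrt{n_1n_2}/(1+K^2)\ll c\ll s\sqrt{n_1n_2}\,(sQKN)^\varepsilon$, an interval whose length grows with $s$; combined with the $c^{1/2}$ of the Weil bound this is what produces $s^{1/2}$ rather than $s$. The delicate point --- and the step I expect to be the main obstacle --- is to estimate the sum over $n_1,n_2$ (and $c$) while keeping the $N$-dependence sharp: a crude application of the triangle inequality together with the pointwise Weil bound loses a factor $N^{1/2}$. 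As in Deshouillers--Iwaniec one should instead separate the variables in $\phi(4\pi s\sqrt{n_1n_2}/c)$ by a Mellin transform, open the Kloosterman sum into additive characters, apply Cauchy--Schwarz over the pair $(c,x\bmod c)$ and the Mellin contour, and bound each resulting exponential sum $\sum_n a_n n^{-\xi/2}e(snx/c)$ by the classical additive large sieve inequality --- crucially, this sum has length only $\asymp N$, the factor $s$ entering solely through the frequency $sx/c$ and the range of $c$. Carrying out the summation over $c$ (split into the two ranges $c\asymp s\sqrt{n_1n_2}$ and $c\gg s\sqrt{n_1n_2}$) and over the residues, and tracking the gcd's --- which supply the $(s,Q)^{1/2}$ --- gives the off-diagonal bound $\ll (sQKN)^\varepsilon\big(\tfrac{Ks^{1/2}}{Q^{1/2}}+\tfrac{K(s,Q)^{1/2}N}{Q^{1/2}}+\tfrac{s^{1/2}(s,Q)^{1/2}N}{Q}\big)N\|a_n\|_\infty$, and combining with the diagonal proves the lemma.

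The remaining ingredients are routine: the cross terms $Ks^{1/2}/Q^{1/2}$ and $K(s,Q)^{1/2}N/Q^{1/2}$ come from the endpoints of the $c$-range and from the $K^2$-term of the additive large sieve, no Hecke-eigenvalue bound enters anywhere, and the holomorphic and Eisenstein cases differ from the Maass case only in which Bessel transform must be nonnegative on which range, a single Deshouillers--Iwaniec test function serving for all three. One could alternatively first reduce all three spectral averages to sums over Hecke newforms of the levels $L\mid Q$ via the explicit basis description of \cite{BHM07}, peel the factor $s$ off the argument $sn$ by Hecke multiplicativity, and feed the outcome into the untwisted Lemma \ref{lemsls} at level $L$; this is conceptually cleaner, but because Ramanujan is being avoided one is then forced to control the peeled-off Hecke factors on average rather than pointwise, re-importing a comparable amount of bookkeeping, so I would follow the self-contained Kuznetsov route above.
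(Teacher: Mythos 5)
First, a structural remark: the paper does not prove this lemma at all --- it is imported directly from \cite[Theorems 11.3--11.6]{GWZ25} --- so there is no in-paper argument to compare your sketch against; it must be judged as a standalone proof, and as such it has a genuine gap at precisely the step you yourself flag as the main obstacle.

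Your plan after Kuznetsov (Lemma \ref{lemKf}) is: separate variables in $\phi(4\pi s\sqrt{n_1n_2}/c)$, open $S(sn_1,sn_2;c)$ into additive characters, Cauchy--Schwarz over $(c,x\bmod c)$, and apply the additive large sieve to $\sum_n a_n e(snx/c)$. This chain does not produce the claimed saving. After the Cauchy--Schwarz step one must bound $T_C=\sum_{c\asymp C,\ Q\mid c}c^{-1}\sum^{*}_{x\ (c)}\bigl|\sum_n a_n e(snx/c)\bigr|^2$. Writing $d=(s,c)$, the frequencies $sx/c$ collapse to reduced fractions $y/(c/d)$, each attained $\varphi(c)/\varphi(c/d)\le d$ times, and the additive large sieve (even exploiting $Q/(Q,d)\mid c/d$ in the spacing) gives $T_C\ll (sQN)^{\ve}\,(C/Q+sN/C)\,\|a\|_2^2$. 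In the transition range $C\asymp sN$, where the Kuznetsov weight is of size one, this is $\asymp sN\|a\|_2^2/Q$ --- exactly the naive bound you set out to beat, with no trace of the factor $s^{1/2}(s,Q)^{1/2}$. The reason is structural: Cauchy--Schwarz over $(c,x)$ discards the square-root cancellation of the complete Kloosterman sum in $x$, and it is precisely that cancellation (Weil) which produces $s^{1/2}$; conversely, as you correctly note, keeping Weil and summing over $n_1,n_2$ trivially loses $N^{1/2}$. Each of your two mechanisms thus reproduces one of the two ``wrong'' bounds, and the sketch supplies no device that interpolates them, so the asserted off-diagonal bound $\ll(Ks^{1/2}/Q^{1/2}+K(s,Q)^{1/2}N/Q^{1/2}+s^{1/2}(s,Q)^{1/2}N/Q)N\|a_n\|_\infty$ is exactly the point at which the argument stops being a proof. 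A workable route is the one you mention and then set aside: pass to the explicit bases of \cite{BHM07}, express $\rho_f(sn)$ through Hecke eigenvalues, peel off $\lambda_g(s/e)$ by multiplicativity, and control the peeled factors on average over the spectrum (by a further large-sieve/trace computation) rather than pointwise; this is the type of argument behind \cite[Theorems 11.3--11.6]{GWZ25}. Two smaller points: simultaneous positivity of $\tilde\phi$, $\hat\phi$, $\breve\phi$ for a single test function is not automatic (the holomorphic piece is classically handled separately), and your diagonal estimate $K^2\|a_n\|_2^2\ll K^2N\|a_n\|_\infty$ tacitly assumes $\|a_n\|_\infty\le1$, a normalization already implicit in the statement itself.
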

Applying Lemma \ref{thmsls} to the first spectral sum and Lemma \ref{lemsls} to the second one reduces the Maa\ss\  contribution to
\begin{equation*}
\ll q^\ve\frac{NY^{-1}}{a\ssqrt{b}N_1}\B(H^{\frac12}{M^*}^{\frac12}\left(Y+\frac{q}{a_1b}\r)^{\frac14} \left(Y+\frac{(a_1b,q)H^2}{a_1b}\r)^{\frac14}\left(Y+\frac{M^*}{a_1b}\r)^{\frac12}\B).
\end{equation*}
Taking
\[
 H=\frac{M}q,\ \ \ M^*=  q^\ve\frac{a_1^2N_1^2}{r^2}\frac{bM}{N^2}, \ \ \ Y=\frac{r\ssqrt{MM^*}}{a_1\ssqrt{b}N_1}=\frac{M}{N},
\]
and observing that
$\frac{M^*}{a_1b}\ll Y$ ( since $N_1^2\ll N/a$), we obtain
for the Maa\ss\  contribution
\begin{align*}
&\ll q^\ve\frac{N}{r\ssqrt{q}}\left(Y+\frac{q}{ab}\r)^{\frac14} \left(Y+\frac{(ab,q)H^2}{ab}\r)^{\frac14}Y^{\frac12}\\
&\ll  q^\ve r^{-1}\B(\frac{M}{q^{\frac12}}+\frac{(ab,q)^{\frac14}M^{\frac 54}N^{\frac14}}{(ab)^{\frac14}q} +\frac{M^{\frac 34}N^{\frac14}}{(ab)^{\frac14}q^{\frac14}}+\frac{(ab,q)^{\frac14}MN^{\frac12}}{(ab)^{\frac12}q^{\frac 34}}\B).
\end{align*}

The same bounds are established by identical arguments for the holomorphic and Eisenstein cases.
We therefore conclude with the final estimate
\[
\mS(a_1,r)\ll  q^\ve r^{-1}\B(\frac{M}{q^{\frac12}}+\frac{(ab,q)^{\frac14}M^{\frac 54}N^{\frac14}}{(ab)^{\frac14}q}+\frac{M^{\frac 34}N^{\frac14}}{(ab)^{\frac14}q^{\frac14}} +\frac{(ab,q)^{\frac14}MN^{\frac12}}{(ab)^{\frac12}q^{\frac 34}}\B),
\]
which, substituted into \eqref{eqmAmS} and then into \eqref{eqAA}, establishes the required bound in Theorem \ref{thmAq}.

\section{Unbalanced terms}
In this section, we complete the proof of Theorem \ref{themain} by bounding the unbalanced terms. Since our arguments proceed identically for both cases $bm\equiv an\pmod d$ and $bm\equiv -an\pmod d$ in \eqref{eqA}, we focus on the former one, which is given by
\begin{align}\label{eqEMN1}
E_{M,N}=\frac1{\vp^*(q)}&\sum_{d\mid q}\vp(d)\mu\left(\frac qd\r)\frac{1}{\ssqrt{MN}}\\
&\times\sum_{\substack{bm\equiv an\ppmod d\\ bm\neq an\\(mn,q)=1}}\lambda(m) \tau(n)W\left(\frac mM\right)W\left(\frac {n}{N}\right).\notag
\end{align}
Here, $bM$ and $aN$ are of significantly different sizes. This is analogous to $\mB_{h,k}(M,N)$ in \cite[(9.2)]{GWZ25}, but the presence of Hecke eigenvalues $\lambda(m)$ breaks the symmetry between $m$ and $n$ in the current setting.

When $bM\ll aN$, the Hecke eigenvalues $\lambda(m)$ introduce no essential difference due to the average validity of the Ramanujan--Petersson conjecture. Consequently, an analogous argument to that in \cite[Section 10.1]{GWZ25} yields
\begin{equation}\label{eqEMNub1}
E_{M,N}\ll q^{-\frac1{20}+\ve}a^{\frac3{10}}.
\end{equation}

We now consider the case where $bM\gg aN$. If $q$ is a prime modulus, the most efficient approach is to apply the Voronoi summation formula to both variables $m$ and $n$, effectively swapping $M$ and $N$. This reduces the problem to the $bM\ll aN$ case, albeit with a slightly longer sum (cf. \cite[Section 6.4.2]{BFK+17a}). For composite moduli, however, the coprime condition $(mn,q)=1$ introduces complications in applying the Voronoi formula ( Lemma \ref{lemcV}), potentially leading to very long exponential sums even when $M$ and $N$ are large. Consequently, we derive sharp estimates directly for this case, leveraging Weil's bound and the following specialized estimate for bilinear forms of incomplete Kloosterman sums.
\begin{lemma}[{\cite[Theorem 2.4]{KSWX23}}]\label{lemDS}
Let $q$ be a positive integer and $\alpha_a, \beta_b$ be sequences of complex numbers. For any $A,B\ge1$, the estimate
\begin{equation*}
\sum_{a\le A}\alpha_a\B|\sum_{\substack{b\le B\\ (b,q)=1}} \beta_b e\B(\frac{ca\ol{b}}{q}\B) \B|\ll\vert\bm\alpha\vert_2|\vert \bm\beta\vert_\infty A^{\tfrac12}Bq^\ve\left(A^{-\frac{1}2}B^{-\frac{1}4}q^{\frac{1}4}+A^{-\frac12}+q^{-\frac12}+B^{-\frac{1}2}\r)
\end{equation*}
holds uniformly in $c$ with $(c,q)=1$.
\end{lemma}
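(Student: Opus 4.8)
The plan is to bound the bilinear form by a Cauchy--Schwarz inequality in the variable $a$ — so that $\|\bm\alpha\|_2$ is exactly what appears — and then to reduce the resulting additive problem on modular inverses to the Weil bound by completion of sums, in the spirit of classical estimates for bilinear forms with ``Kloosterman fractions'' $e(ca\ol b/q)$. Since the $\alpha_a$ are complex, I would apply Cauchy--Schwarz directly to $\sum_{a\le A}|\alpha_a|\,\big|\sum_{b}\beta_b\, e(ca\ol b/q)\big|$, bounding it by $\|\bm\alpha\|_2\,S^{1/2}$ with $S=\sum_{a\le A}\big|\sum_{b\le B,\,(b,q)=1}\beta_b\, e(ca\ol b/q)\big|^2$. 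Opening the square in $b$, estimating $|\beta_{b_1}\ol{\beta_{b_2}}|\le\|\bm\beta\|_\infty^2$, and summing the (completely explicit) geometric progression over $a$ gives
\[
S\ \ll\ \|\bm\beta\|_\infty^2\sum_{\substack{b_1,b_2\le B\\(b_1b_2,q)=1}}\min\!\Big(A,\ \big\|{\textstyle\frac{c(\ol{b_1}-\ol{b_2})}{q}}\big\|^{-1}\Big),
\]
where $\|\cdot\|$ is the distance to the nearest integer.

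The diagonal contribution $b_1\equiv b_2\pmod q$ is trivially $\ll\|\bm\beta\|_\infty^2\big(AB+AB^2/q\big)$, and on taking square roots this already yields the $B$, $A^{1/2}B^{1/2}$ and $A^{1/2}Bq^{-1/2}$ terms of the claim. The whole weight of the lemma thus rests on the off-diagonal sum
\[
\Sigma:=\sum_{\substack{b_1,b_2\le B,\ (b_1b_2,q)=1\\ b_1\not\equiv b_2\,(q)}}\min\!\Big(A,\ \big\|{\textstyle\frac{c(\ol{b_1}-\ol{b_2})}{q}}\big\|^{-1}\Big),
\]
for which I would aim at $\Sigma\ll q^{\ve}\big(B^2+B^{3/2}q^{1/2}\big)$, uniformly in $A$ and in $c$ with $(c,q)=1$; feeding this back into $S$ and taking square roots produces exactly the remaining $B^{3/4}q^{1/4}$ term and finishes the proof.

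To treat $\Sigma$ I would decompose dyadically in the size $V$ of $\|c(\ol{b_1}-\ol{b_2})/q\|^{-1}$, reducing to counting pairs $b_1,b_2\le B$ with $c(\ol{b_1}-\ol{b_2})$ confined to a residue window of width $\asymp q/V$; using $\ol{b_1}-\ol{b_2}\equiv\ol{b_1b_2}\,(b_2-b_1)\pmod q$ this is the same as counting solutions of the bilinear congruence $c(b_2-b_1)\equiv r\,b_1b_2\pmod q$ with $|r|\le q/V$. Completing the congruence by additive characters modulo $q$ writes the count as $\tfrac1q\sum_{h}\widehat W(h)\big|\sum_{b\le B,(b,q)=1}e(h\ol b/q)\big|^2$ for a suitable multiplier $\widehat W$; the frequency $h=0$ supplies the expected main term of size $\asymp B^2/V$, whose total contribution after summing over $V$ is $\ll q^{\ve}B^2$, while for $h\ne0$ one completes the inner sums over $b$ a second time into genuine Kloosterman sums $S(m,n;q)$ and invokes $|S(m,n;q)|\le\tau(q)(m,n,q)^{1/2}q^{1/2}$; the factor $q^{1/2}$ surfaces here and, after summing over $h$ and then over $V$, delivers the $B^{3/2}q^{1/2}$ term.

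The step I expect to be the main obstacle is precisely this last estimate: producing an $A$-independent bound for $\Sigma$ that is genuinely uniform over \emph{all} moduli $q$. The arithmetic content is an equidistribution statement for the differences $\ol{b_1}-\ol{b_2}$ of modular inverses with $b_1,b_2$ in short intervals — it must rule out the a priori possibility that such differences cluster in a thin set — and one has to extract the stated power of $q$ robustly rather than optimally. For non-squarefree $q$ the second completion requires careful bookkeeping with the general Weil bound and the attendant divisor sums over the frequencies $h$; these uniformity technicalities, rather than any single deep new idea, are what make the estimate delicate, and a careless treatment easily introduces a spurious factor of $q^{1/4}$ or an unwanted dependence on $A$.
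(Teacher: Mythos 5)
First, a remark on context: the paper offers no proof of this lemma at all --- it is quoted verbatim as Theorem 2.4 of Kerr--Shparlinski--Wu--Xi \cite{KSWX23} --- so your attempt is in effect a reproof of that external theorem. Your opening reduction is sound and standard: Cauchy--Schwarz in $a$, opening the square, and bounding the linear exponential sum by $\min\left(A,\left\|c(\overline{b_1}-\overline{b_2})/q\right\|^{-1}\right)$ is the right first step; the diagonal $b_1\equiv b_2\pmod q$ accounts for the terms $A^{1/2}B^{1/2}$ and $A^{1/2}Bq^{-1/2}$ (the term $B$ actually comes from the off-diagonal main term, a minor slip), and along this route the lemma is indeed equivalent to the $A$-uniform off-diagonal bound $\Sigma\ll q^{\varepsilon}\left(B^2+B^{3/2}q^{1/2}\right)$ that you isolate.

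The genuine gap is that the tool you propose for $\Sigma$ --- completion by additive characters followed by Weil's bound --- cannot produce that estimate, and this is not a matter of ``uniformity technicalities'' for composite $q$: it is the entire content of the theorem. Concretely, at a dyadic scale $V$ you must count pairs with $c(\overline{b_1}-\overline{b_2})$ lying in a window of length $R=q/V$; writing this count as $\frac1q\sum_{h\bmod q}D_R(h)|T(h)|^2$ with $T(h)=\sum_{b\le B,(b,q)=1}e(hc\overline{b}/q)$ and $D_R(h)\ll\min(R,q/|h|)$, the frequency $h=0$ gives your main term $\asymp B^2/V$, but for $h\ne0$ the completed Weil bound $|T(h)|^2\ll q^{1+\varepsilon}(h,q)$ only yields an error $\ll q^{\varepsilon}\sum_{h\ne 0}(h,q)\min(R,q/|h|)\ll q^{1+\varepsilon}$, hence a contribution $\ll Vq^{1+\varepsilon}$ to $\Sigma$ at scale $V$, i.e.\ up to $q^{2+\varepsilon}$ after summing over $V\le q$ (or $Aq^{1+\varepsilon}$ if you keep the $\min$ with $A$) --- far larger than $B^{3/2}q^{1/2}$ whenever $B\ll q$. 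The only alternative within your framework is Parseval in $h$, $\sum_h|T(h)|^2\ll qB+B^2$, which gives $\Sigma\ll q^{\varepsilon}(B^2+qB)$ and hence only the term $B^{1/2}q^{1/2}$ in the final bound; that is exactly what a trivial Cauchy--Schwarz-plus-completion argument already gives without any of this work, and it is weaker than the claimed $B^{3/4}q^{1/4}$ throughout the relevant range $B<q$. In short, Weil-type completion cannot detect equidistribution of $\overline{b_1}-\overline{b_2}$ in windows of length below $q^{1/2}$, so the possible clustering you yourself flag as ``the main obstacle'' is not ruled out by your method; overcoming it requires a genuinely nontrivial counting/energy input for modular inverses (points of modular hyperbolas in boxes, uniformly in composite moduli), which is precisely what the proof in \cite{KSWX23} supplies and what is missing from the proposal.
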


We detect the congruence condition in \eqref{eqEMN1} using primitive additive characters modulo $c\mid d$ and apply the Voronoi formula (Lemma \ref{lemcV}) to the $m$-sum. This yields
\begin{align*}
E_{M,N}=\frac1{\vp^*(q)}&\sum_{c\mid d\mid q}\frac{\vp(d)}{d}\mu\left(\frac qd\r)\frac{1}{\ssqrt{MN}}
\sum_{\delta}\varpi_{\lambda}(\delta,q)\frac{M}{\delta c'}\\
&\times\sum_{(n,q)=1} \sum_{m}\lambda(m)
\mathring{W}_\pm\left(\frac {mM}{\delta c'^2}\right) W\left(\frac {n}{N}\right)S(\ol{a\delta'}m,\pm bn; c')\notag
\end{align*}
with $\delta'=\delta/(\delta,c)$ and $c'=c/(\delta,c)$. Bounding $\varpi_{\lambda}(\delta,q)$ via the estimate
\begin{equation*}
\varpi_{\lambda}(\delta,q)\ll (\delta,q)^{\theta_f}\le ((\delta,c)q_c)^{\theta_f}= \left(cq_c/c'\r)^{\theta_f}\le (q/c')^{\theta_f}
\end{equation*}
and applying Weil's bound to the Kloosterman sum, we evaluate all the sums trivially, obtaining
\begin{equation}\label{eqEMNWeil}
E_{M,N}\ll\frac{q^\ve}{\vp^*(q)}\sum_{c'\mid q}\left(\frac{q}{c'}\r)^{\theta_f}\frac{c'^{\frac32}N^{\frac12}}{M^{\frac12}}
\ll q^{\ve}\left(\frac {qN}M\r)^{\frac12},
\end{equation}
valid for $\theta_f<1/2$.

We now expand the divisor function $\tau(n)$ by writing $n=n_1n_2$, where $n_1\asymp N_1, n_2\asymp N_2$, $N_1N_2\asymp N$, and $N_1\le N_2$.
After removing $(n_2,q_d)=1$ using the M\"{o}bius function, we apply the Poisson summation formula to the $n_2$-sum. An identical argument to that in \cite[Section 9]{GWZ25} then efficiently reduces $E_{M,N}$ to the following structured form
\begin{align}\label{eqEpm}
E_{M,N}\ll& \frac{q^\ve}{\vp^*(q)}\sum_{d\mid q}\sum_{g\mid q_d}\frac{\vp(d)}{dg}\frac{N_2}{\sqrt{MN}}\\
&\times\B|\sum_{h\neq0}\sum_{(m,q)=1}\sum_{(n_1,q)=1} \lambda(m) e\left(\frac{bhm\ol{agn_1}}{d}\r) W\left(\frac {m}M\right) W\left(\frac {n_1}{N_1}\right)\wh{W}\left(\frac {hN_2}{gd}\right)\B|,\notag
\end{align}
which is an analogy of \cite[(9.3)]{GWZ25} but with $\tau(m)$ replaced by $\lambda(m)$.

We cannot directly apply Lemma \ref{lemDS} to \eqref{eqEpm} because the $m$-sum is long. To shorten the sum, we apply the Voronoi summation formula (Lemma \ref{lemcV}) to the variable $m$, which yields
\begin{align*}
E_{M,N}\ll&\frac{q^\ve}{\vp^*(q)}\sum_{d\mid q}\frac{N_2}{\sqrt{MN}}\sum_{\delta}|\varpi_{\lambda}(\delta,q)|\frac{M}{\delta d'}\\
&\times\B|\sum_{(h,q)=1}\sum_{(n_1,q)=1} \sum_{m}\lambda(m)e\B(\pm\frac{amn_1\ol{\delta'bh}}{d'}\B)
\mathring{W}_\pm\left(\frac {m}{M^*}\right) W\left(\frac {n_1}{N_1}\right)\wh{W}\left(\frac {h}{H}\right)\B|,\notag
\end{align*}
where
\[
\delta'=\delta/(\delta,d),\quad d'=d/(\delta,d),\quad 1\ll H\ll d/N_2,\quad 1\ll M^*\ll \delta d'^2/M.
 \]
Here, we have removed the $g$-sum and added the condition $(h,q)=1$ to the $h$-sum at no additional cost.
Setting $l=mn_1$ and $k=h$, we apply Lemma \ref{lemDS} with
 \[
 L=\frac{\delta d'^{2+\ve}N_1}{M}\gg \frac{dd'N_1}{M}\quad \text{and}\quad K=H,
 \]
obtaining
\begin{align}\label{eqEMN+W}
E_{M,N}\ll& \frac{q^\ve}{\vp^*(q)}\sum_{d'\mid d\mid q}\left(\frac{q}{d'}\r)^{\theta_f}\frac{d'HN^{\frac12}}{M^{\frac12}}
\B(\frac{M^{\frac12}}{d'^{\frac34}H^{\frac14}N_1^{\frac12}}+\frac{M^{\frac12}}{(dd'N_1)^{\frac12}}+\frac1{d'^{\frac12}}+\frac1{H^{\frac12}}\B)\notag\\
&\ll q^\ve N_2^{-\frac14}+q^{\ve}N_2^{-\frac12}\left(\frac {qN}M\r)^{\frac12},
\end{align}
valid for $\theta_f<1/4$.

Let $\eta=\frac{1-2\theta_f}{12+12\theta_f}$ (to be specified later) and set
\[
a=q^\alpha,\quad b=q^\beta,\quad M=q^\mu,\quad  N=q^\nu, \quad N_2=q^{\nu_2}.
\]
By \eqref{eqtbB}, \eqref{eqbA}, and \eqref{eqEMNWeil}, it suffices to prove $E_{M,N}\ll q^{-\eta+\ve}$
for the parameters satisfying
\begin{align}\label{eqrange}
2-2\eta\le (1+2\theta_f)\mu+\nu,&\quad 1-2\beta-4\eta\le \mu-\nu\le 1+2\eta,\quad \nu_2\ge\tfrac12 \nu.
\end{align}
The estimate \eqref{eqEMN+W} then yields
\[
E_{M,N}\ll q^{-\frac14\nu_2+\ve}+q^{-\frac12(\nu_2+\mu-\nu-1)+\ve},
\]
while
\begin{align*}
-\tfrac14\nu_2&\le -\frac{((1+2\theta_f)u+v)-(1+2\theta_f)(u-v)}{16+16\theta_f}\\
&\le -\frac{(2-2\eta)-(1+2\theta_f)(1+2\eta)}{16+16\theta_f}\le -\eta
\end{align*}
for $\eta\le\frac{1-2\theta_f}{12+12\theta_f}$, and
\begin{align*}
-\tfrac12(\nu_2+\mu-\nu-1)&\le -\frac{((1+2\theta_f)u+v)-(1+2\theta_f)(u-v)}{8+8\theta_f} -\tfrac12(\mu-\nu-1)\\
&\le -\frac{2-2\eta-(1+2\theta_f)(1-2\beta-4\eta)}{8+8\theta_f}+\beta+2\eta\le -\eta
\end{align*}
for $0<\eta\le \frac{1-2\theta_f-(6+4\theta_f)\beta }{22+16\theta_f}$, subject to the condition given in \eqref{eqrange}. This yields
\[
E_{M,N}\ll q^{-\frac{1-2\theta_f}{22+16\theta_f}+\ve}b^{\frac{3+2\theta_f }{11+8\theta_f}},
\]
which, combining with \eqref{eqEMNub1}, completes the proof of Theorem \ref{themain}.

\end{document}